\numberwithin{equation}{section}
\numberwithin{figure}{section}
\newtheorem{theorem}{Theorem}[section]
\newtheorem{assumption}[theorem]{Assumption}
\newtheorem{proposition}[theorem]{Proposition}
\newtheorem{lemma}[theorem]{Lemma}
\theoremstyle{definition}
\newtheorem{definition}[theorem]{Definition}
\newtheorem{remark}[theorem]{Remark}
\newcommand*{\N}{\ensuremath{\mathbb{N}}}
\newcommand*{\Z}{\ensuremath{\mathbb{Z}}}
\newcommand*{\R}{\ensuremath{\mathbb{R}}}
\newcommand*{\Zd}{\ensuremath{\mathbb{Z}^d}}
\newcommand{\eps}{\varepsilon}
\renewcommand*{\tilde}{\widetilde}
\newcommand{\ep}{\eps}
\newcommand{\E}{\mathbb{E}}
\DeclareSymbolFont{boldoperators}{OT1}{cmr}{bx}{n}
\edef\bar{\unexpanded{\protect\mathaccentV{bar}}\number\symboldoperators16}
\renewcommand{\a}{\mathbf{a}}
\definecolor{labelkey}{rgb}{0,0,1}
\newcommand{\indc}{{\boldsymbol{1}}}
\newcommand{\var}{\mathrm{Var}}
\renewcommand{\P}{\mathbb{P}}
\newcommand{\addperiod}[1]{#1.}
\titleformat*{\subsection}{\bfseries}
\titleformat{\subsubsection}[runin]
  {\normalfont\bfseries}
  {\thesubsubsection.}
  {0.5em}
  {\addperiod}
\titleformat*{\subsubsection}{\bfseries}
\titleformat*{\paragraph}{\bfseries}
\titleformat*{\subparagraph}{\large\bfseries}
\title{Upper bounds on the fluctuations for a class of degenerate convex $\nabla \phi$-interface models}
\author{Paul Dario
\thanks{CNRS and LAMA, Universit\'e Paris-Est Cr\'eteil, Cr\'eteil, France.
{\footnotesize paul.dario@u-pec.fr.}
}
}
\date{ }
\begin{document}

\maketitle

\begin{abstract}
We derive upper bounds on the fluctuations of a class of random surfaces of the $\nabla \phi$-type with convex interaction potentials. The Brascamp-Lieb concentration inequality provides an upper bound on these fluctuations for uniformly convex potentials. We extend these results to twice continuously differentiable convex potentials whose second derivative grows asymptotically like a polynomial and may vanish on an (arbitrarily large) interval. Specifically, we prove that, when the underlying graph is the $d$-dimensional torus of side length $L$, the variance of the height is smaller than $C \ln L$ in two dimensions and remains bounded in dimension $d \geq 3$.

The proof makes use of the Helffer-Sj\"{o}strand representation formula (originally introduced by Helffer and Sj\"{o}strand (1994) and used by Naddaf and Spencer (1997) and Giacomin, Olla Spohn (2001) to identify the scaling limit of the model), the anchored Nash inequality (and the corresponding on-diagonal heat kernel upper bound) established by Mourrat and Otto (2016) and Efron's monotonicity theorem for log-concave measures (Efron (1965)).
\end{abstract}

\setcounter{tocdepth}{1}
\tableofcontents

\section{Introduction}

The aim of this paper is to obtain fluctuations upper bounds for a class of random surfaces subject to $\nabla \phi$ type interaction arising in statistical physics. These models are used to model phase separation in $\R^{d+1}$, and are defined as follows. For any fixed dimension $d \geq 2$ and integer $L \geq 1$, we let $\mathbb{T}_L := (\Z/ (2L+1)\Z)^d$ be the $d$-dimensional torus of side length $2L+1$. We endow the edges of the torus with an orientation, let $E \left( \mathbb{T}_L \right)$ be the set of positively oriented edges of $\mathbb{T}_L$, and let $V$ be a potential, i.e., a measurable function $V : \R \to \R$ satisfying suitable properties. The random surface on $\mathbb{T}_L$ with potential $V$ is then the probability measure $\mu_{\mathbb{T}_L}$ on the set of functions $\Omega^\circ_{ \mathbb{T}_L} := \left\{ \phi : \mathbb{T}_L \to \R \, : \, \sum_{x \in \mathbb{T}_L} \phi(x) = 0 \right\}$ defined by
\begin{equation} \label{def.muTL}
    \mu_{\mathbb{T}_L}(d \phi) := \frac{1}{Z_{L}} \exp \left( - \sum_{e \in E \left( \mathbb{T}_L \right) } V\left( \nabla \phi(e) \right)\right) d \phi,
\end{equation}
where the discrete gradient is defined by $\nabla \phi(e) := \phi(y) - \phi(x)$ for the positively oriented edge $e = (x , y)$, $d\phi$ denotes the Lebesgue measure on the space $\Omega^\circ_{ \mathbb{T}_L}$ (equipped with the $L^2$ scalar product) and the normalization constant (or partition function)
\begin{equation*}
    Z_{L} := \int_{\Omega^\circ_{ \mathbb{T}_L}} \exp \left( - \sum_{e \in E \left( \mathbb{T}_L \right) } V\left( \nabla \phi(e) \right)\right) d \phi
\end{equation*}
is chosen so that $\mu_{\mathbb{T}_L}$ is a probability distribution. The model~\eqref{def.muTL} is known as the $\nabla \phi$ interface model or discrete Ginzburg-Landau model and has received considerable attention since its introduction in the seminal work of Brascamp, Lieb and Lebowitz~\cite{BLL75} (see~\cite{F05, V06} and Section~\ref{sectiondiscadback}). A natural property to investigate on the model is the question of its \emph{localization} or \emph{delocalization}, that is, to establish whether the variance $\var_{\mu_{\mathbb{T}_L}} \left[ \phi(0) \right]$ remains bounded or diverges to infinity as $L$ tends to infinity. Explicit computations available in the case $V(x) = x^2$, i.e., in the case of the discrete Gaussian free field, show that this variance diverges as the size $L$ of the torus tends to infinity in two dimensions (the random surface is said to be \emph{delocalized}, and the divergence is in fact logarithmic in $L$), and remains bounded uniformly in $L$ in higher dimensions (the random surface is then said to be \emph{localized}). Brascamp, Lieb and Lebowitz~\cite{BLL75} conjectured that this result should remain valid for any potential $V$ satisfying $\int_\R \exp \left( - p V(x) \right) \, dx < \infty$ for all $p > 0$ and obtained a sharp (up to multiplicative constant) upper bound on the fluctuations of the random surface, using the celebrated Brascamp-Lieb concentration inequality, for twice-continuously differentiable potentials satisfying $\inf V'' > 0$ and for a class of convex potential with quadratic growth. Since the results of~\cite{BLL75}, the localization and delocalization upper bounds have been extended to various settings including:
\begin{itemize}
    \item Non-convex potentials arising as a perturbation of uniformly convex potentials by Cotar, Deuschel and M\"{u}ller~\cite{CDM09, CD12};
    \item Non-convex potentials which are a perturbation of uniformly convex potentials and are amenable to renormalization group analysis by Adams, Buchholz, Koteck\'{y}, M\"{u}ller~\cite{AKM16, adams2019cauchy}, Hilger~\cite{hilger2016scaling, hilger2020scaling, hilger2020decay}, Adams, Koller~\cite{adams2023hessian} and Bauerschmidt, Park and Rodriguez~\cite{BPR1, BPR2} (for the discrete Gaussian model in the latter case);
    \item Potentials which can be written as a mixture of Gaussians by Biskup, Koteck\'{y}~\cite{BK07}, Biskup, Spohn~\cite{BS11}, Brydges, Spencer~\cite{brydges2012fluctuation} and Ye~\cite{ye2019models};
    \item Convex potentials satisfying that the set $\left\{ x \in \R \, : \, V''(x) = 0 \right\}$ has Lebesgue measure $0$ by Peled and Magazinov~\cite{magazinov2020concentration};
    \item The potential $V(x) = |x|$ using the infra-red bound of Bricmont, Fontaine and Lebowitz~\cite{bricmont1982surface} (this case is also covered in~\cite{brydges2012fluctuation}).
\end{itemize}
Lower bounds on the fluctuations of the random surface have been established in a much more general setting, and Mermin-Wagner type arguments have been used successfully to prove logarithmic lower bounds for the variance of the height in two dimensions for a large class of potentials including all the twice-continuously differentiable $V$~\cite{BLL75, dobrushin1980nonexistence, frohlich1981absence, ioffe20022d}, as well as for models with hard-core constraint~\cite{MilosPeled2015}. Section~\ref{sectiondiscadback} discusses additional results beyond the questions of localization and delocalization (such as hydrodynamic limit, scaling limit, strict convexity of the surface tension, decay of covariances, large deviations) which have been proved for this model.

In this article, we are interested the class of convex potentials whose second derivative grows like a polynomial, formally defined in Assumption~\ref{assumption1} below.

\begin{assumption} \label{assumption1}
We assume that $V : \R \to \R$ is a potential satisfying the assumptions:
\begin{itemize}
    \item[(i)] \textit{Regularity and convexity:} we assume that $V$ is twice-continuously differentiable and convex;
    \item[(ii)] \textit{Growth of the second derivative:} we assume that the second derivative of $V$ satisfies a power-law growth condition: there exist an exponent $r > 2$ and two constants $c_+ , c_- \in (0 , \infty)$ such that 
    \begin{equation*}
        0 < c_- \leq \liminf_{|x| \to \infty} \frac{V''(x)}{|x|^{r-2}} \leq \limsup_{|x| \to \infty} \frac{V''(x)}{|x|^{r-2}} \leq c_+ < \infty.
    \end{equation*}
\end{itemize}
\end{assumption}
The main theorem of this paper establishes that the variance of the random surface grows at most logarithmically fast in two dimensions and remains bounded in dimensions $3$ and higher for the class of potentials satisfying Assumption~\ref{assumption1}.

\begin{theorem}[Localization and Delocalization] \label{main.theorem}
Under Assumption~\ref{assumption1}, there exists a constant $C := C(d , V) < \infty$ such that, for any $L \geq 2$,
\begin{equation*}
    \var_{\mathbb{T}_L} \left[\phi(0) \right] \leq 
    \left\{ \begin{aligned}
        C \ln L & ~~ \mbox{if} ~ d=2, \\
        C & ~~\mbox{if} ~ d \geq 3.
    \end{aligned} \right.
\end{equation*}
\end{theorem}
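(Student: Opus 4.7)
The strategy rests on the three tools announced in the abstract: Helffer-Sj\"{o}strand, anchored Nash, and Efron.

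\textbf{Step 1: Helffer-Sj\"{o}strand representation.} I would begin by applying the Helffer-Sj\"{o}strand formula to recast the variance as an integrated on-diagonal heat kernel. Setting $a_e(\phi) := V''(\nabla \phi(e))$ for the (random, degenerate) conductances and letting $p_t^{a(\phi)}$ denote the transition kernel of the continuous-time random walk on $\mathbb{T}_L$ in this environment, the representation takes the form
\begin{equation*}
\var_{\mu_{\mathbb{T}_L}}[\phi(0)] = \E_{\mu_{\mathbb{T}_L}}\!\left[\int_0^\infty \left( p_t^{a(\phi)}(0,0) - \frac{1}{|\mathbb{T}_L|} \right) dt\right].
\end{equation*}
This reduces the original problem to an annealed on-diagonal heat kernel estimate for a random walk in a random environment built from gradients of the Langevin field.

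\textbf{Step 2: On-diagonal heat kernel via the anchored Nash inequality.} I would then invoke the anchored Nash inequality of Mourrat and Otto, which under suitable moment bounds on $a$ and $a^{-1}$ yields the Gaussian-type on-diagonal bound $p_t^a(0,0) \leq C t^{-d/2}$, valid on scales $1 \leq t \lesssim L^2$; beyond this scale the spectral gap of the torus forces the kernel to become comparable to $1/|\mathbb{T}_L|$ with exponentially small corrections. Integrating such a bound recovers exactly the target behavior: $\int_1^{L^2} t^{-1}\, dt = O(\ln L)$ in $d=2$ and $\int_1^\infty t^{-d/2}\, dt = O(1)$ in $d\geq 3$. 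The upper moments of $a_e(\phi)$ needed here are straightforward: the log-concavity of $\mu_{\mathbb{T}_L}$ together with the polynomial upper growth of $V''$ and the classical Brascamp-Lieb concentration inequality give $\E_{\mu_{\mathbb{T}_L}}[|\nabla \phi(e)|^p] < \infty$ for every $p \geq 1$, uniformly in $L$.

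\textbf{Step 3 (main obstacle): Handling the degeneracy via Efron.} The central difficulty is that Assumption~\ref{assumption1} permits $V''$ to vanish on an arbitrarily large interval, so $\P(a_e(\phi) = 0) > 0$ and \emph{no} moment of $a_e^{-1}$ is finite, preventing a direct application of Mourrat-Otto. This is where Efron's monotonicity theorem enters. Because $\mu_{\mathbb{T}_L}$ is log-concave, Efron's theorem allows one to quantitatively compare the original (degenerate) conductance field with a non-degenerate regularization, for instance by working with the smoothed potential $V_\delta(x) := V(x) + \tfrac{\delta}{2} x^2$ (which satisfies $V_\delta'' \geq \delta$) and proving that the heat kernel bounds obtained from Mourrat-Otto for $V_\delta$ remain \emph{uniform} as $\delta \to 0$. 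Efron's inequality would be used to control conditional expectations of monotone functionals of $\phi$ in a way that ensures the "annealed" heat kernel behaves as though the environment had tame bounds on $a^{-1}$: the point is that although individual edges may be degenerate, on log-concave measures large clusters of simultaneously-degenerate gradients are improbable, and Efron is the natural tool to quantify this.

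\textbf{Step 4: Conclusion.} Once the annealed bound $\E_{\mu_{\mathbb{T}_L}}[p_t^{a(\phi)}(0,0)] \leq C t^{-d/2}$ is established uniformly in $L$ on the relevant time window, substituting into the Helffer-Sj\"{o}strand representation and performing the dimensional integration yields the claimed estimates. The principal obstacle I foresee is Step 3, in which the anchored Nash inequality must be carried through a genuinely degenerate conductance field; the log-concavity of $\mu_{\mathbb{T}_L}$, exploited via Efron's theorem, is precisely the structural input that replaces the missing pointwise ellipticity of $V''$.
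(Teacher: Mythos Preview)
Your overall architecture (Helffer--Sj\"ostrand $\Rightarrow$ on-diagonal heat-kernel decay via anchored Nash $\Rightarrow$ integrate) matches the paper, but Step~3 misidentifies both the role of Efron and the mechanism that defeats the degeneracy. First, two smaller points: the Helffer--Sj\"ostrand environment is \emph{dynamic}, $\a(t,e)=V''(\nabla\phi_L(t,e))$ built from the Langevin trajectory, not the static $a_e(\phi)$ you wrote; and you cannot invoke Brascamp--Lieb for gradient moments, since that inequality requires $\inf V''>0$, which is exactly what fails here. In the paper, Efron's theorem is used only once, in Proposition~3.1, to prove the sharp tail $\P[|\nabla\phi(e)|>K]\le C\exp(-cK^r)$ by comparing $\nabla\phi(e)$ with an auxiliary variable $Y$ whose law is proportional to $\exp(-V/2)$; there is no regularization $V_\delta$ and no $\delta\to 0$ limit.

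The genuine gap is that your proposal contains no substitute for the paper's central idea: since $\P[\a(t,e)=0]>0$, no inverse moment of $\a$ is finite, and the paper does \emph{not} repair this by an annealed/regularization argument. Instead it exploits the time direction. One differentiates the Langevin dynamic with respect to the Brownian increments $X_n(x)=B_{(n+1)/N}(x)-B_{n/N}(x)$ and shows that $\partial\nabla\phi_L/\partial X_n$ is bounded below by a positive constant (on a good event controlled via the tail bound above). This yields a fluctuation estimate (Proposition~3.4): $\P[\forall t\in[0,T],\,|\nabla\phi_L(t,e)|\le R_V]\le C\exp(-c(\ln T)^{r/(r-2)})$, i.e.\ the dynamic cannot linger in the zone where $V''$ vanishes. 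Consequently the \emph{moderated} environment $w(t,e)^2=\int_t^\infty k_{s-t}\,\frac{\a(s,e)\wedge 1}{(\text{local time-average of }\a)}\,ds$ satisfies $\E[w^{-q}]<\infty$ for all $q$ (Proposition~4.4), and the Mourrat--Otto machinery is run with $w$ in place of $\a$. Your Step~3, as written, offers no replacement for this fluctuation-in-time mechanism, and a purely spatial log-concavity argument about ``clusters of degenerate edges'' would not suffice: a single edge can be degenerate with positive probability, which already kills $\E[\a^{-1}]$, and what saves the day is that it is not degenerate \emph{for long}.
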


\begin{remark}
    The convexity of the potential $V$ implies that the measure~\eqref{def.muTL} is log-concave. Since log-concavity is a property which is closed under marginalization (by the Pr\'{e}kopa-Leindler inequality~\cite{prekopa1971logarithmic, prekopa1973logarithmic, leindler1972certain}), this implies that the distribution of the height $\phi(0)$ is also log-concave. Since the tail of a log-concave distribution decays at least exponentially fast on the scale of its standard deviation, the result of Theorem~\ref{main.theorem} can be extended from a bound on the variance to a bound on exponential moments.
\end{remark}

\begin{remark}
        It is plausible that the techniques developed in this article can be further extended to obtain more precise properties on the behavior of the model (such as its hydrodynamic and scaling limits). These questions are further discussed in Section~\ref{sectionfurthercomm} below.
\end{remark}

\subsection{Outline of the proof}

In order to highlight the main ideas and techniques used to prove Theorem~\ref{main.theorem}, we present below a sketch of the argument for potentials satisfying the following assumptions: we assume that $V : \R \to \R$ is twice-continuously differentiable, convex and that there exists $c_1 \in (0 , 1)$ such that
\begin{equation} \label{eq:1329}
     0 < c_1 \leq \liminf_{|x| \to \infty} V''(x) \hspace{5mm}\mbox{and} \hspace{5mm} \sup_{x \in \R} V''(x) \leq 1.
\end{equation}
Note that this is more restrictive than Assumption~\ref{assumption1}; the full argument will require some notational and technical adjustments.

\subsubsection{The Helffer-Sj\"{o}strand representation formula}

One of the main tools used to prove fluctuation upper bounds is the Helffer-Sj\"{o}strand representation formula, initially introduced by Helffer and Sj\"{o}strand~\cite{helffer1994correlation} and used by Naddaf and Spencer~\cite{NS} and Giacomin, Olla and Spohn~\cite{GOS} in order to identify the scaling limit of the model, and by Deuschel, Giacomin and Ioffe~\cite{DGI00} to establish a large deviation principle for the model (among other results, see Section~\ref{sectiondiscadback}). In the setting of this paper, the formula reads as follows. Let $\phi_L$ be the stationary Langevin dynamic associated with the Gibbs measure $\mu_{\mathbb{T}_L}$, i.e., the solution of the system of stochastic differential equations
\begin{equation} \label{Langevinsketchofproof}
\left\{ \begin{aligned}
d \phi_L(t , x) &= \nabla \cdot V' \left( \nabla \phi_L \right)(t , x) + \sqrt{2} dB_t(x) &~~\mbox{for} &~~(t , x) \in (0 , \infty) \times \mathbb{T}_L, \\
\phi_L(0 , x) &= \phi(x) & \mbox{for} &~~x \in \mathbb{T}_L,
\end{aligned}
\right.
\end{equation}
where $\left\{ B_t(x) \, : \, t \geq 0 , x \in \mathbb{T}_L \right\}$ is collection of independent Brownian motions, and the initial condition $\phi$ is sampled according to $\mu_{\mathbb{T}_L}$ independently of the Brownian motions. Then, one has the identity
\begin{equation} \label{eq:20150901}
    \var_{\mu_{\mathbb{T}_L}} \left[\phi(0) \right] = \E \left[ \int_0^\infty P_\a (t , 0) \, dt \right],
\end{equation}
where $P_\a$ is the heat kernel associated with the discrete parabolic equation (using the notation of Section~\ref{section2})
\begin{equation*}
    \left\{ \begin{aligned}
     \partial_t P_\a(t , x) - \nabla \cdot \a \nabla P_\a (t , x) &= 0 &~~\mbox{for} &~~ (t , x) \in (0 , \infty) \times \mathbb{T}_L, \\
    P_\a (0 , x) & = \delta_0(x) - \frac{1}{\left| \mathbb{T}_L \right|} & ~~\mbox{for} &~~ x \in \mathbb{T}_L,
    \end{aligned} \right.
\end{equation*}
with the environment $\a(t , e) := V''(\nabla \phi_L(t , e))$.

As has been observed in~\cite{helffer1994correlation, NS, DGI00, GOS}, the Helffer-Sj\"{o}strand representation formula can be combined with tools of elliptic regularity, in the form of on-diagonal heat kernel estimates, to prove upper bounds on the fluctuations on the random surface. For instance, if the potential $V$ is assumed to be uniformly convex, i.e., if $0 < c_- \leq V'' \leq 1$, then one has the bound $c_- \leq \a(t , e) \leq 1$. In this setting the parabolic equation arising from the Helffer-Sj\"{o}strand representation formula is uniformly elliptic, and this property is sufficient to prove the following on-diagonal upper bound on the heat kernel
\begin{equation} \label{eq:Panashsketchproof}
    P_\a (t , 0) \leq \frac{C}{(1 + t)^{\frac d2}} \exp \left( - \frac{t}{CL^2} \right).
\end{equation}
Integrating the bound~\eqref{eq:Panashsketchproof} over the times $t \in [0 , \infty)$ and using the identity~\eqref{eq:20150901} yields the variance estimate stated in Theorem~\ref{main.theorem}.

The proof of Theorem~\ref{main.theorem} follows the strategy described in the previous paragraph, but some additional arguments are required to take into account that the second derivative of a potential $V$ satisfying~\eqref{eq:1329} can vanish.

\subsubsection{The on-diagonal heat kernel upper bound in degenerate environment of Mourrat and Otto}

Under Assumption~\eqref{eq:1329}, the upper bound on the fluctuations of the random surface can be obtained by first extending the on-diagonal upper bound for the heat kernel~\eqref{eq:Panashsketchproof} to \emph{degenerate environments}, i.e., environments $\a : (0 , \infty) \times E \left( \mathbb{T}_L \right) \to [0 , 1]$ which may vanish (or take values arbitrarily close to $0$). This question has received significant attention from the mathematical community (see Section~\ref{sectiondiscadback}), and, in this article, we rely on the approach of Mourrat and Otto~\cite{MO16} and of Biskup and Rodriguez~\cite{biskup2018limit} who respectively proved an on-diagonal upper bound for the heat kernel  and a quenched invariance principle for a large class of \emph{dynamic degenerate environments}. The exact result of the former (stated in infinite volume) can be found in~\cite[Theorem 4.2]{MO16}. Their proof could be adapted to the setting considered here, and would show the following result. Given an environment $\a : (0 , \infty) \times E\left( \mathbb{T}_L \right) \to [0 , 1]$, if we define the \emph{moderated environment} by
\begin{equation} \label{eq:modeenvtwdefsketxh}
    w(t , e) := \int_t^\infty \frac{\a (s , e)}{(1 + s-t)^4} \, ds,
\end{equation}
then there exists a function $t \mapsto \mathscr{M}_t \in [1 , \infty]$ depending only on the dimension $d$ and the moderated environment $w$ such that, for any $t \geq 0$,
\begin{equation} \label{eq:08231101}
    P_\a (t , 0) \leq \frac{\mathscr{M}_t}{(1 + t)^{\frac d2}} \exp \left( - \frac{t}{\mathscr{M}_t L^2 } \right).
\end{equation}
The dependency of the function $\mathscr{M}$ on the parameter $w$ is explicit and it satisfies the following property: if we assume that the environment $\a$ is random, that its law is stationary with respect to both space and time translations and reversible, and if, for any $k \in \N$ and any $(t , e) \in (0 , \infty) \times E(\mathbb{T}_L)$,
\begin{equation} \label{momentsassumptionmoderatedenvironement}
    \E \left[ w(t , e)^{-k} \right] < \infty,
\end{equation}
then $(\mathscr{M}_t)_{t \geq 0}$ is a stationary process and, for any $k \in \mathbb{N},$
\begin{equation} \label{momentsassumptionmoderatedenvironement2}
    \E \left[ \mathscr{M}_t^k \right] < \infty.
\end{equation} 
The result of Mourrat and Otto can thus be applied to establish upper bounds on the fluctuations of random surfaces as follows: by the Helffer-Sj\"{o}strand representation formula (noting that the law of the environment $\a(t , e) := V''(\nabla \phi_L(t , e))$ is stationary with respect to both the space and time variables), we see that, if the moment assumption~\eqref{momentsassumptionmoderatedenvironement} can be verified, then the inequality~\eqref{momentsassumptionmoderatedenvironement2} implies that
\begin{align*}
    \var_{\mathbb{T}_L} \left[\phi(0) \right]  = \E \left[ \int_0^\infty P_\a (t , 0) \, dt \right] & \leq \E \left[ \int_0^\infty \frac{\mathscr{M}_t}{(1 + t)^{\frac d2}} \exp \left( - \frac{t}{\mathscr{M}_t L^2 } \right) \, dt \right] \\
    & \leq 
    \left\{ \begin{aligned}
        C \ln L & ~~ \mbox{if} ~ d=2, \\
        C & ~~\mbox{if} ~ d \geq 3.
    \end{aligned} \right.
\end{align*}
In other words, the question of establishing upper bounds on the fluctuations of the random surface can be reduced to proving the moment condition~\eqref{momentsassumptionmoderatedenvironement} on the moderated environment $w$. The strategy will thus be to prove~\eqref{momentsassumptionmoderatedenvironement}, and the argument is outlined in the following sections.

\subsubsection{A fluctuation estimate for the Langevin dynamic and stochastic integrability of the moderated environment} \label{eq:18111101}
In order to prove the moment condition~\eqref{momentsassumptionmoderatedenvironement}, we will prove the following fluctuation estimate for the Langevin dynamic: for any $R > 0$, there exists a constant $C_R$ depending only on $d$ and $R$ such that, for any time $T \geq 0$ and any edge $e \in E \left( \mathbb{T}_L \right)$,
\begin{equation} \label{eq:22440901}
    \P \left[ \forall t \in [0 , T], \, \left| \nabla \phi(t , e) \right| \leq R   \right] \leq C_R \exp \left( - \frac{T}{C_R} \right).
\end{equation}
Combining this result with assumption~\eqref{eq:1329} and the definition $\a (t , e) := V''\left( \nabla \phi(t , e) \right)$ shows that there exists a constant $C_V$ depending only on $d$ and $V$ such that, for any $T \geq 0$ and any edge $e \in E \left( \mathbb{T}_L \right)$,
\begin{equation} \label{eq:23030901}
    \P \left[ \forall t \in [0 , T], \, \a(t , e) = 0  \right] \leq C_V \exp \left( - \frac{T}{C_V} \right).
\end{equation}
The estimate~\eqref{eq:23030901} implies that the environment arising from the Helffer-Sj\"{o}strand representation cannot remain equal to $0$ for a long time; it can in fact be generalized (the argument is the one of Proposition~\ref{propmoderatedenvtstochint} below) so as to obtain the following stretched exponential stochastic integrability on the moderated environment: there exist an exponent $s > 0$ and a constant $C_V$ such that, for any $R \geq 0$,
\begin{equation*}
    \P \left[ w(t , e) \leq \frac{1}{R} \right] \leq C_V \exp \left( - \frac{R^s}{C_V}\right),
\end{equation*}
which then implies the moment condition~\eqref{momentsassumptionmoderatedenvironement}. 

In the rest of this section, we give an outline of the proof of~\eqref{eq:22440901} for potentials satisfying~\eqref{eq:1329}. The argument relies on three observations:
\begin{enumerate}
    \item[(i)] The Langevin dynamic $\phi_L$ defined in~\eqref{Langevinsketchofproof} can be seen as a \emph{deterministic function} of the initial condition $\phi$ and the Brownian motions $\left\{ B_t(x) \, : \, t \geq 0 , \, x \in \mathbb{T}_L \right\}$.
    \item[(ii)]  For any $x \in \mathbb{T}_L$, the Brownian motion $B_t(x)$ can be decomposed into a sum of independent increments and Brownian bridges as follows: if, for any $n \in \N$ and any $t \in [n , n+1]$, we define
    \begin{equation} \label{def.XnandWnsketch}
            X_n(x) := B_{n+1}(x) -  B_{n}(x) ~~\mbox{and}~~ W_n(t , x) := B_t(x) - B_{n}(x) - (t-n) X_n(x),
    \end{equation}
    then the random variables $\left\{ X_n(x) \, : \, n \in \N \right\}$ form a collection of independent Gaussian random variables (of variance $1$), and the stochastic processes $\left\{ W_n(\cdot , x) \, : \, n \in \N \right\}$ form a collection of independent Brownian bridges. Additionally, the increments are independent of the Brownian bridges.
    \item[(iii)]  Since the trajectory of the Brownian motion $B_t(x)$ can be reconstructed from the values of the increments $\left\{ X_n(x) \, : \, n \in \N \right\}$ and the Brownian bridges $\left\{ W_n(\cdot , x) \, : \, n \in \N \right\}$, we can see the Langevin dynamic $\phi_L$ as a deterministic function of the initial condition, the increments and the Brownian bridges. Using the definition~\eqref{def.XnandWnsketch}, we see that the Langevin dynamic solves the system of stochastic differential equations, for any $n \in \N$,
    \begin{equation} \label{Langevinreadyfordifferentiation}
\left\{ \begin{aligned}
d \phi_L(t , x) &= \nabla \cdot V' \left( \nabla \phi_L \right)(t , x) + \sqrt{2} X_n(x) + \sqrt{2} dW_n(t , x) &~~\mbox{for}~~(t , x) \in (n , n+1) \times \mathbb{T}_L, \\
\phi_L(0 , x) &= \phi(x) & \mbox{for} ~~x \in \mathbb{T}_L.
\end{aligned}
\right.
\end{equation}
\end{enumerate}
The strategy is then to study the partial derivative of the Langevin dynamic with respect to the increment~$X_n(x)$. To this end, we differentiate both sides of~\eqref{Langevinreadyfordifferentiation} with respect to the increment $X_n(x)$, and obtain that the partial derivative $w := \partial \phi_L/\partial X_n(x)$ solves the parabolic equation
\begin{equation*}
\left\{ \begin{aligned}
\partial_t w (t , y) &= \nabla \cdot \a \nabla w (t , y) + \sqrt{2} \indc_{\left\{ n \leq t \leq n+1 \right\}} \indc_{\{ y = x \}}&~~\mbox{for} &~~(t , y) \in (n , n+1) \times \mathbb{T}_L, \\
w(0 , x) &= 0 & \mbox{for} &~~x \in \mathbb{T}_L,
\end{aligned}
\right.
\end{equation*}
where $\a(t ,e) := V''(\nabla \phi_L(t , e))$ is the same environment as the one appearing in the Helffer-Sj\"{o}strand representation formula. The Duhamel's principle then yields the identity (using the definition of the heat kernel~\eqref{eq:11012312})
\begin{equation*}
    w(n+1 , x) = \sqrt{2} \int_{n}^{n+1} P_\a \left( n+1 , x ; s , x \right)  + \frac{1}{\left| \mathbb{T}_L \right|} \, ds.
\end{equation*}
The right-hand side of the previous display can be lower bounded as follows. We first note that $P_\a \left( t , x ; s , x \right)  + \frac{1}{\left| \mathbb{T}_L \right|} \in [0,1]$ by the maximum principle.  Combining these bounds with the upper bound $\a \leq 1$, the identity $\partial_t P_\a = \nabla \cdot \a \nabla P_\a$ and the definition of the discrete elliptic operator, we deduce that, for any $t , s \in (0 , \infty)$ with $t \geq s$,
\begin{equation} \label{eq:17571101}
    P_\a (s , x ; s , x) + \frac{1}{\left| \mathbb{T}_L\right|}= 1 ~~\mbox{and}~~  \left| \partial_t P_\a (t , x ; s , x) \right| \leq 2d, 
\end{equation}
which then implies
\begin{equation*}
    w(n+1 , x) \geq \sqrt{2} \int_{0}^{1} \max ( 1 - 2d s , 0 ) \, ds = \frac{\sqrt{2}}{4d} > 0.
\end{equation*}
In words, the partial derivative of the value $\phi_L(n+1 , x )$ with respect to the increment $X_n(x)$ is lower bounded by $\sqrt{2}/(4d)$ uniformly over all the realizations of the Brownian motions. This implies that $\phi_L(n+1,x)$ is an increasing function of the increment $X_n(x)$, and more specifically, that increasing the value of the increment $X_n(x)$ by a value $X\geq 0$ (while keeping the other increments and the Brownian bridges unchanged), causes the value of $\phi_L(n+1,x)$ to increase by at least $X/(4d)$.

The previous argument can be refined so as to prove that, for any edge $e =  (x_0 , x) \in E \left( \mathbb{T}_L \right)$, the derivative of the discrete gradient $\nabla \phi_{L}(n+1 , e)$ with respect to the increment $X_n(x)$ is lower bounded by a positive real number uniformly over the realizations of the increments and the Brownian bridges.

This property can then be used to prove the following result: for any $R > 0$, there exists $\ep := \ep(R) > 0$ such that, if we denote by $\mathcal{F}_{n , x}$ the $\sigma$-algebra generated by the initial condition $\phi$, the Brownian bridges $\left\{ W_m(\cdot , y) \, : \, m \in \N , \, y \in \mathbb{T}_L \right\}$ and the increments $\left\{ X_m(y) \, : \, m \neq n \, \mbox{or} \, x \neq y \right\}$, then one has the almost sure upper bound on the conditional probability
\begin{equation} \label{eq:20331001}
    \P \left[ \left| \nabla \phi_L(n+1 , e) \right| \leq R ~|~ \mathcal{F}_{n , x} \right] \leq 1- \ep. 
\end{equation}
In other words, the probability of the event $\{ \left| \nabla \phi_L(n+1 , e) \right| \leq R \}$ conditionally on all the randomness except the increment $X_n(x)$ is almost surely smaller than $1-\ep$.

The inequality~\eqref{eq:20331001} can then be iterated (making use of the independence between the increments and the Brownian bridges) to prove that, for any $N \in \N$,\begin{equation*}
    \P \left[ \forall n \in \{ 1 , \ldots , N \}, ~ \left| \nabla \phi_L (n , e) \right| \leq R \right] \leq ( 1 - \ep)^N,
\end{equation*}
which implies the exponential decay stated in~\eqref{eq:22440901}.

\subsubsection{Extension of the argument to the potentials satisfying Assumption~\ref{assumption1}}

In the case of potentials satisfying Assumption~\ref{assumption1}, the second derivative of the potential $V$ is unbounded from above, and thus the environment $\a$ appearing in the Helffer-Sj\"{o}strand representation formula can take arbitrarily large values. This implies that the argument written above needs to be modified in two aspects:
\begin{itemize}
    \item The proof of Mourrat and Otto~\cite[Theorem 4.2]{MO16} is written in infinite volume for degenerate dynamic environments satisfying the upper bound $\a \leq 1$. Their argument needs to be adapted the torus, and to cover a class of environments which may take arbitrarily large values. This is the subject of Section~\ref{section4ondiagupp}.
    \item In the situation where the environment $\a$ can take arbitrarily large values, the inequality on the time derivative of the heat kernel~\eqref{eq:17571101} does not hold uniformly over all the realizations of the Brownian motions, and thus the derivative of $\phi_L(n+1 , x )$ with respect to the increment $X_n(x)$ cannot be lower bounded (by a strictly positive real number) uniformly over all the realizations of the Brownian motions. This difficulty is handled by first establishing a sharp stochastic integrability estimate on the discrete gradient of a random surface distributed according to $\mu_{\mathbb{T}_L}$ (see Proposition~\ref{prop.stochintgradfield}). Once equipped with this result, we are able to adapt the argument outlined in Section~\ref{eq:18111101} to this setting (see Proposition~\ref{prop3.4}), at the cost of more technicalities and a deterioration of the stochastic integrability in the fluctuation estimate (from exponential rate to the super-polynomial rate in Proposition~\ref{prop3.4}).
\end{itemize}

\subsection{Discussion and background} \label{sectiondiscadback}

\subsubsection{Random surfaces}

The study of random surfaces was initiated in the 1970s by Brascamp, Lieb and Lebowitz~\cite{BLL75} who obtained sharp localization and delocalization estimates for potentials satisfying $\inf V'' > 0$ and for a class of convex potentials with quadratic growth. Since then, the result of localization and delocalization has been extended to different classes of potentials as mentioned above, and various other aspects of the model have been studied by the mathematical community (see~\cite{F05, V06}).

\emph{The hydrodynamic limit} of the $\nabla \phi$-model for uniformly convex potentials was established by Funaki and Spohn in the important contribution~\cite{FS}. The result was later extended to various settings: the hydrodynamic limit with Dirichlet boundary condition and with a conservation law was proved by Nishikawa~\cite{nishikawa2003hydrodynamic, nishikawa2002hydrodynamic}. More recently, the hydrodynamic limit was established for a class on non-convex potentials by Deuschel, Nishikawa and Vignaud~\cite{DNV19}.

On the level of fluctuations, it is expected that \emph{the scaling limit} of the $\nabla \phi$-model is a \emph{continuum Gaussian free field} under mild integrability conditions on the potential $V$. On a rigorous level, a general convergence result has been established for twice continuously differentiable and uniformly convex potentials by Brydges and Yau~\cite{BY}, Naddaf, Spencer~\cite{NS} and Giacomin, Olla and Spohn~\cite{GOS}. In particular, the contributions~\cite{NS, GOS} used the Helffer-Sj\"{o}strand representation formula (introduced in~\cite{helffer1994correlation}), which has become well-used technique to study the model, and is a central tool in the proof of Theorem~\ref{main.theorem}. The scaling limit has then been established in various different settings. A finite-volume version of the result was established by Miller~\cite{Mil}, a local limit theorem was established in two dimensions by Wu~\cite{wu2022local} and the scaling limit of the square of the field was identified by Deuschel and Rodriguez~\cite{deuschel2022isomorphism}. The scaling limit was proved for a class of convex potentials satisfying the assumption $\inf V'' > 0$ by Andres and Taylor~\cite{AT21}. In the nonconvex setting, it was established in the high temperature regime by Cotar and Deuschel~\cite{CD12}, in the low temperature regime by renormalization group arguments by Hilger~\cite{hilger2016scaling, hilger2020scaling} (buiding upon the techniques of~\cite{AKM16}), and in the case of non-convex potentials which can be written as a mixture of Gaussian by Biskup, Spohn~\cite{BS11} and Ye~\cite{ye2019models}.

Besides the hydrodynamic and scaling limits, other aspects of the model which have been the subject of consideration from the community include: the strict convexity of the surface tension for non-convex potentials by Adams, Koteck{\'y} and M\"{u}ller~\cite{AKM16} (the Cauchy-Born rule was also investigated in~\cite{adams2019cauchy}), Cotar, Deuschel and M\"{u}ller~\cite{CDM09}, its $C^2$-regularity by Armstrong, Wu~\cite{AW}, the decay of covariances for the gradient of the field by Delmotte, Deuschel~\cite{DD05}, Cotar, Deuschel~\cite{CD12}, and Hilger~\cite{hilger2020decay}, large deviations by Deuschel, Giacomin and Ioffe~\cite{DGI00}, Funaki, Nishikawa~\cite{funaki2001large}, entropic repulsion by Deuschel, Giacomin~\cite{deuschel2000entropic}, the maximum of the field by Belius, Wu~\cite{BeliusWumax} and Wu, Zeitouni~\cite{wu2018subsequential}, uniqueness (or lack of thereof) of shift-ergodic infinite-volume gradient Gibbs states by Biskup, Koteck\'{y}~\cite{BK07} and Buchholz~\cite{buch2021phase}. A more detailed account of the literature can be found in the review articles~\cite{F05, V06}.

We complete this section by mentioning some results and recent progress which have been obtained on a related model: the \emph{integer-valued} random surfaces (formally obtained by replacing the Lebesgue measure by the counting measure on $\Z$ in the right-hand side of~\eqref{def.muTL}). In this setting, a temperature is usually incorporated in the definition of the model. A different phenomenology is then observed and the model is known to exhibit a phase transition in two-dimensions between a localized regime (at low temperature where the variance of the field remains bounded) and a delocalized regime (at high temperature where the variance of the field grows logarithmically). The existence of this phase transition was originally established in the celebrated article of Fr\"{o}hlich and Spencer~\cite{frohlich1981kosterlitz, kharash2017fr} (we also refer to the work of Wirth~\cite{wirth2019} on the maximum of the field based on these techniques), and has been the subject of recent developments in a series of works by Lammers~\cite{lammers2022height, lammers2022dichotomy, lammers2023bijecting}, van Engelenburg and Lis~\cite{van2023elementary, van2023duality} and Aizenman, Harel, Peled and Shapiro~\cite{aizenman2021depinning}. In the high temperature regime and in the case of the discrete Gaussian model (i.e., when $V(x) = x^2/2$), the scaling limit of the model was recently identified by Bauerschmidt, Park and Rodriguez by implementing a delicate renormalization group argument in~\cite{BPR1, BPR2}.

\subsubsection{Parabolic equations with degenerate random coefficients and the random conductance model}

In the uniformly elliptic setting, upper bounds on the heat kernel were obtained in the celebrated work of Nash~\cite{nash1958continuity}. Due to the connections between heat kernels and reversible random walks, it has been an active line of research to extend these heat kernel estimates to random degenerate environments, and two cases can be distinguished: the \emph{static} environments and the \emph{dynamic} environments. A typical example of static random degenerate environment is the supercritical Bernoulli (bond) percolation cluster. In that case, the upper bounds on the heat kernel were established by Barlow~\cite{Barlowheat} and Mathieu, Remy~\cite{mathieu2004isoperimetry}. These bounds (or the ingredient developed to prove it) became one of the ingredients in the proof of the quenched invariance principle for the random walk on the percolation cluster by Sidoravicius, Sznitman~\cite{SS}, Berger, Biskup~\cite{BB}, Mathieu, Piatnitski~\cite{MP}, the parabolic Harnack inequality and the local limit theorem by Barlow, Hambly~\cite{BH}. The existence of heat kernel upper and lower bounds (matching the ones of the lattice) have been established for more general degenerate environments satisfying suitable moments assumptions by Andres, Deuschel, Slowik~\cite{andres2016heat, andres2019heat, ADS20} and Andres, Halberstam~\cite{andres2021lower}, but this phenomenon is not generic and anomalous heat kernel decay has been proved for some random degenerate environments by Berger, Biskup, Hoffman and Kozma~\cite{berger2008anomalous}, Boukhadra~\cite{boukhadra2010heat}, Biskup, Boukhadra~\cite{biskup2012subdiffusive} and Buckley~\cite{Buckley}. Besides the question of the behavior of the heat kernel, the invariance principle has been established for degenerate conductances by Biskup, Prescott~\cite{BP}, Andres, Barlow, Deuschel, Hambly~\cite{andres2013invariance}, Mathieu~\cite{mathieu2008quenched}, Procaccia, Rosenthal, Sapozhnikov~\cite{procaccia2016quenched} and Bella, Sch\"{a}ffner~\cite{bella2020quenched}. We refer to to~\cite{biskupsurvey} for a survey of the literature on the random conductance model

Significant progress have been achieved in the case of \emph{dynamic} environments (which is the relevant one for the problem considered in this article). In this setting, the invariance principle has been proved under various assumptions on the environment by Boldrighini, Minlos, Pellegrinotti~\cite{boldrighini1997almost, boldrighini2007random}, Rassoul-Agha, Sepp\"{a}l\"{a}inen~\cite{rassoul2005almost}, Bandyopadhyay, Zeitouni~\cite{bandyopadhyay2005random}, Dolgopyat, Keller, Liverani~\cite{dolgopyat2008random}, Avena~\cite{avena2012symmetric} and Redig, V\"{o}llering~\cite{redig2013random}, and for general ergodic degenerate conductances with moment conditions by Andres, Chiarini, Deuschel and Slowik~\cite{andres2018quenched} (a local limit theorem was further established in~\cite{andres2021quenched}).

Finally, heat kernel upper bounds were established for a class of degenerate dynamic environments satisfying a moment assumption on the \emph{moderated} environment introduced above by Mourrat and Otto in~\cite{MO16}. The proof of Theorem~\ref{main.theorem} strongly relies on their techniques. Combining and enhancing the techniques of~\cite{andres2018quenched} and~\cite{MO16}, Biskup and Rodriguez~\cite{biskup2018limit} established the quenched invariance principle for random walks evolving in a dynamic degenerate environment satisfying an assumption related to the one used in~\cite{MO16}. In this line of research, we finally mention the recent contribution of Biskup, Pan~\cite{biskup2022invariance} which establishes a quenched invariance principle for a class of ergodic degenerate environments in the one-dimensional setting.

\subsection{Further comments and perspective} \label{sectionfurthercomm}
It is plausible that the techniques developed in this article can be further developed to obtain more precise information on the behavior of the random surfaces with an interaction potential satisfying Assumption~\ref{assumption1}. It seems for instance reasonable to us that the fluctuation estimate of Proposition~\ref{prop3.4} can be used to prove that the surface tension of the model is strictly convex (i.e., that the eigenvalues of its Hessian are always strictly positive). The strict convexity of the surface tension plays an important role in the proof of the hydrodynamic limit in~\cite{FS}, and we further believe that this result could be combined with the estimate of Theorem~\ref{main.theorem} to prove a quantitative version of the hydrodynamic limit following the techniques of~\cite{armstrong2022quantitative}. Once the quantitative hydrodynamic limit has been established, it should be possible to develop a large-scale regularity theory for the model (see~\cite[Theorem 1.5]{armstrong2022quantitative}). This result would then be useful to quantify the ergodicity of the environment appearing in the Helffer-Sj\"{o}strand representation formula and would be helpful to establish a quantitative version of the scaling limit of the model (following the insight of~\cite{NS, GOS}). We refer to the introduction of~\cite{armstrong2022quantitative} for a more detailed description of this line of research. We plan to investigate this in a future work. On a qualitative level, we mention that it would be interesting to investigate whether the techniques of Biskup and Rodriguez~\cite{biskup2018limit} can be adapted to the framework considered here to also identify the scaling limit of the model.

\subsection{Organization of the paper}

The rest of the paper is organized as follows. Section~\ref{section2} collects some notation and preliminary results. In Section~\ref{Section3}, we prove a stochastic integrability estimate for the gradient of a random surface distributed according to the periodic Gibbs measure $\mu_{\mathbb{T}_L}$ (Proposition~\ref{prop.stochintgradfield}), and deduce from it a fluctuation estimate for the Langevin dynamic (Proposition~\ref{prop3.4}). Section~\ref{section4ondiagupp} combines the results of Section~\ref{Section3} with the techniques and results of Mourrat and Otto~\cite{MO16} (essentially adapting their argument to obtain an on-diagonal upper bound for the heat kernel in the case of the torus, and when the environment is not bounded from above but possesses strong stochastic integrability properties), and completes the proof of Theorem~\ref{main.theorem} by using the Helffer-Sj\"{o}strand representation formula.

\subsection{Convention for constants and exponents}

Throughout this article, the symbols $C$ and $c$ denote positive
constants which, except if explicitly stated, may vary from line to line, with $C$ increasing and $c$ decreasing. We will always assume that $C \in [1 , \infty)$ and $c \in ( 0 , 1]$. These constants may depend on various parameters which will be made explicit in the statements by the following convention: we will write $C: =C(d , V)$ to specify that the constant $C$ depends only on $d$ and $V$.

\bigskip

\textbf{Acknowledgments:} The author is indebted to S. Armstrong, M. Harel, P. Lammers, J.-C. Mourrat, R. Peled, F. Schweiger, O. Zeitouni for encouragement and helpful conversations on the topic of this work, and is specifically grateful to F. Schweiger, O. Zeitouni for suggesting to prove Proposition~\ref{prop.stochintgradfield}, to P. Lammers for explaining a short proof of this result in the case of symmetric potentials (on which the proof below is based), and to J.-C. Mourrat for explaining the arguments of~\cite{MO16}.

\section{Notation and preliminary results} \label{section2}

\subsection{General notation} \label{secgeneralnotation}

We fix an integer $L \in \N$ with $L \geq 1$, consider the torus $\mathbb{T}_L := (\Z / (2L+1)\Z)^d$, and denote by $\pi : \Zd \to \mathbb{T}_L$ the canonical projection. Given a subset $U \subseteq \mathbb{T}_L$ or $U \subseteq  \Zd$, we let $E(U)$ be the set of \emph{positively oriented} edges of $U$ (for some pre-determined orientation). For $r \in \N$, we let $\Lambda_r := \left\{ - r , \ldots , r \right\}^d \subseteq \Zd$ and identify these boxes as subsets of the torus using the canonical embedding $\pi$. We note that the canonical embedding $\pi_{| \mathbb{T}_L}$ restricted to the box $\Lambda_L$ is a bijection, whose inverse will be denoted by $\pi_{| \mathbb{T}_L}^{-1}$. We denote by $\left| \cdot \right|$ be the Euclidean norm on $\Zd$, and, for $x \in \mathbb{T}_L$, we write $\left| x \right| := | \pi_{| \mathbb{T}_L}^{-1}(x) |$.

Given an edge $e \in E \left( \mathbb{T}_L \right),$ and a vertex $x \in \Lambda_L$, we write $x \in e$ if $x$ is one of the endpoints of $e$. Given two edges $e , e' \in  E \left( \mathbb{T}_L \right)$, we write $e \cap e' \neq \emptyset$ if $e$ and $e'$ have at least one endpoint in common. 

Given an edge $e \in \mathbb{T}_L$, we write $\sum_{x \in e}$ and $\sum_{e' \cap e \neq \emptyset}$ to respectively sum over the endpoints of $e$ and over the edges which have (at least) one endpoint in common with $e$. Given a vertex $x \in \mathbb{T}_L$, we write $\sum_{e \ni x}$ to sum over the edges which have $x$ as an endpoint.

Given two real numbers $a , b$, we denote by $a \wedge b = \min(a , b)$ and by $a \vee b = \max(a , b)$, and by $\lfloor a \rfloor$ and $\lceil a \rceil$ the floor and ceiling of $a$. We denote by $\indc_A$ the indicator function of a set $A$ and, for $x \in \mathbb{T}_L$, we let $\delta_x$ be the function defined on the torus by the formula: $\delta_x(y) = 0$ if $y \neq x$ ans $\delta_x(x) = 1$.

For any potential $V : \R \to \R$ satisfying Assumption~\ref{assumption1}, we denote by
\begin{equation} \label{def.RV}
    R_V := 2 \inf \left\{ R \geq 1 \, : \, \inf_{|x| \geq R} V''(x) \geq 1 \right\}.
\end{equation}
Assumption~\ref{assumption1} guarantees that $R_V$ is a finite nonnegative real number.

\subsubsection{Functions} \label{subSectionsets}

Given a subset $U \subseteq \mathbb{T}_L$ or $U \subseteq  \Zd$, we denote by $\left| U \right|$ the cardinality of $U$. We have in particular $\left| \mathbb{T}_L \right| = (2L+1)^d$. For any function $f : U \to \R$, and any exponent $p \geq 1$, we define the $L^p$-norm and the normalized $L^p$-norm of $f$ by the formulae
\begin{equation*}
    \left\| f \right\|_{L^p(U)}^p := \sum_{x \in U} f(x)^p ~~\mbox{and}~~ \left\| f \right\|_{\underline{L}^p(U)}^p := \frac{1}{|U|} \sum_{x \in U} f(x)^p.
\end{equation*}
We denote the discrete gradient of a function $f : \mathbb{T}_L \to \R$ over a positively oriented edge $e = (x , y) \in E \left(  \mathbb{T}_L\right)$ by the formula
\begin{equation*}
    \nabla f(e) := f(y) - f(x).
\end{equation*}
We also define $f(e) = (f(x) + f(y))/2$. This definition is motivated by the following identity: for any pair of functions $f , g : \mathbb{T}_L \to \R$ and any $e \in E(\mathbb{T}_L),$
\begin{equation*}
    \nabla (fg)(e) = f(e) \nabla g(e) + g(e) \nabla f(e).
\end{equation*}
We extend the definition of $L^p$-norms to functions defined on edges by writing, for any function $u : E(\mathbb{T}_L) \to \R$, 
\begin{equation*}
    \left\| u \right\|_{L^p(U)}^p := \sum_{e \in E(U) } |u(e)|^p ~~\mbox{and}~~ \left\| u \right\|_{\underline{L}^p(U)}^p := \frac{1}{|U|} \sum_{e \in E(U) } |u(e)|^p.
\end{equation*}
We define the nonlinear elliptic operator $\nabla \cdot V' (\nabla u)$ by the formula
\begin{equation*}
    \nabla \cdot V'(\nabla u)(x) := \sum_{\substack{e \in E \left( \mathbb{T}_L\right) \\ e = (x , y)}} V'(\nabla u(e)) - \sum_{\substack{e \in E \left( \mathbb{T}_L\right) \\ e = (y , x)}} V'(\nabla u(e)) .
\end{equation*}
This definition takes into account the set $E \left( \mathbb{T}_L\right)$ is defined to be the set of positively oriented edges. Making this distinction is useful to cover the case of potentials $V$ which are not symmetric. The main property of this operator is that it satisfies the following discrete integration by parts property: for any pair of functions $u , v : \mathbb{T}_L \to \R$,
\begin{equation*}
    \sum_{x \in \mathbb{T}_L} \nabla \cdot V'(\nabla u)(x) v(x) = - \sum_{e \in E \left( \mathbb{T}_L \right)} V' \left( \nabla u(e) \right) \nabla v(e).
\end{equation*}

\subsubsection{Parabolic equations and heat kernel}

An environment is a measurable map $\a : (0,\infty) \times E(\mathbb{T}_L) \to [0 , \infty)$. Given an environment, we denote by $\nabla \cdot \a \nabla$ the dynamic elliptic operator defined by the formula: for any map $u : (0,\infty) \times \mathbb{T}_L  \to \R$ and any $(t , x) \in (0,\infty) \times \mathbb{T}_L$,
\begin{equation} \label{eq:discdynlaplacian}
    \nabla \cdot \a \nabla u (t , x) = \sum_{\substack{e \in E \left( \mathbb{T}_L\right) \\ e = (x , y)}} \a (t , e) \nabla u(e) - \sum_{\substack{e \in E \left( \mathbb{T}_L\right) \\ e = (y , x)}}  \a (t , e) \nabla u(e).
\end{equation}
This operator satisfies the discrete integration by parts property: for any pair of functions $u, v : (0,\infty) \times \mathbb{T}_L  \to \R$,
\begin{equation*}
    \sum_{x \in \mathbb{T}_L} \nabla \cdot \a \nabla u (t , x) v(t , x) = - \sum_{e \in E(\mathbb{T}_L)} \a(t , e) \nabla u (t , e) \nabla v (t , e).
\end{equation*}
For $(s, y) \in [0 , \infty) \times \mathbb{T}_L$, we define the heat kernel $P_\a(\cdot , \cdot ; s , y) : (s , \infty) \times \mathbb{T}_L \to \R$ to be the solution of the parabolic equation
\begin{equation} \label{eq:11012312}
    \left\{ \begin{aligned}
     \partial_t P_\a (t , x ; s , y) - \nabla \cdot \a \nabla P_\a (t , x ; s , y) &= 0 &~~\mbox{for} &~~ (t , x) \in (0 , \infty) \times \mathbb{T}_L, \\
    P_\a (s , x ; s , y) & = \delta_y(x) - \frac{1}{\left| \mathbb{T}_L \right|} & ~~\mbox{for} &~~ x \in \mathbb{T}_L.
    \end{aligned} \right.
\end{equation}
To simplify the notation, we write $P_\a(t , x)$ instead of $P_\a(t , x ; 0 , 0).$

\begin{remark}
The preservation of mass for parabolic equations shows that the sum $\sum_{x \in \mathbb{T}_L} P_\a(t , x)$ is constant in time. The normalizing term $1/ \left| \mathbb{T}_L \right|$ in~\eqref{eq:11012312} ensures that this sum is equal to $0$, and in fact ensures that the heat kernel $P_\a$ converges to $0$ as the time tends to infinity. 
\end{remark}

\begin{remark}
The maximum principle for parabolic equation ensures that, for any $(t , x) \in (0 , \infty) \times \mathbb{T}_L$,
\begin{equation} \label{upperbound.PAmaxprinc}
   -\frac{1}{\left| \mathbb{T}_L \right|}\leq P_\a (t , x) \leq 1 - \frac{1}{\left| \mathbb{T}_L \right|}.
\end{equation}
Using these inequalities and the identity $\sum_{x \in \mathbb{T}_L} P_\a(t , x) = 0$ for any $t \geq 0$, we see that, for any $t \geq 0$,
\begin{equation} \label{L1normheatkernelbounded}
    \left\| P_\a (t , \cdot)\right\|_{L^1 \left( \mathbb{T}_L \right)} \leq 2.
\end{equation}
\end{remark}

\subsection{The Langevin dynamic and the Helffer-Sj\"{o}strand representation formula} \label{section2.2}

The Gibbs measure $\mu_{\mathbb{T}_L}$ is naturally associated with the Langevin dynamic defined below. In the following definition, we let $L \in \N$ be an integer, consider a collection $\left\{ B_t(x) \, : \, t \geq 0, \, x \in \mathbb{T}_L \right\}$ of independent Brownian motions, and let $\phi : \mathbb{T}_L \to \R$ be a random surface sampled according to the Gibbs measure~$\mu_{\mathbb{T}_L}$ independently of the Brownian motions. 

\begin{definition}[Langevin dynamic in the torus]
We define the Langevin dynamic associated with the Gibbs measure $\mu_{\mathbb{T}_L}$ to be the solution $\phi_L : \mathbb{T}_L \to \R$ of the system of stochastic differential equations
\begin{equation} \label{def.Langevindynamics}
\left\{ \begin{aligned}
d \phi_L(t , x) &= \nabla \cdot V' \left( \nabla \phi_L \right)(t , x) + \sqrt{2} dB_t(x) &~~\mbox{for}~~(t , x) \in (0 , \infty) \times \mathbb{T}_L, \\
\phi_L(0 , x) &= \phi(x) & \mbox{for} ~~x \in \mathbb{T}_L.
\end{aligned}
\right.
\end{equation}
\end{definition}
We note that the dynamic $\phi_L$ can be seen as a \emph{deterministic} function of the initial condition $\phi$ and of the Brownian motions $\left\{ B_t(x) \, : \, t \geq 0, \, x \in \mathbb{T}_L \right\}$. To highlight this dependency, we will use the notation
\begin{equation*}
    \phi_L(t , x) \left( \phi , \left\{ B_t(x) \, : \, t \geq 0, \, x \in \mathbb{T}_L \right\} \right).
\end{equation*}
This will be useful in Section~\ref{fluctuationofgradphi}, as the dynamic can then be differentiated with respect to the increments of the Brownian motions.

The law of the dynamic $\phi_L$ is not exactly stationnary as the spatially averaged value of the dynamic is not constant: summing the first equation of~\eqref{def.Langevindynamics} over $x \in \mathbb{T}_L$ (and using a discrete integration by parts on the torus to cancel the term involving the nonlinear elliptic operator) shows the identity
\begin{equation*}
    \sum_{x \in \mathbb{T}_L} \phi_L(t , x) = \sum_{x \in \mathbb{T}_L} B_t(x).
\end{equation*}
In particular, the law of $\phi_L(t , \cdot)$ is not equal to $\mu_{\mathbb{T}_L}$ (if $t \neq 0$), as the sum would have to be equal to $0$. Nevertheless, this is the only obstruction and the process
\begin{equation*}
    \phi_L(t , \cdot) - \frac{1}{\left| \mathbb{T}_L \right|} \sum_{x \in \mathbb{T}_L} \phi_L(t , x) 
\end{equation*}
is stationnary both with respect to the space and time variables. It is also reversible. Note that, since the second term in the right-hand side is spatially constant, the discrete gradient $ \nabla \phi_L$ is a stationnary process.

We next state the Helffer-Sj\"{o}strand representation which allows to express the variance of linear functionals of a random surface distributed according to $\mu_{\mathbb{T}_L}$ in terms of the solution of a random parabolic equations defined in terms of the Langevin dynamic. The formula was initially introduced in~\cite{helffer1994correlation, NS, DGI00, GOS} and is stated below in the case of the torus for the specific observable $\phi(0)$.

\begin{proposition}[Helffer-Sj\"{o}strand representation formula on the torus]
Let $P_\a$ be the solution of the parabolic equation in the torus
\begin{equation*}
    \left\{ \begin{aligned}
     \partial_t P_\a(t,x) - \nabla \cdot \a \nabla P_\a(t,x)  &= 0 &~~\mbox{for} &~~ (t , x) \in (0 , \infty) \times \mathbb{T}_L, \\
    P_\a (0 , x) & = \delta_0(x) - \frac{1}{\left| \mathbb{T}_L \right|} & ~~\mbox{for} &~~ x \in \mathbb{T}_L,
    \end{aligned} \right.
\end{equation*}
where $\a : ( 0 , \infty) \times E \left( \mathbb{T}_L \right) \to [0, \infty)$ is the random dynamic environment defined by the formula, for any $(t , e) \in (0, \infty) \times E(\mathbb{T}_L)$
\begin{equation*}
    \a(t , e) := V''(\nabla \phi_L (t , e)).
\end{equation*}
Then one has the identity
\begin{equation*}
    \var_{\mathbb{T}_L} \left[  \phi(0) \right] = \E \left[ \int_0^\infty P_\a (t , 0) \, dt \right].
\end{equation*}
\end{proposition}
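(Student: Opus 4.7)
The plan is to combine an abstract covariance identity, valid for any reversible Langevin dynamic, with a pathwise differentiation of the SDE~\eqref{def.Langevindynamics} with respect to its initial condition. The key observation bridging the two is that the linearization of the nonlinear drift $\nabla \cdot V'(\nabla \phi_L)$ along a perturbation of the initial data is precisely the divergence-form operator $\nabla \cdot \a \nabla$ with $\a(t,e) = V''(\nabla \phi_L(t,e))$, so the response function to a properly normalized $\delta_0$-type initial perturbation coincides with the heat kernel $P_\a$ from~\eqref{eq:11012312}.

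First I would record the abstract identity. After projecting out the non-stationary direction $|\mathbb{T}_L|^{-1}\sum_x B_t(x)$, the Langevin dynamic on $\Omega^\circ_{\mathbb{T}_L}$ is reversible with respect to $\mu_{\mathbb{T}_L}$; its generator $\mathcal{L}$ is self-adjoint and negative semidefinite on $L^2(\mu_{\mathbb{T}_L})$ and satisfies $-\mathcal{L}^{-1} = \int_0^\infty e^{t\mathcal{L}}\,dt$ on the orthogonal complement of constants. Applied to $f(\phi)=\phi(0)$, which is $\mu_{\mathbb{T}_L}$-centered by the symmetry $\phi\mapsto-\phi$, this gives
\begin{equation*}
\var_{\mu_{\mathbb{T}_L}}[\phi(0)] = \int_0^\infty \mathcal{E}(f, P_t f)\,dt,
\end{equation*}
where $\mathcal{E}(f,g) = \E_{\mu_{\mathbb{T}_L}}\bigl[\sum_{x\in\mathbb{T}_L} \partial^\circ_{\phi(x)} f \cdot \partial^\circ_{\phi(x)} g\bigr]$ is the Dirichlet form of $-\mathcal{L}$ and $\partial^\circ_{\phi(x)}$ is the partial derivative along the tangent space of $\Omega^\circ_{\mathbb{T}_L}$, so that $\partial^\circ_{\phi(x)} f = \delta_0(x) - 1/|\mathbb{T}_L|$.

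Second, I would compute $\nabla P_t f$ by pathwise differentiation. Since the noise in~\eqref{def.Langevindynamics} is additive and independent of $\phi$, the map $\phi\mapsto\phi_L(t,\cdot)$ is almost surely G\^ateaux-differentiable, and the directional derivative $u := (D_\phi\phi_L)\cdot h$ along a perturbation $h\in\Omega^\circ_{\mathbb{T}_L}$ of the initial data solves the linearized parabolic equation $\partial_t u = \nabla \cdot [V''(\nabla\phi_L)\nabla u]$ with $u(0,\cdot)=h$. Choosing $h = \delta_0 - 1/|\mathbb{T}_L|$ identifies $u(t,x)$ with $P_\a(t,x)$ of~\eqref{eq:11012312}. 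Combining with the Markov property $P_t f(\phi) = \E[\phi_L(t,0)\mid\phi_L(0)=\phi]$ yields $\partial^\circ_{\phi(y)} P_t f(\phi) = \E[P_\a(t,0;0,y)\mid \phi_L(0)=\phi]$, and substituting into the Dirichlet form, using $\partial^\circ_{\phi(y)} f = \delta_0(y) - 1/|\mathbb{T}_L|$ together with $\sum_y P_\a(t,0;0,y)=0$, collapses the sum over vertices to $\mathcal{E}(f, P_t f) = \E_{\mu_{\mathbb{T}_L}}[P_\a(t,0)]$. Integrating in $t$ and applying Fubini---justified by~\eqref{L1normheatkernelbounded} combined with exponential decay of $P_t f$ in $L^2(\mu_{\mathbb{T}_L})$ coming from the Poincar\'e inequality for $\mathcal{L}$ in finite volume---yields the claim.

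The main obstacle will be the rigorous justification of the pathwise differentiation under the bare $V \in C^2$ hypothesis assumed here, together with the careful bookkeeping of the $-1/|\mathbb{T}_L|$ corrections required because the state space is the affine hyperplane $\Omega^\circ_{\mathbb{T}_L}$ rather than $\R^{\mathbb{T}_L}$. The first point is standard in finite volume and can be handled by a Duhamel representation together with a Gr\"onwall estimate on the linearized SDE, exploiting that the coefficients $V''(\nabla\phi_L)$ are nonnegative and that the spatial operator $\nabla\cdot\a\nabla$ is bounded on $\R^{\mathbb{T}_L}$; the second is cleanest if one sets up every object---initial conditions, gradients, Dirichlet form, heat kernel---directly on $\Omega^\circ_{\mathbb{T}_L}$ from the start, as~\eqref{eq:11012312} already does.
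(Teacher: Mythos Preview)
The paper does not give a proof of this proposition: it is stated as a known fact with references to~\cite{helffer1994correlation, NS, DGI00, GOS}, so there is no in-paper argument to compare against. Your approach is the standard one in the literature---write the variance via the semigroup/Dirichlet-form identity and compute the carr\'e-du-champ by linearizing the Langevin flow---and it is essentially correct.

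One small correction: you justify that $f(\phi)=\phi(0)$ is $\mu_{\mathbb{T}_L}$-centered ``by the symmetry $\phi\mapsto-\phi$''. That symmetry is \emph{not} available here, since Assumption~\ref{assumption1} does not require $V$ to be even. The right reason is translation invariance of $\mu_{\mathbb{T}_L}$ on the torus: $\E_{\mu_{\mathbb{T}_L}}[\phi(x)]$ is independent of $x$, and the constraint $\sum_{x}\phi(x)=0$ forces this common value to be zero.

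On the Fubini step at the end: noting that $P_\a(t,0)\geq -|\mathbb{T}_L|^{-1}$ is not by itself enough, and your appeal to exponential decay of $P_t f$ controls $\E[P_\a(t,0)]=\mathcal{E}(f,P_t f)$ rather than $\E[|P_\a(t,0)|]$. A clean way to close this in finite volume is to observe that, pathwise, $\|P_\a(t,\cdot)\|_{L^2(\mathbb{T}_L)}^2$ is nonincreasing and $P_\a(t,0)\leq \|P_\a(t,\cdot)\|_{L^2(\mathbb{T}_L)}$, and then use the (quenched) spectral-gap/Poincar\'e argument on $\Omega^\circ_{\mathbb{T}_L}$ you already invoke to get almost-sure integrability in $t$; alternatively, first prove the identity for uniformly elliptic $V$ and pass to the limit.
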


\subsection{Efron's monotonicity theorem for log concave measures}
In this section, we state the Efron's monotonicity theorem for a pair of independent log-concave random variables due to Efron~\cite{Efron1965}.

\begin{theorem}[Efron's monotonicity theorem~\cite{Efron1965}]
    Let $(X , Y)$ be a pair of independent, real-valued  and log-concave random variables and let $\Psi : \R^2 \to \R$ be a function which is nondecreasing in each of its arguments, then the conditional expectation
    \begin{equation*}
            \E \left[ \Psi(X , Y) \, | \, X + Y = s \right] ~\mbox{is nondecrasing in}~ s.
    \end{equation*}
\end{theorem}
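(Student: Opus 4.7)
The plan is to establish the monotonicity of $s \mapsto \E[\Psi(X,Y) \mid X+Y = s]$ by constructing a \emph{monotone coupling} of the conditional distributions on the two diagonals $\{X+Y = s_1\}$ and $\{X+Y = s_2\}$. Fix $s_1 < s_2$, set $\delta := s_2 - s_1$, and let $f_X, f_Y$ denote the log-concave densities of $X$ and $Y$. Parameterizing the conditional law of $(X, Y)$ given $X+Y=s$ by its first coordinate, the conditional density of $X$ is
\[
\mu_s(x) \,\propto\, f_X(x) \, f_Y(s - x).
\]
The aim is to construct, on a common probability space, random variables $X_1 \sim \mu_{s_1}$ and $X_2 \sim \mu_{s_2}$ satisfying $0 \le X_2 - X_1 \le \delta$ almost surely. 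Once this is achieved, setting $Y_i := s_i - X_i$ gives $(X_1, Y_1) \le (X_2, Y_2)$ componentwise; the monotonicity of $\Psi$ in each argument then yields $\Psi(X_1, Y_1) \le \Psi(X_2, Y_2)$, and taking expectations delivers the desired inequality.

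The coupling is produced from two monotone likelihood ratio (MLR) comparisons that exploit log-concavity of $f_Y$ and $f_X$ separately. Writing $\psi_X := (\log f_X)'$ and $\psi_Y := (\log f_Y)'$, both of which are nonincreasing by log-concavity, a direct computation gives
\[
\frac{d}{dx}\log\frac{\mu_{s_2}(x)}{\mu_{s_1}(x)} \,=\, \psi_Y(s_1 - x) - \psi_Y(s_2 - x) \,\ge\, 0,
\]
so $\mu_{s_2}/\mu_{s_1}$ is nondecreasing in $x$; consequently $\mu_{s_2}$ dominates $\mu_{s_1}$ in the MLR order, which in particular yields ordinary stochastic dominance $\mu_{s_1} \le_{\mathrm{st}} \mu_{s_2}$. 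Next, using the identity $s_2 - (x + \delta) = s_1 - x$ to cancel the $f_Y$ factors,
\[
\frac{d}{dx}\log\frac{\mu_{s_2}(x + \delta)}{\mu_{s_1}(x)} \,=\, \psi_X(x + \delta) - \psi_X(x) \,\le\, 0,
\]
so $\mu_{s_1}$ MLR-dominates the translate $\mu_{s_2}(\cdot + \delta)$, giving $\mu_{s_2}(\cdot + \delta) \le_{\mathrm{st}} \mu_{s_1}$, equivalently $X_2 - \delta \le_{\mathrm{st}} X_1$.

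The quantile coupling $X_i := F_{\mu_{s_i}}^{-1}(U)$ for a common $U \sim \mathrm{Uniform}[0,1]$ realizes both stochastic orderings simultaneously: the first gives $X_1 \le X_2$ and the second gives $X_2 - \delta \le X_1$, so $X_2 - X_1 \in [0, \delta]$ almost surely, as required. The main obstacle is recognizing that neither MLR bound alone suffices: upward MLR dominance only guarantees $X_1 \le X_2$, but without the complementary upper bound $\delta$, the increment $X_2 - X_1$ could exceed $\delta$ and force $Y_1 > Y_2$, destroying the componentwise ordering needed to apply the hypothesis on $\Psi$. It is precisely the symmetric use of the log-concavity of \emph{both} marginals that traps $X_2 - X_1$ inside $[0, \delta]$ and produces the required monotone coupling.
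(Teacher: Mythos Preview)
The paper does not prove this statement: Efron's monotonicity theorem is simply stated in Section~2.3 as a known result and attributed to~\cite{Efron1965}, with no argument given. So there is no ``paper's own proof'' to compare against.

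That said, your coupling argument is correct and is essentially one of the standard proofs of the result. The key observation---that log-concavity of $f_Y$ gives the MLR ordering $\mu_{s_1} \le_{\mathrm{MLR}} \mu_{s_2}$, while log-concavity of $f_X$ gives the complementary ordering $\mu_{s_2}(\cdot+\delta) \le_{\mathrm{MLR}} \mu_{s_1}$---is exactly right, and the quantile coupling then simultaneously realizes both stochastic dominations, trapping $X_2-X_1$ in $[0,\delta]$. One minor technical point: you write $\psi_X=(\log f_X)'$ and $\psi_Y=(\log f_Y)'$ and use their monotonicity, but log-concave densities need not be differentiable (e.g.\ the uniform or Laplace density). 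This is easily repaired: either argue via approximation by smooth log-concave densities, or bypass derivatives entirely by noting that for a log-concave $g$ the ratio $g(x+h)/g(x)$ is nonincreasing in $x$ for any fixed $h>0$, which is all you need for the two MLR comparisons.
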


\subsection{The discrete Gagliardo-Nirenberg-Sobolev inequality} \label{SectionDGNS}

We state below the discrete version of the standard Gagliardo-Nirenberg-Sobolev inequality in the torus. The proof can be deduced from the standard Gagliardo-Nirenberg inequality in bounded domain for which we refer to~\cite{Nirenberg59}.

\begin{proposition}[discrete Gagliardo-Nirenberg-Sobolev inequality on the torus] \label{propDGS}
Fix three exponents $\kappa , \lambda, \mu \in (1 , \infty)$ and let $\theta \in [0,1]$ be such that the relation
\begin{equation*}
    \frac{1}{\kappa} =  \theta \left( \frac{1}{\lambda} - \frac{1}{d} \right) + \frac{1-\theta}{\mu}
\end{equation*}
holds. Then there exists a constant $C := C(d , \kappa, \lambda, \mu, \theta) < \infty$ such that for any $L \geq 1$ and any function $f : \mathbb{T}_L \to \R$,
\begin{equation*}
    \left\| f \right\|_{\underline{L}^\kappa (\mathbb{T}_L)} \leq C L^\theta \left\| \nabla f \right\|_{\underline{L}^\lambda (\mathbb{T}_L)}^\theta \left\| f \right\|_{\underline{L}^\mu(\mathbb{T}_L)}^{1-\theta} + C  \left\| f \right\|_{\underline{L}^2(\mathbb{T}_L)}.
\end{equation*}
If $f : \mathbb{T}_L \to \R$ satisfies the additional assumption $\sum_{x \in \mathbb{T}_L} f(x) = 0$, then
\begin{equation*}
    \left\| f \right\|_{\underline{L}^\kappa(\mathbb{T}_L)} \leq C L^\theta \left\| \nabla f \right\|_{\underline{L}^\lambda(\mathbb{T}_L)}^\theta \left\| f \right\|_{\underline{L}^\mu(\mathbb{T}_L)}^{1-\theta}.
\end{equation*}
\end{proposition}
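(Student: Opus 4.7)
The plan is to reduce the discrete inequality to the classical Gagliardo--Nirenberg--Sobolev inequality on a Euclidean domain by constructing a piecewise multilinear extension of $f$ and then keeping careful track of the scaling factors that arise when translating unnormalized Lebesgue norms into normalized discrete norms.

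First I would identify the torus $\mathbb{T}_L$ with the continuous flat torus $\mathbb{T}_L^{\mathrm{cont}}:=(\R/(2L+1)\Z)^d$, and to a discrete function $f:\mathbb{T}_L\to\R$ associate its piecewise multilinear (tensorial $Q_1$) interpolant $\tilde f:\mathbb{T}_L^{\mathrm{cont}}\to \R$, obtained by interpolating $f$ on each closed unit cube with corners in the integer lattice. This extension has two elementary properties that I would verify by direct computation on a single unit cube: for every exponent $p\in[1,\infty]$,
\begin{equation*}
    c\sum_{x\in\mathbb{T}_L}|f(x)|^p \;\leq\; \int_{\mathbb{T}_L^{\mathrm{cont}}}|\tilde f|^p\,dx \;\leq\; C\sum_{x\in\mathbb{T}_L}|f(x)|^p,
\end{equation*}
and, since the weak gradient of a multilinear interpolant on a unit cube is a convex combination of the $d$ axis-aligned lattice differences at the corners,
\begin{equation*}
    c\sum_{e\in E(\mathbb{T}_L)}|\nabla f(e)|^p \;\leq\; \int_{\mathbb{T}_L^{\mathrm{cont}}}|\nabla \tilde f|^p\,dx \;\leq\; C\sum_{e\in E(\mathbb{T}_L)}|\nabla f(e)|^p,
\end{equation*}
with constants $c,C$ depending only on $d$ and $p$.

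Next I would invoke the continuous Gagliardo--Nirenberg--Sobolev inequality on the cube $Q_L=[-L-1,L+1]^d$ (applied to a suitable periodic cutoff or directly on $\mathbb{T}_L^{\mathrm{cont}}$, whose $d$-dimensional volume is $|Q|\asymp L^d$): under the scaling relation
$\tfrac{1}{\kappa}=\theta(\tfrac{1}{\lambda}-\tfrac{1}{d})+\tfrac{1-\theta}{\mu}$ one has
\begin{equation*}
    \|\tilde f\|_{L^\kappa(Q)}\;\leq\; C\,\|\nabla\tilde f\|_{L^\lambda(Q)}^{\theta}\,\|\tilde f\|_{L^\mu(Q)}^{1-\theta}+C\,\|\tilde f\|_{L^2(Q)},
\end{equation*}
which is Nirenberg's inequality on a bounded Lipschitz domain (the additive $L^2$ term is necessary in the absence of a vanishing-average hypothesis). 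Substituting the norm comparisons from the previous step and dividing both sides by $|Q|^{1/\kappa}$ to pass from $L^p$ to $\underline{L}^p$ norms, the right-hand side gains the factor
\begin{equation*}
    |Q|^{\theta/\lambda+(1-\theta)/\mu-1/\kappa}=|Q|^{\theta/d}\;\asymp\;L^{\theta},
\end{equation*}
by the GNS scaling relation, which produces exactly the factor $L^\theta$ appearing in the target inequality. This gives the first assertion.

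For the mean-zero case I would first observe that, because of the compatibility of the extension with sums, $\sum_{x}f(x)=0$ implies $\int_{Q}\tilde f\,dx=0$ (up to a boundary correction that vanishes after a small modification of the extension, or a Poincar\'e--Wirtinger argument). One can then apply the refined Nirenberg inequality on a bounded domain for zero-mean functions, in which the lower-order term $\|\tilde f\|_{L^2}$ can be absorbed thanks to the Poincar\'e inequality $\|\tilde f\|_{L^2(Q)}\leq CL\|\nabla\tilde f\|_{L^2(Q)}$ followed by interpolation between $L^\lambda$ and $L^\mu$, yielding the cleaner inequality without an additive term. The main obstacle, as usual, is bookkeeping: correctly tracking the exponents of $L$ arising from the conversion between normalized and unnormalized norms and ensuring the boundary/periodicity does not spoil the comparison between $\tilde f$ and $f$; all of this is elementary but must be done coherently to land precisely on $L^\theta$.
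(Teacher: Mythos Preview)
Your proposal is correct and follows precisely the route the paper indicates: the paper does not actually write out a proof but simply states that ``the proof can be deduced from the standard Gagliardo--Nirenberg inequality in bounded domain'' and refers to Nirenberg's original work. Your multilinear-interpolation reduction to the continuous inequality, together with the scaling computation $|Q|^{\theta/\lambda+(1-\theta)/\mu-1/\kappa}=|Q|^{\theta/d}\asymp L^\theta$, is exactly the standard way to make that deduction precise, so you are filling in details the paper omits rather than doing anything different.

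Two minor remarks that would tighten the write-up. First, on the torus the multilinear interpolant of a mean-zero lattice function has \emph{exactly} mean zero (each vertex contributes equally to the $2^d$ adjacent cubes), so no ``boundary correction'' is needed. Second, it is cleaner to apply the continuous inequality on the \emph{unit} torus and then rescale, rather than on $Q_L$ with an $L$-dependent constant: under $u(y)=v(Ly)$ one has $\|u\|_{L^p(\mathbb{T}^d)}=\|v\|_{\underline L^p(\mathbb{T}_L^{\mathrm{cont}})}$ and $\|\nabla u\|_{L^p}=L\|\nabla v\|_{\underline L^p}$, which immediately gives both the factor $L^\theta$ in the main term and the additive $C\|v\|_{\underline L^2}$ with no residual power of $L$, and also makes the mean-zero improvement transparent (it holds on the unit torus with a fixed constant, hence scales to the stated form).
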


In the proofs below, we will apply the discrete Gagliardo-Nirenberg-Sobolev inequality and the H\"{o}lder inequality with the following collections of exponents:
\begin{equation} \label{explambdakappanu}
    \lambda_d := \frac{2d+2}{d+2},~~ \kappa_d := \frac{d \lambda_d}{d - \lambda_d}, ~~ \sigma_d := \frac{2\kappa_d}{\kappa_d - 2} ~~ \mbox{and}~~ \tau_d = \frac{2\lambda_d}{ 2-\lambda_d},
\end{equation}
and
\begin{equation} \label{explambdakappanubis}
 \lambda'_d := \frac{2d+3}{d+2}, ~~\tau_d' :=  \frac{2\lambda_d'}{2-\lambda_d'} ~~\mbox{and}~~ \theta_d := \frac{2}{3}\frac{2d+3}{2d+2}.
\end{equation}
They are chosen so as to satisfy the following properties:
\begin{enumerate}
\item For any dimension $d \geq 2$, $1 < \lambda_d < 2 < \kappa_d < \infty$, and the Gagliardo-Nirenberg-Sobolev inequality can be applied with the exponents $\kappa = \kappa_d$, $\lambda = \lambda_d$ and $\theta = 1$ (and arbitrary $\mu$).
\item The pair of exponents $(\lambda_d , \tau_d)$ and $(\kappa_d , \sigma_d)$ are chosen so as to satisfy H\"{o}lder inequalities, and we have
\begin{equation*}
     \frac{1}{\tau_d} + \frac{1}{2}= \frac{1}{\lambda_d} ~~\mbox{and} ~~ \frac{1}{\kappa_d} + \frac{1}{\sigma_d} = \frac{1}{2}.
\end{equation*}
We also note that the following identities hold
\begin{equation} \label{eq:sigmataud}
    \frac{1}{\sigma_d} + \frac{1}{\tau_d} = \frac{1}{d} ~~ \mbox{and}~~ \frac{1}{\tau_d'} + \frac{1}{2} = \frac{1}{\lambda_d'} .
\end{equation}
\item For any function $f : \mathbb{T}_L \to \R$, one has the inequality
\begin{equation*}
    \left\| f \right\|_{L^{\kappa_d} (\mathbb{T}_L)} \leq C L^{\theta_d} \left\| \nabla f \right\|_{\underline{L}^{\lambda_d'} (\mathbb{T}_L)}^{\theta_d} \left\| f \right\|_{\underline{L}^2(\mathbb{T}_L)}^{1-\theta_d} + C \left\| f \right\|_{\underline{L}^2(\mathbb{T}_L)}.
\end{equation*}
Applying Young's inequality for product, we deduce that, for any $\ep \in (0,1]$,
\begin{equation*}
\left\| f \right\|_{\underline{L}^{\kappa_d} (\mathbb{T}_L)} \leq \ep L \left\| \nabla f \right\|_{\underline{L}^{\lambda_d'} (\mathbb{T}_L)} + C \ep^{-\frac{\theta_d}{1-\theta_d}} \left\| f \right\|_{\underline{L}^2(\mathbb{T}_L)}.
\end{equation*}
\end{enumerate}

\begin{remark}
The same inequalities hold on more general subsets than the torus, we will use it below in annuli of the form $A_r := \Lambda_{2r} \setminus \Lambda_{r}$ with $r \in \{ 1 , \ldots, \frac{L}{2}\}$ which can be seen as a subset of the torus (using the identification mentioned in Section~\ref{secgeneralnotation}). In this setting, we have, for any $f : A_r \to \R$ and any $\ep \in (0,1]$,
\begin{equation*}
\left\| f \right\|_{\underline{L}^{\kappa_d} (A_r)} \leq \ep r \left\| \nabla f \right\|_{\underline{L}^{\lambda_d'} (A_r)} + C \ep^{-\frac{\theta_d}{1-\theta_d}} \left\| f \right\|_{\underline{L}^2(A_r)}.
\end{equation*}
\end{remark}

\subsection{Maximal inequalities}

In this section, we recall some classical properties of maximal functions. We let $(\Omega , \mathcal{F} , \mathbb{P})$ be a probability space, and let $(\theta_x)_{x \in \Zd}$ be a measure preserving action of $\Zd$ on this space. For every measurable function $f : \Omega \to \R$, we define the maximal function
\begin{equation} \label{def.euclideanballs}
    M (f) := \sup_{r \in \N} \frac{1}{|\Lambda_r|} \sum_{x \in \Lambda_r} f(\theta_x \omega).
\end{equation}
We next record the $L^p$ maximal inequality, which can be obtained as a consequence of the weak type~$(1,1)$ estimate~\cite[Theorem 3.2]{AK81} with the Marcinkiewicz interpolation theorem (see~\cite[Appendix D]{taylor2006measure}). The result is stated and used in~\cite[Appendix A]{MO16}.

\begin{proposition}[$L^p$ Maximal inequality] \label{propmaximalineq}
For any $p \in ( 1 , \infty]$, there exists a constant $C := C(p,d) < \infty$ such that, for any $f \in L^p(\Omega)$,
\begin{equation*}
    \left\| M (f) \right\|_{L^p(\Omega)} \leq C  \left\| f \right\|_{L^p(\Omega)}.
\end{equation*} 
\end{proposition}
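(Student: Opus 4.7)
The plan is to deduce this from two endpoint estimates on the maximal operator $M$ and then interpolate. The endpoints are the weak type $(1,1)$ inequality and the strong type $(\infty,\infty)$ inequality, and Marcinkiewicz interpolation will bridge them to give all $p \in (1,\infty)$; the case $p = \infty$ being trivial.

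\emph{Step 1: the trivial $L^\infty$ bound.} For every $\omega$ and every $r \in \N$, the average $|\Lambda_r|^{-1}\sum_{x\in\Lambda_r} f(\theta_x\omega)$ is at most $\|f\|_{L^\infty(\Omega)}$, so taking the supremum over $r$ shows $\|M(f)\|_{L^\infty(\Omega)} \le \|f\|_{L^\infty(\Omega)}$. This is the endpoint $p = \infty$ bound and disposes of that case entirely (so we may take $C(\infty,d) = 1$).

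\emph{Step 2: the weak type $(1,1)$ inequality.} This is exactly the content of the Akcoglu--Krengel maximal ergodic theorem quoted in the paper as \cite[Theorem 3.2]{AK81}: there exists a constant $C_0 := C_0(d) < \infty$ such that, for every $f \in L^1(\Omega)$ and every $\lambda > 0$,
\begin{equation*}
    \P\bigl[ M(|f|) > \lambda \bigr] \le \frac{C_0}{\lambda}\, \|f\|_{L^1(\Omega)}.
\end{equation*}
I would simply invoke this result; its proof uses a covering argument tailored to the boxes $\Lambda_r$ and the measure-preserving action, and is standard in the ergodic-theoretic maximal inequality literature.

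\emph{Step 3: interpolation.} The operator $f \mapsto M(|f|)$ is sublinear and satisfies simultaneously a weak $(1,1)$ bound (Step 2) and a strong $(\infty,\infty)$ bound (Step 1). By the Marcinkiewicz interpolation theorem (see \cite[Appendix D]{taylor2006measure}), for every $p \in (1,\infty)$ there exists a constant $C := C(p,d) < \infty$ such that
\begin{equation*}
    \|M(f)\|_{L^p(\Omega)} \le C\,\|f\|_{L^p(\Omega)}, \qquad f \in L^p(\Omega),
\end{equation*}
which is the claimed inequality. The only mild point is that $M(f)$ as defined acts on $f$ itself rather than on $|f|$, but since $f \le |f|$ pointwise and the averages preserve this ordering, $M(f) \le M(|f|)$, so the bound on $M(|f|)$ transfers immediately to $M(f)$.

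The plan contains no real obstacle: both endpoint estimates are classical (one trivial, the other cited), and Marcinkiewicz interpolation is black-box. The only subtle point one might want to write out carefully is the covering argument underlying the weak $(1,1)$ bound of \cite{AK81}, but since the proposition explicitly refers to that result, there is no need to reproduce it.
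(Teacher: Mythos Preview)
Your proposal is correct and follows exactly the approach the paper itself indicates: the proposition is stated without a full proof, with the text noting it ``can be obtained as a consequence of the weak type~$(1,1)$ estimate~\cite[Theorem 3.2]{AK81} with the Marcinkiewicz interpolation theorem.'' Your three steps (trivial $L^\infty$ bound, the cited weak $(1,1)$ estimate, and Marcinkiewicz interpolation) are precisely this, and your remark on passing from $M(|f|)$ to $M(f)$ is a correct handling of the fact that the paper's definition of $M$ involves no absolute value.
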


\subsection{The anchored Nash estimate of Mourrat and Otto}

In this section, we record the anchored Nash estimate proved by Mourrat and Otto~\cite[Theorem 2.1]{MO16}. In the statement below and later, we will make use of the notation $|\cdot |_* = |\cdot | + 1.$

\begin{theorem}[Anchored Nash inequality, Theorem 2.1 of~\cite{MO16}] \label{theoremanchoredNash}
Let $p \in (d , \infty)$, $p' \in (d , \infty]$, and $\theta \in [\theta_c , 1]$, where $\theta_c \in [0,1)$ is defined by
\begin{equation} \label{def.thetac}
    \frac{1}{\theta_c} = 1 + \frac{dp+ 2p}{dp + 2d} \left( \frac{p'}{d} -1 \right).
\end{equation}
Define $\alpha, \beta , \gamma \in [0 , 1)$ by 
\begin{equation} \label{def.alphabetagamma}
    \alpha := (1-\theta) \frac{d}{d+2} + \theta \frac{p}{p+2}, ~~ \beta := (1 - \theta) \frac{2}{d+2}, ~~\mbox{and} ~~ \gamma := \theta \frac{2}{p+2}.
\end{equation}
There exists $C := C(d , p , q , \theta) < \infty$ such that, for any function $f : \Zd \to \R$, and $w : E(\Zd) \to (0 , \infty)$,
\begin{equation*}
    \left\| f \right\|_{L^2 \left( \Zd \right)} \leq C \left( M(w^{-p'})^{\frac{1}{p'}} \left\| w \nabla f \right\|_{L^2 \left( \Zd \right)} \right)^{\alpha} \left\| f \right\|_{L^1 \left( \Zd \right)}^\beta \| |x|^{p/2}_* f \|_{L^2 \left( \Zd \right)}^\gamma.
\end{equation*}
\end{theorem}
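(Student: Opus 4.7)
The proof proceeds by interpolating between two classical building blocks. The first is a weighted Gagliardo--Nirenberg--Sobolev inequality (corresponding to $\theta = 0$), while the second is a ``far-field'' estimate weighted by $|x|^{p/2}_*$ (corresponding to $\theta = 1$). The parameter $\theta$ then controls how the two scales are balanced, and the condition $\theta \geq \theta_c$ ensures that the balance occurs in an admissible regime (in particular, on scales at least of order~$1$).

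The first step is to apply a Gagliardo--Nirenberg--Sobolev inequality with the gradient measured in $L^s$ for the critical exponent $s = 2p'/(p'+2) \in (1,2)$. Such an inequality takes the form
\[
\| f \|_{L^2(\Z^d)} \leq C \| \nabla f \|_{L^s(\Z^d)}^{\theta_0} \| f \|_{L^1(\Z^d)}^{1-\theta_0},
\]
with $\theta_0$ determined by scaling. The choice of $s$ allows one to bring in the weight $w$ through H\"{o}lder's inequality with conjugate exponents $p'$ and $2$:
\[
\| \nabla f \|_{L^s(\Z^d)} \leq \| w^{-1} \|_{L^{p'}_{\mathrm{loc}}} \| w \nabla f \|_{L^2(\Z^d)},
\]
and the localized $L^{p'}$ norm of $w^{-1}$ is controlled by the maximal function $M(w^{-p'})^{1/p'}$ via Proposition~\ref{propmaximalineq}. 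To make sense of this ``localized $L^{p'}$ norm'' globally, the plan is to dyadically decompose $\Z^d$ into annuli and apply the maximal inequality on each scale before summing.

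The polynomial weight $|x|^{p/2}_*$ then enters through a truncation step. For any scale $R \geq 1$, the tail bound
\[
\sum_{|x|>R} f(x)^2 \leq R^{-p} \bigl\| |x|^{p/2}_* f \bigr\|_{L^2(\Z^d)}^2
\]
holds. Splitting $\| f \|_{L^2(\Z^d)}^2 = \| f \|_{L^2(\Lambda_R)}^2 + \| f \|_{L^2(\Z^d \setminus \Lambda_R)}^2$, applying the weighted estimate of the previous paragraph on the cube $\Lambda_R$, and then optimizing $R$ so as to balance $\| f \|_{L^1(\Z^d)}$ against $\bigl\| |x|^{p/2}_* f \bigr\|_{L^2(\Z^d)}$ yields the claimed multiplicative interpolation with exponents $\alpha, \beta, \gamma$.

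The main obstacle is partly a bookkeeping one: verifying that the exponent $s$, the truncation radius $R$, and the Gagliardo--Nirenberg scaling exponent $\theta_0$ combine to produce precisely the exponents defined in~\eqref{def.alphabetagamma}, and that the algebraic constraint $\theta \geq \theta_c$ in~\eqref{def.thetac} is exactly the condition ensuring the optimal $R$ is at least $1$ (so that the discrete estimate is valid). A more substantive analytic point is the justification of the weighted H\"{o}lder step: since $w^{-1}$ need not be globally in $L^{p'}$, the inequality $\| \nabla f \|_{L^s} \leq \| w^{-1} \|_{L^{p'}(\Z^d)} \| w \nabla f \|_{L^2}$ cannot be used directly; one must instead work scale by scale, invoking the $L^{p'}$ maximal inequality. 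This scale-by-scale truncation is what makes the result \emph{anchored} at the origin and explains why $M(w^{-p'})$ (rather than $\| w^{-1} \|_{L^{p'}(\Z^d)}$) appears in the bound.
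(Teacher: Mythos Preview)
The paper does not contain a proof of this statement: Theorem~\ref{theoremanchoredNash} is \emph{recorded} in Section~2.6 as a preliminary result quoted verbatim from Mourrat and Otto~\cite[Theorem~2.1]{MO16}, and the paper uses it as a black box (in the proof of Proposition~\ref{theoremanchoredNashtorus} and in Section~\ref{section:4.5}). There is therefore no ``paper's own proof'' to compare your proposal against.

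That said, your sketch is broadly in line with the argument of~\cite{MO16}: split at scale $R$, control the far field by the $|x|_*^{p/2}$-weighted $L^2$ norm, control the near field by a weighted Gagliardo--Nirenberg--Sobolev inequality in which H\"older with exponents $(p',2)$ introduces the $L^{p'}$ average of $w^{-1}$ over $\Lambda_R$, and then optimize over $R$. One point of confusion: you invoke Proposition~\ref{propmaximalineq} (the $L^p$ maximal inequality on a probability space) to control the ``localized $L^{p'}$ norm of $w^{-1}$'', but that is not what is happening here. The quantity $M(w^{-p'})$ in the statement is simply the spatial maximal function evaluated at the origin, i.e.\ $\sup_{r} \|w^{-1}\|_{\underline L^{p'}(\Lambda_r)}^{p'}$; it enters because after truncating to $\Lambda_R$ the H\"older step produces $\|w^{-1}\|_{\underline L^{p'}(\Lambda_R)}$, which is trivially dominated by $M(w^{-p'})^{1/p'}$ for every $R$. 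No ergodic-theoretic maximal inequality is needed for the proof of Theorem~\ref{theoremanchoredNash} itself; Proposition~\ref{propmaximalineq} is used later in the paper to control moments of $M(w^{-p'})$ when $w$ is random and stationary.
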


\begin{remark}
The statement of the maximal function $M(w^{-p'})$ is defined with respect to (Euclidean) balls in~\cite[Theorem 2.1]{MO16} and with boxes in~\eqref{def.euclideanballs}. The two statements are equivalent, but writing it with boxes will be convenient to state the periodic version of the result in Proposition~\ref{theoremanchoredNashtorus} below.
\end{remark}

\begin{remark}
    By~\cite[(3.7)]{MO16} (or explicit computations), we have $\alpha + \beta + \gamma = 1$ as well as the identity
    \begin{equation} \label{eq:3.7}
            \alpha - \frac{ (p-d)\gamma}{2} = \frac d2 \left( \beta + \gamma \right) = \frac d2 (1 - \alpha) ~~ \implies ~~ 1 - \frac{(p-d)\gamma }{2\alpha} =  -\frac d2 \left( 1 - \frac 1\alpha \right).
    \end{equation}
\end{remark}

\subsection{Stochastic integrability for random variables}

We collect the following elementary property regarding the stochastic integrability stochastic processes.

\begin{lemma} \label{lemmastochint}
Let $(X_t)_{t \geq 0}$ be a continuous stochastic process and assume that there exists two constants $C_0 < \infty$ and $c_0 > 0$ and an exponent $a \geq 1$ such that, for any $t \geq 0$ and any $K \geq 0$,
\begin{equation} \label{eq:Pastocest}
    \P \left( |X_t| \geq K \right) \leq C_0 \exp \left( - c_0 K^a \right).
\end{equation}
Then there exist two constants $c_1 := c_1(C_0 , c_0) >0$ and $C_1 := C_1(c_0 , C_0) < \infty$ such that for any nonnegative function $f : (0 , \infty) \to \R$ satisfying $\int_0^\infty f(x) \, dx = 1$, and for any $K \geq 0$,
\begin{equation} \label{eq:Pastocest2}
     \P \left( \int_0^\infty f(t) |X_t| \geq K \right) \leq C_1 \exp \left( - c_1 K^a  \right).
\end{equation}
\end{lemma}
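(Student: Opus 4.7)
The plan is to reduce the tail bound on $Y := \int_0^\infty f(t) |X_t|\, dt$ to a uniform stretched-exponential moment bound on the marginals of the process, via a standard Chernoff/Markov argument. The first step is to observe that the hypothesis~\eqref{eq:Pastocest} gives a uniform moment estimate
\begin{equation*}
\sup_{t \geq 0} \E \left[ \exp(\lambda |X_t|^a) \right] \leq C'(\lambda) < \infty, \quad \text{for every } \lambda \in (0 , c_0),
\end{equation*}
which I would obtain by writing the moment as the tail integral $1 + \int_0^\infty a \lambda s^{a-1} e^{\lambda s^a}\, \P(|X_t| \geq s)\, ds$ and substituting~\eqref{eq:Pastocest}; the resulting integrand decays like $e^{-(c_0 - \lambda)s^a}$ so the integral is finite and its bound depends only on $c_0$, $C_0$ and $\lambda$.

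The second step is to apply Jensen's inequality twice with respect to the probability measure $f(t)\, dt$ on $(0 , \infty)$. Since $a \geq 1$, the map $x \mapsto x^a$ is convex on $[0 , \infty)$, so
\begin{equation*}
Y^a = \left( \int_0^\infty f(t) |X_t|\, dt \right)^a \leq \int_0^\infty f(t) |X_t|^a\, dt.
\end{equation*}
Applying Jensen a second time to the convex function $\exp$ then yields
\begin{equation*}
\exp(\lambda Y^a) \leq \int_0^\infty f(t) \exp(\lambda |X_t|^a)\, dt.
\end{equation*}
Taking expectations and using Fubini--Tonelli together with the moment bound from the first step produces $\E[\exp(\lambda Y^a)] \leq C'(\lambda)$.

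The third and final step is Markov's inequality, which gives
\begin{equation*}
\P(Y \geq K) \leq e^{-\lambda K^a}\, \E[\exp(\lambda Y^a)] \leq C'(\lambda) \exp(-\lambda K^a),
\end{equation*}
and this is the desired estimate~\eqref{eq:Pastocest2} with $c_1 := \lambda$ and $C_1 := C'(\lambda)$ for any fixed $\lambda \in (0 , c_0)$. There is no serious technical obstacle here; the only subtle point is that the hypothesis $a \geq 1$ is used in an essential way to guarantee convexity of $x \mapsto x^a$ on $[0,\infty)$, which in turn legitimates the first Jensen step. The continuity assumption on $(X_t)_{t \geq 0}$ is convenient (it makes the integral $\int_0^\infty f(t) |X_t|\, dt$ and the application of Fubini unambiguous) but plays no deeper role.
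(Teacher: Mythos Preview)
Your proof is correct and follows essentially the same approach as the paper's: both arguments convert the tail hypothesis into a uniform exponential moment bound $\E[\exp(\lambda|X_t|^a)]\leq C'$, then use Jensen's inequality with respect to the probability measure $f(t)\,dt$ (you apply it twice to $x\mapsto x^a$ and then $\exp$, while the paper applies it once to the composite convex increasing map $x\mapsto \exp(c_1 x^a)$), and conclude by Markov.
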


\begin{proof}
The proof is based on an application of Jensen inequality. Assumption~\eqref{eq:Pastocest} implies that there exists two constants $c_1 := c_1(C_0 , c_0) >0$ and $C_1 := C_1(c_0 , C_0) < \infty$ such that, for any $t \geq 0$,
\begin{equation*}
    \E \left[ \exp \left( c_1 |X_t|^a \right) \right] \leq C_1.
\end{equation*}
Using the convexity and monotonicity of the map $x \mapsto \exp( c_1 x^a)$ on $[0 , \infty)$, we see that
\begin{equation*}
     \E \left[ \exp \left( c_1  \left(\int_0^\infty f(t) |X_t| \, dt \right)^a   \right) \right]  \leq \E \left[ \int_0^\infty f(t) \exp \left( c_1 |X_t|^a \right) \, dt  \right] \leq C_1,
\end{equation*}
from which we deduce the bound~\eqref{eq:Pastocest2}.
\end{proof}

\section{Fluctuation estimates for the Langevin dynamic} \label{Section3}

This section is devoted to the proofs of two properties of the Langevin dynamic. The first one provides a stochastic integrability estimate on the gradient of the Langevin dynamic, the second one provides a fluctuation estimate for the Langevin dynamic, arguing that it can only remain contained in a fixed interval for a long time with small probability.

\subsection{Stochastic integrability estimate for the discrete gradient of the field}

In this section, we establish stochastic integrability estimates on the gradient of the Langevin dynamic. We first prove in Section~\ref{sec3.1.1} that the tail of the distribution of the discrete gradient of a random surface distributed according to the measure $\mu_{\mathbb{T}_L}$ decays at least like $K \mapsto \exp (- c K^r)$ (where $r$ is the exponent of Assumption~\ref{assumption1} encoding the growth of $V$). We then transfer this stochastic integrability from the Gibbs measure to the Langevin dynamic in Section~\ref{fluctuationofgradphi}.

\subsubsection{Stochastic integrability for the Gibbs measure} \label{sec3.1.1}

\begin{proposition} \label{prop.stochintgradfield}
There exist two constants $c := c(d, V) > 0$ and $C := C(d , V) < \infty$ such that, for any $L \geq 1$ and any edge $e \in E \left( \mathbb{T}_L \right)$, if $\phi$ is a random surface sampled according to $\mu_{ \mathbb{T}_L}$, then
\begin{equation*}
    \P \left[ \left| \nabla \phi (e) \right| > K \right] \leq C \exp \left( - c K^{r} \right).
\end{equation*}
\end{proposition}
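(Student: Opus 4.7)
The plan is to analyze the marginal density of $g := \nabla \phi(e)$ under $\mu_{\mathbb{T}_L}$ by isolating the contribution of the single potential term $V(g)$ associated with the edge $e$. First I would parameterize $\Omega^\circ_{\mathbb{T}_L}$ by a pair $(g, \xi)$, where $g = \nabla \phi(e) \in \R$ and $\xi$ encodes the remaining $|\mathbb{T}_L|-2$ degrees of freedom compatible with the zero-sum constraint. Under this affine change of variables, each gradient $\nabla \phi(e')$ with $e' \neq e$ becomes an affine function of $(g, \xi)$, so the Hamiltonian decomposes as $V(g) + H(g, \xi)$ with $H(g, \xi) := \sum_{e' \neq e} V(\nabla \phi(e'))$ convex in both arguments. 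The Prekopa-Leindler inequality then yields the marginal density
\begin{equation*}
p(g) \propto \exp(-V(g)) F(g), \qquad F(g) := \int \exp(-H(g, \xi))\, d\xi,
\end{equation*}
with $F$ log-concave on $\R$.

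For the symmetric case $V(x) = V(-x)$ (following the short argument attributed to P.~Lammers in the acknowledgments), the global reflection $\phi \mapsto -\phi$ preserves both $\Omega^\circ_{\mathbb{T}_L}$ and $\mu_{\mathbb{T}_L}$, and sends $g \mapsto -g$, hence $F$ is even. I would compare $p$ to the one-dimensional reference $q_V(g) := \exp(-V(g))/Z_V$: the ratio $R(g) := p(g)/q_V(g)$ is proportional to $F(g)$, so symmetric and log-concave on $\R$, therefore unimodal with mode at $0$. The identity $\int (R-1)\, q_V\, dg = 0$ (since both $p$ and $q_V$ integrate to $1$), combined with the observation that the super-level set $\{R > 1\}$ is a symmetric interval $(-K_0, K_0)$, yields the tail domination
\begin{equation*}
\P_\mu[|g| > K] = \int_{|g| > K} R(g) q_V(g)\, dg \leq \int_{|g| > K} q_V(g)\, dg = \P_{q_V}[|g| > K].
\end{equation*}
Indeed, for $K \leq K_0$ one uses that $R - 1 \geq 0$ on the complementary region $\{|g| \leq K\}$, and for $K > K_0$ that $R - 1 \leq 0$ on $\{|g| > K\}$; both cases give the inequality. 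The bound $\P_{q_V}[|g| > K] \leq C \exp(-cK^r)$ is then immediate from Assumption~\ref{assumption1}.

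For general (possibly asymmetric) $V$, $F$ is log-concave but no longer symmetric, so the clean domination above fails. I would close the gap using Efron's monotonicity theorem (Section~2.3) together with two additional ingredients: (i) $\E_{\mu_{\mathbb{T}_L}}[\nabla \phi(e)] = 0$, which follows from translation invariance of the torus and the telescoping identity $\sum_{e \in E_i} \nabla \phi(e) = 0$ for edges in a fixed direction; and (ii) the fact that $g \mapsto H(g, \xi)$ grows convexly at rate $|g|^r$ uniformly in $\xi$, since the $2(2d-1)$ edges adjacent to $e$ contribute $V$-terms whose arguments are shifted by $\pm g/2$. Efron's theorem then bounds the effective shift of the mode of $p$ induced by the asymmetry of $V$, reducing the problem to the symmetric case up to a bounded, $V$-dependent correction. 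The main obstacle is precisely this asymmetric extension: the symmetric-case argument crucially uses that $F$ is even, and replacing this symmetry with Efron's monotonicity requires careful bookkeeping to ensure that the effective shift of $g$ stays uniformly bounded, a step for which the specific growth $r$ in Assumption~\ref{assumption1} and the mean-zero condition interact nontrivially.
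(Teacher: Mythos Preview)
Your symmetric-case argument is correct and clean: the factorization $p(g)\propto e^{-V(g)}F(g)$ with $F$ even and log-concave gives unimodality of $p/q_V$ at $0$ and hence tail domination of $|\nabla\phi(e)|$ by the explicit reference law $q_V$.

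For the asymmetric case you have a genuine gap, which you correctly flag but do not close. Your ingredients (i)--(ii) do not control where $F$ concentrates: the identity $\E_\mu[\nabla\phi(e)]=0$ constrains the product $e^{-V}F$, not $F$ itself, and the growth of $g\mapsto H(g,\xi)$ is not uniform in $\xi$ (the affine shifts at the adjacent edges can be arbitrarily large). Your invocation of Efron remains schematic.

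The paper's proof takes a different route that handles both cases at once. It splits $e^{-V(g)}=e^{-V(g)/2}\cdot e^{-V(g)/2}$ and identifies the law of $\nabla\phi(e)$ with that of an independent variable $Y$ (density $\propto e^{-V/2}$) conditioned on $\{Y=\nabla\phi^e(e)\}$, where $\nabla\phi^e(e)$ is the $e$-gradient under the modified Gibbs measure $\mu^e_{\mathbb{T}_L}$ with halved potential at $e$ (so that its marginal density is $\propto e^{-V(g)/2}F(g)$). Efron's theorem, applied to the independent log-concave pair $(Y,\nabla\phi^e(e))$ with the monotone test function $x\mapsto e^{c_3 x^r}\indc_{\{x\geq 0\}}$, then gives
\[
\E\!\left[e^{c_3(\nabla\phi(e))^r}\indc_{\{\nabla\phi(e)\geq 0\}}\right]\;\leq\;\frac{1}{\P[Y\geq\nabla\phi^e(e)]}\,\E\!\left[e^{c_3 Y^r}\indc_{\{Y\geq 0\}}\right].
\]
The expectation on the right is finite by the explicit density of $Y$; the substantive work is bounding the probability in the denominator away from zero, which reduces to showing $\E[|\nabla\phi^e(e)|^2]\leq C$ uniformly in $L$. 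This is not automatic---the potential at $e$ has been weakened---and the paper proves it via an energy estimate for the Langevin dynamics of $\mu_{\mathbb{T}_L}$ and $\mu^e_{\mathbb{T}_L}$ coupled through the same Brownian motions, together with an a priori bound $\E[|V'(\nabla\phi(e))|^2]\leq C$ obtained from the integration-by-parts identity $\E[\phi(x)\,\nabla\cdot V'(\nabla\phi)(x)]=-(|\mathbb{T}_L|-1)/|\mathbb{T}_L|$. This second-moment/coupling step is the key idea missing from your sketch.
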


We present below a proof of this proposition based on the Efron's monotonicity theorem for log-concave measure and a coupling argument (originally due to Funaki and Spohn~\cite{FS}) for the Langevin dynamic. We mention that, in the case when the potential $V$ is symmetric (i.e., $V(x) = V(-x)$ for all $x \in \R$), an alternative approach, relying on reflection positivity in the form of the chessboard estimate (following~\cite{MilosPeled2015} and~\cite[Lemma 3.9]{magazinov2020concentration}), would yield the same result.

\begin{proof}
We first prove the upper bound: there exists a constant $C := C(d , V) <\infty$ such that, for any $L \in \N$, any $e \in E \left( \mathbb{T}_L\right)$, if we let $\phi$ be a random surface sampled according to $\mu_{\mathbb{T}_L}$, then
\begin{equation} \label{eq:26120827}
    \E \left[ \left| \nabla \phi(e) \right|^2 \right] + \E \left[ \left| V' \left(\nabla \phi(e) \right) \right|^2 \right]  \leq C.
\end{equation}
The proof of the inequality~\eqref{eq:26120827} is based on the following identity: for any $x \in \mathbb{T}_L$,
\begin{equation} \label{eq:08312612}
    \E \left[ \phi(x) \nabla \cdot V'(\nabla  \phi)(x) \right] = - \frac{\left| \mathbb{T}_L \right| -1 }{\left| \mathbb{T}_L \right|}.
\end{equation}
To prove the identity~\eqref{eq:08312612}, we use the following result: for any probability density $f : \R^n \to [0 , \infty)$ which is continuously differentiable, such that $|y|f(y)$ tends to $0$ at infinity and $y \to (1 + |y|) \nabla f(y)$ is integrable, and for any index $i \in \{ 1 , \ldots , n \}$,
\begin{equation*}
    \int_{\R^n} y_i \frac{d f}{d y_i} (y) \, dy = -1.
\end{equation*}
Applying this result when the underlying space is $\Omega^\circ_{ \mathbb{T}_L}$ and noting that the function $\delta_x - \frac{1}{\left| \mathbb{T}_L \right|} \in \Omega^\circ_{ \mathbb{T}_L}$ has an $L^2(\mathbb{T}_L)$-norm equal to $(\frac{\left| \mathbb{T}_L \right|-1}{\left| \mathbb{T}_L \right|})^{\frac 12}$, we obtain
\begin{equation*}
    \frac{\left| \mathbb{T}_L \right|}{\left| \mathbb{T}_L \right| -1} \int_{\Omega^\circ_{ \mathbb{T}_L}} \phi(x) \nabla \cdot V'(\nabla  \phi)(x) \mu_{\mathbb{T}_L} (d\phi) = -1,
\end{equation*}
which is the identity~\eqref{eq:08312612}. Summing the inequality~\eqref{eq:08312612} over the vertices $x \in \mathbb{T}_L$ and performing a discrete integration by parts, we deduce that
\begin{equation*}
    \E \left[  \sum_{e' \in E \left(\mathbb{T}_L\right)} V'(\nabla \phi(e')) \nabla \phi(e') \right] = \left| \mathbb{T}_L \right| -1.
\end{equation*}
Using Assumption~\ref{assumption1} on the potential $V$, we see that the previous inequality implies
\begin{equation*}
    \E \left[  \sum_{e' \in E \left(\mathbb{T}_L \right)} \left| \nabla \phi(e') \right|^2 \right] \leq C \left| \mathbb{T}_L \right|.
\end{equation*}
Using that the spatial stationarity of the distribution $\mu_{\mathbb{T}_L}$ (since we consider the Gibbs measure $\mu_{\mathbb{T}_L}$ in the torus), we deduce that, for any edge $e \in \mathbb{T}_L$,
\begin{equation*}
    \E \left[ \left| \nabla \phi(e) \right|^2 \right] \leq \frac{C}{\left| \mathbb{T}_L \right|}  \E \left[  \sum_{e' \in E \left(\mathbb{T}_L\right)} \left| \nabla \phi(e') \right|^2 \right] \leq C.
\end{equation*}
We next note that, since the Gibbs measure $\mu_{\mathbb{T}_L}$ is log-concave, the Pr\'ekopa-Leindler inequality~\cite{prekopa1971logarithmic, prekopa1973logarithmic, leindler1972certain} implies that the distribution of the random variable $\nabla \phi
(e)$ is also log-concave. This implies that the tail of its distribution decays exponentially fast on the scale of its standard deviation, and thus all the moments of $\nabla \phi(e)$ are bounded uniformly in $L$. In particular, since the map $V'$ grows at most like a polynomial, we obtain the bound~\eqref{eq:26120827}. We then fix an edge $e \in E \left( \mathbb{T}_L \right)$ and introduce the collection of potentials~$(V_{e'})_{e' \in E(\mathbb{T}_L)}$
\begin{equation*}
    V_{e'} (x) := \left\{ \begin{aligned}
    V(x) &~\mbox{if}~ e' \neq e, \\
    \frac{V(x)}{2}  &~\mbox{if}~ e' = e. \\
    \end{aligned} \right.
\end{equation*}
We then denote by $\phi^e : \mathbb{T}_L \to \R$ be a random surface distributed according to the Gibbs measure
\begin{equation} \label{eq:09310312}
    \mu^e_{\mathbb{T}_L}(d \phi) := \frac{1}{Z_{\mathbb{T}_L}^e} \exp \left( - \sum_{e' \in E \left( \mathbb{T}_L \right)} V_{e'} \left( \nabla \phi(e') \right) \right)  d \phi.
\end{equation}
Since the measure~\eqref{eq:09310312} is log-concave, the random variable $\nabla \phi^e(e)$ is also log-concave. We next prove the following estimate: there exists a constant $C := C(d , V) < \infty$ such that
\begin{equation} \label{eq:10110312}
    \E \left[\left| \nabla \phi^e(e) \right|^2  \right] \leq C.
\end{equation}
The proof of~\eqref{eq:10110312} is based on a coupling argument for Langevin dynamic. To this end, we introduce the Langevin dynamic associated with the measure $\mu^e_{\mathbb{T}_L}$, i.e.,
    \begin{equation} \label{eq:21252711}
    \left\{ \begin{aligned}
    d \phi_L^e (t , x) &= \nabla \cdot V'_{e'}(\nabla \phi_L^e) (t , x) dt + \sqrt{2} d B_t(x) &~\mbox{for}~& (t , x) \in [0 , \infty] \times \mathbb{T}_L, \\
    \phi_L^e(0 , x) &= \phi^e(x) &~\mbox{for}~& x \in \mathbb{T}_L,
    \end{aligned} \right.
\end{equation}
where the initial data $\phi^e$ is distributed according to the measure $\mu^e_{\mathbb{T}_L}$ and is independent of the Brownian motions. We note that, as it was the case for~\eqref{def.Langevindynamics}, the process $\nabla \phi^e_L$ is stationary with respect to the time translations. We next couple the dynamic~\eqref{eq:21252711} to the one of~\eqref{def.Langevindynamics} by assuming that they are driven by the same Brownian motions and that the initial conditions $\phi^e$ and $\phi$ are independent. Subtracting the two dynamics, we observe that the difference $u := \phi_L - \phi^e_L$ solves the parabolic equation
\begin{equation} \label{eq:12390312}
    \partial_t u (t , x)- \nabla \cdot \a_e \nabla u (t , x) = \nabla \cdot \left[ \left(V_{e'}' - V'\right)(\nabla \phi_L) \right] (t , x) \hspace{5mm} \mbox{for} ~ (t , x) \in [0 , \infty] \times \mathbb{T}_L,
\end{equation}
with the definition
\begin{equation*}
    \a_e(t , e') :=
    \int_0^1 V_{e'}''(s \nabla \phi_L(t , e') + (1-s) \nabla \phi_L^e(t , e')) \, ds. 
\end{equation*}
Noting that the potentials $V_{e'}$ and $V$ are only different at the edge $e$, we may use an energy estimate on the equation~\eqref{eq:12390312} and obtain, for any $T \geq 0$,
\begin{equation} \label{eq:13110312}
    \int_{0}^T \sum_{e' \in E \left( \mathbb{T}_L \right)} \a_e(t , e') \left| \nabla u(t ,e') \right|^2 \, dt \leq C \int_0^T \left| V' \left(\nabla \phi_L(t , e) \right) \nabla u(t,e) \right| \, dt + C \sum_{x \in \mathbb{T}_L}  \left| u(0 , x) \right|^2.
\end{equation}
The inequality~\eqref{eq:13110312} implies the following (weaker) estimate
\begin{equation*}
    \int_{0}^T \a_e(t , e) \left| \nabla u(t ,e) \right|^2 \leq \int_0^T \left| V' \left(\nabla \phi_L(t , e) \right) \nabla u(t,e) \right| \, dt + \sum_{x \in  \mathbb{T}_L}  \left| u(0 , x) \right|^2.
\end{equation*}
Assumption~\ref{assumption1} on the potential $V$ implies that there exists a constant $C := C(V) < \infty$ such that
\begin{equation} \label{eq:13100312}
    \a_e(t , e) \left| \nabla u(t ,e) \right|^2 \geq  \left| \nabla u(t ,e) \right|^2 - C.
\end{equation}
Substituting~\eqref{eq:13100312} into~\eqref{eq:13110312} and applying the Cauchy-Schwarz inequality, we deduce that
\begin{equation*}
    \int_{0}^T \left| \nabla u(t ,e) \right|^2 \, dt  \leq C T  + C \int_0^T V'( \nabla \phi_L(t,e))^2 \, dt + C \sum_{x \in \mathbb{T}_L }  \left| u(0 , x) \right|^2.
\end{equation*}
Using the definition $u := \phi_L - \phi^e_L$, we thus obtain
\begin{equation*}
    \int_{0}^T \left| \nabla \phi_L^e(t ,e) \right|^2 \, dt \leq  C T  + C \int_0^T \left( V'( \nabla \phi_L(t,e))^2 + \left|\nabla \phi_L(t,e)\right|^2\right) \, dt + C \sum_{x \in \mathbb{T}_L }  \left| u(0 , x) \right|^2.
\end{equation*}
Taking the expectation in both sides of the previous inequality, and using the stationarity of the gradients $\nabla \phi_L$ and $\nabla \phi_L^e$, we deduce that, for any $T > 0$,
\begin{equation*}
    \E \left[\left| \nabla \phi^e_L(0 ,e) \right|^2  \right] \leq C + C \E \left[ V'( \nabla \phi_L(0 , e) )^2  + \left|\nabla \phi_L(0,e)\right|^2 \right] + \frac{C}{T} \sum_{x \in \mathbb{T}_L}  \E \left[ \left| u(0 , x) \right|^2 \right].
\end{equation*}
Taking the limit $T \to \infty$ and using the bound~\eqref{eq:26120827} completes the proof of~\eqref{eq:10110312}.

We next let $Y$ be a real-valued random variable whose law is given by
\begin{equation*}
    \mu_Y := \frac{1}{Z_Y} \exp \left( - \frac{1}{2} V \left( y \right) \right) dy \hspace{5mm} \mbox{with} \hspace{5mm} Z_Y := \int_\R \exp \left( - \frac{1}{2} V \left( y \right) \right) dy.
\end{equation*}
We couple the random variables $Y$ and $\phi^e$ by assuming that they are independent. Using that the law of the random variable $Y$ is explicit, the independence of $Y$ and $\nabla \phi^e(e)$ and the bound~\eqref{eq:10110312}, we deduce that there exists a constant $c := c(d , V) > 0$ such that
\begin{align} \label{eq:15210312}
    \P \left[ Y \geq \nabla \phi^e(e) \right] & \geq \P \left[ \left\{ Y \geq 2 \E \left[\left| \nabla \phi^e(e) \right|  \right]  \right\} \cap \left\{ \nabla \phi^e(e) \leq 2 \E \left[\left| \nabla \phi^e(e) \right|  \right]   \right\} \right] \\
    & = \P \left[ \left\{ Y \geq 2 \E \left[\left| \nabla \phi^e(e) \right|  \right]  \right\} \right)  \P \left(\left\{ \nabla \phi^e(e) \leq 2 \E \left[\left| \nabla \phi^e(e) \right|  \right]   \right\} \right] \notag \\
    & \geq c. \notag
\end{align}
We next rely on the observation that the law of $\nabla \phi(e)$ (where $\phi$ is distributed according to the measure~$\mu_{\mathbb{T}_L}$) is equal to the law of the random variable $Y$ conditionally on the event $\left\{ Y - \nabla \phi^e(e) = 0 \right\}$. This property is a consequence of the following observation: if $X$ and $Z$ are two independent real-valued random variables with bounded continuous densities $f$ and $g$ then the law of $X$ conditionally on the event $\{ X - Z = 0\}$ has a density proportional to the function $fg$. In particular, for any non-negative function $F : \R \to [0 , \infty)$, one has the identity
\begin{equation} \label{eq:03121520}
    \E \left[ F(\nabla \phi(e)) \right] = \E \left[ F(Y) \,  | \,  Y - \nabla \phi^e(e) = 0 \right].
\end{equation}
We then introduce the constant $c_3 := \frac{c_- }{4r (r-1)} > 0$ (where $c_-$ is the constant appearing in Assumption~\ref{assumption1}) and the function
\begin{equation*}
    F(x) := \left\{ \begin{aligned}
    0 &~\mbox{if}~ x \leq  0, \\
    \exp \left( c_3 x^r \right) &~\mbox{if}~ x \geq 0.
    \end{aligned} \right.
\end{equation*}
Assumption~\ref{assumption1} on the potential $V$ implies that there exists a constant $C := C(V) < \infty$ such that
\begin{equation} \label{eq:15190312}
    \E \left[ F(Y) \right] = \frac{1}{Z_Y} \int_\R F(y) \exp \left( - \frac{1}{2} V(y)\right) \, dy \leq C.
\end{equation}
We then note that the Efron's monotonicity theorem applied to the pair of independent random variables $(Y , \nabla \phi^e)$, the nonnegativity and monotonicity of the function $F$ imply the almost sure inequality
\begin{equation} \label{eq:15400312}
    \E \left[ F(Y) \,  | \,  Y - \nabla \phi^e(e) = 0 \right] \indc_{\left\{ Y - \nabla \phi^e(e) \geq 0 \right\}} \leq \E \left[ F(Y) \,  | \,  Y - \nabla \phi^e(e) \right].
\end{equation}
Combining the bound~\eqref{eq:15190312} with the identity~\eqref{eq:15190312}, the lower bound~\eqref{eq:15210312} and the inequality~\eqref{eq:15400312} yields the existence of a constant $C := C(d , V) < \infty$ such that
\begin{align} \label{eq:15420312}
    \E \left[ F(\nabla \phi(e) ) \right] & = \E \left[ F(Y) \,  | \,  Y - \nabla \phi^e(e) = 0 \right] \\
    & \leq \frac{1}{\P \left( Y - \nabla \phi^e(e) \geq 0 \right)} \E \left[ \E \left[ F(Y) \,  | \,  Y - \nabla \phi^e(e) \right] \right] \notag \\
    & \leq \frac{1}{\P \left( Y - \nabla \phi^e(e) \geq 0 \right)} \E \left[ F(Y)\right] \notag \\
    & \leq C. \notag
\end{align}
The inequality~\eqref{eq:15420312} implies that there exist two constants $C := C(d , V) < \infty$ and $c := c(d , V) > 0$ such that, for any $K \geq 1$,
\begin{equation} \label{eq:15490312}
    \P \left[  \nabla \phi (e) > K \right] \leq C \exp \left( - c K^{r} \right).
\end{equation}
The same argument can be applied with the potential $\tilde V(x) :=  V(-x)$ to obtain the upper bound, for any $K \geq 1$,
\begin{equation} \label{eq:15500312}
    \P \left[  \nabla \phi (e) < - K \right] \leq C \exp \left( - c K^{r} \right).
\end{equation}
Combining~\eqref{eq:15490312} and~\eqref{eq:15500312} completes the proof of Proposition~\ref{prop.stochintgradfield}.
\end{proof}

\subsubsection{Stochastic integrability for the Langevin dynamic} \label{fluctuationofgradphi}

In this section, we extend the result of the previous section to the Langevin dynamic (using essentially a union bound and the stationarity of the dynamic).

\begin{proposition} \label{propositionsubdynamic}
There exist two constants $c := c(d , V) > 0$ and $C := C(d , V) < \infty$ such that, for any $T \geq 1$ and any $K \geq 1$,
\begin{equation} \label{eq:13510512}
    \P \left[ \sup_{t \in [0 , T]}  \left| \nabla \phi_L (t , e) \right| \geq K \right] \leq C T \exp \left( - c K^r \right).
\end{equation}
\iffalse 
Consequently, there exist two constants $C := C(d , V) > 0$ and $c := c(d , V) > 0$ such that, for any $T \geq 2$ and any $K \geq C (\ln T)^{\frac 1r}$,
\begin{equation} \label{eq:13520512}
    \P \left[ \sup_{t \in [0 , T]}  \left| \nabla \phi_L (t , e) \right| \geq K \right] \leq C \exp \left( - c K^r \right).
\end{equation}
\fi
\end{proposition}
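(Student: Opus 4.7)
I would prove Proposition~\ref{propositionsubdynamic} by a time-discretization argument that reduces the supremum bound to the stationary estimate of Proposition~\ref{prop.stochintgradfield}. By the stationarity of the gradient process noted in Section~\ref{section2.2}, for every fixed $t \geq 0$ the random variable $\nabla \phi_L(t, e)$ has the same law as $\nabla \phi(e)$ under $\mu_{\mathbb{T}_L}$, so Proposition~\ref{prop.stochintgradfield} gives the pointwise-in-time tail $\P[|\nabla \phi_L(t, e)| \geq K] \leq C \exp(-cK^r)$.

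To upgrade this to the supremum, I would split $[0, T]$ into $N = \lceil T/\delta \rceil$ intervals of length $\delta = \delta(K) \in (0, 1]$ and bound
\[
\P\Bigl[\sup_{t \in [0, T]} |\nabla \phi_L(t, e)| \geq K\Bigr] \leq \sum_{n=0}^{N} \P\bigl[|\nabla \phi_L(n\delta, e)| \geq K/2\bigr] + \sum_{n=0}^{N-1} \P\Bigl[\sup_{t \in [n\delta, (n+1)\delta]} |\nabla \phi_L(t, e) - \nabla \phi_L(n\delta, e)| \geq K/2 \Bigr].
\]
Stationarity and Proposition~\ref{prop.stochintgradfield} give $\leq C(T/\delta) \exp(-c(K/2)^r)$ for the first sum. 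For the oscillation sum, I would use the SDE satisfied by $X(t) := \nabla \phi_L(t, e)$ with $e = (x, y)$, whose drift $\mu(t) = \nabla \cdot V'(\nabla \phi_L)(t, y) - \nabla \cdot V'(\nabla \phi_L)(t, x)$ is bounded under Assumption~\ref{assumption1} by $C(1 + \max_{e' \cap e \neq \emptyset} |\nabla \phi_L(t, e')|^{r-1})$, and whose martingale part is $\sqrt{2}(B_\cdot(y) - B_\cdot(x))$.

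I would then choose $\delta \sim K^{-(r-2)}$, so that the Brownian contribution over each length-$\delta$ interval has Gaussian tails of order $\exp(-cK^2/\delta) = \exp(-cK^r)$. The drift contribution over the same interval is at most $\delta \cdot CK^{r-1} \lesssim K$ on the a priori event that the gradients along neighboring edges remain bounded by a multiple of $K$, which can be controlled using Proposition~\ref{prop.stochintgradfield} applied to each adjacent edge together with Lemma~\ref{lemmastochint} and a stopping-time argument that localizes on the first exit from a larger threshold. The polynomial factor $T/\delta \sim TK^{r-2}$ from the union bound is then absorbed into the stretched exponential $\exp(-cK^r)$ at the cost of a small loss in the constant, yielding the bound $CT\exp(-c'K^r)$.

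The main obstacle is the oscillation estimate: since $r > 2$, the Gaussian tails of the Brownian forcing $\sqrt{2}(B_\cdot(y) - B_\cdot(x))$ are fatter than the target $\exp(-cK^r)$ on unit-length intervals, so the discretization must be refined to the precisely tuned scale $\delta \sim K^{-(r-2)}$, and the nonlinear drift must be controlled through a stopping-time argument that handles the coupling with gradients on nearby edges via Proposition~\ref{prop.stochintgradfield}.
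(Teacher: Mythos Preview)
Your strategy is essentially the same as the paper's: discretize $[0,T]$, bound the grid-point values by stationarity and Proposition~\ref{prop.stochintgradfield}, and bound the oscillation on each subinterval by splitting the SDE into its drift and Brownian parts. The paper takes $\delta = K^{-r}$ rather than your $\delta \sim K^{-(r-2)}$; both choices work (yours is in fact the minimal scale that makes the Brownian tail match $\exp(-cK^r)$, while the paper's finer grid simply produces extra slack that is absorbed anyway).

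The one place where your outline is weaker than the paper is the drift control. You phrase it as ``on the a priori event that the gradients along neighboring edges remain bounded by a multiple of $K$'' and invoke a stopping-time argument to localize on the first exit from a larger threshold. As written this is circular: controlling $\sup_{t}\max_{e'\cap e\neq\emptyset}|\nabla\phi_L(t,e')|$ is exactly the statement you are proving (on neighboring edges), and a stopping-time bootstrap would require an additional iteration in the threshold (e.g.\ $K\to 2K\to 4K\to\cdots$) to close. The paper avoids this entirely: it simply observes that $|\nabla(\nabla\cdot V'(\nabla\phi_L))(s,e)|\leq C+\sum_{e'\cap e\neq\emptyset}|\nabla\phi_L(s,e')|^{r-1}$, and that for each fixed $s$ the summand has tail $\exp(-cK^{r/(r-1)})$ by Proposition~\ref{prop.stochintgradfield} and stationarity. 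Lemma~\ref{lemmastochint} (applied with $a=r/(r-1)$ and $f=\delta^{-1}\indc_{[n\delta,(n+1)\delta]}$) then gives directly
\[
\P\Bigl[\int_{n\delta}^{(n+1)\delta}|\nabla(\nabla\cdot V'(\nabla\phi_L))(s,e)|\,ds\geq \tfrac{K}{4}\Bigr]\leq C\exp\bigl(-c\,(K/\delta)^{r/(r-1)}\bigr)=C\exp(-cK^r),
\]
with your choice of $\delta$. No stopping time or a priori event is needed; you already named the right lemma, you just did not need the extra machinery around it.
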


\begin{proof}
Fix $K \geq 1$ and let $N :=  K^{r}$. We have the inclusion of events
\begin{multline} \label{eq:11580512}
    \left\{ \sup_{t \in [0 , T]}  \left| \nabla \phi_L (t , e) \right| \geq K  \right\} \subseteq \left\{ \sup_{n \in \left\{ 0 , \ldots, \lfloor T N \rfloor \right\}} \left| \nabla \phi_L \left(\frac{n}{N} , e \right) \right| \geq \frac{K}{2} \right\} \\ 
    \bigcup \left\{ \sup_{n \in \left\{ 0 , \ldots, \lfloor T N \rfloor \right\}} \sup_{t \in \left[ \frac{n}{N} , \frac{n+1}{N} \right]} \left| \nabla \phi_L \left(t , e \right) - \nabla \phi_L \left(\frac{n}{N} , e \right) \right| \geq \frac{K}{2} \right\}.
\end{multline}
We then bound the probabilities of the two terms in the right-hand side separately. For the first one, a union bound, Proposition~\ref{prop.stochintgradfield}, and the identity $N :=   K^{r} $ yield
\begin{align} \label{eq:13540512}
    \P \left[ \sup_{n \in \left\{ 0 , \ldots, \lfloor T N \rfloor \right\}} \left| \nabla \phi_L \left(\frac{n}{N} , e \right) \right| \geq \frac{K}{2} \right] & \leq  \sum_{n = 0}^{\lfloor T N \rfloor} \P \left[  \left| \nabla \phi_L \left(\frac{n}{N} , e \right) \right| \geq \frac{K}{2} \right] \\
    & \leq C K^r T \exp \left( - c K^r \right) \notag \\
    & \leq  C T \exp \left( - c K^r \right), \notag
\end{align}
where we reduced the value of the constant $c$ in the third line to absorb the polynomial factor $K^r$. For the second term in the right-hand side of~\eqref{eq:11580512}, we first fix $n \in \left\{ 0 , \ldots, \lfloor T N \rfloor \right\}$ and use the definition of the Langevin dynamic~\eqref{def.Langevindynamics} to write
\begin{equation*}
     \nabla \phi_L \left(t , e \right) - \nabla \phi_L \left(\frac{n}{N} , e \right) = \int_{\frac{n}{N}}^t \nabla \left( \nabla \cdot V'(\nabla \phi_L) \right) (s , e) \, ds + \nabla B_{t}(e) - \nabla B_{\frac{n}{N}}(e).
\end{equation*}
This implies
\begin{multline} \label{eq:13340512}
    \sup_{t \in \left[ \frac{n}{N} , \frac{n+1}{N} \right]} \left| \nabla \phi_L \left(t , e \right) - \nabla \phi_L \left(\frac{n}{N} , e \right) \right| \\ \leq \int_{\frac{n}{N}}^{\frac{n+1}{N}} \left| \nabla \left( \nabla \cdot V'(\nabla \phi_L) \right) (s , e) \right| \, ds +  \sup_{t \in \left[ \frac{n}{N} , \frac{n+1}{N} \right]} \left| \nabla B_{t}(e) - \nabla B_{\frac{n}{N}}(e) \right|.
\end{multline}
Using the definition of the discrete gradient and Assumption~\ref{assumption1} on the potential $V$, we see that
\begin{equation*}
    \left| \nabla \left( \nabla \cdot V'(\nabla \phi_L) \right) (s , e) \right|  \leq \sum_{e' \cap e \neq \emptyset} \left| V'(\nabla \phi_L)(t , e') \right|  \leq C + \sum_{e' \cap e \neq \emptyset} \left|\nabla \phi_L (t , e') \right|^{r-1}.
\end{equation*}
Using Lemma~\ref{lemmastochint} (with $f = N \indc_{\left[\frac{n}{N} , \frac{n+1}{N}\right]}$), we deduce that
\begin{align} \label{eq:13350512}
    \P \left[ \int_{\frac{n}{N}}^{\frac{n+1}{N}} \left| \nabla \left( \nabla \cdot V'(\nabla \phi_L) \right) (s , e) \right| \, ds \geq \frac{K}{4} \right] & \leq C \exp \left( - c (N K )^{\frac{r}{r-1}} \right) \\
    & \leq C \exp \left( - c K^{r}\right). \notag
\end{align}
Additionally, the supremum of the Brownian motions can be estimated by noting that the difference of two independent Brownian motions is equal in law (up to a multiplicative constant equal to $\sqrt{2}$) to a Brownian motion. We obtain
\begin{align} \label{eq:13360512}
    \P \left[  \sup_{t \in \left[ \frac{n}{N} , \frac{n+1}{N} \right]} \left| \nabla B_{t}(e) - \nabla B_{\frac{n}{N}}(e) \right| \geq \frac{K}{4} \right] & = 
    \P \left[ \sup_{t \in \left[ 0 , 1 \right]} B_{t} \geq \frac{\sqrt{N}K}{4 \sqrt{2} }  \right] \\ & \leq C \exp \left( - c N K^2 \right) \notag \\
    & \leq C \exp \left( - c K^r \right). \notag
\end{align}
Combining~\eqref{eq:13340512},~\eqref{eq:13350512} and~\eqref{eq:13360512}, with a union bound, we have obtained
\begin{align} \label{eq:13570512}
    \lefteqn{\P \left[  \sup_{n \in \left\{ 0 , \ldots, \lfloor T N \rfloor \right\}} \sup_{t \in \left[ \frac{n}{N} , \frac{n+1}{N} \right]} \left| \nabla \phi_L \left(t , e \right) - \nabla \phi_L \left(\frac{n}{N} , e \right) \right| \geq \frac{K}{2} \right] } \qquad & \\ & \leq \sum_{n = 0}^{ \lceil T N \rceil} \P \left[ \sup_{t \in \left[ \frac{n}{N} , \frac{n+1}{N} \right]} \left| \nabla \phi_L \left(t , e \right) - \nabla \phi_L \left(\frac{n}{N} , e \right) \right| \geq \frac{K}{2} \right] \notag \\
    & \leq C NT \exp \left( - c K^r \right)\notag  \\
    & \leq C K^r T \exp \left( - c K^r \right) \notag \\
    & \leq  C T \exp \left( - c K^r \right). \notag
\end{align}
Combining~\eqref{eq:11580512},~\eqref{eq:13540512} and~\eqref{eq:13570512} completes the proof of~\eqref{eq:13510512}.
\end{proof}

\subsection{A fluctuation estimate for the Langevin dynamic}

Building upon the stochastic integrability estimate for the dynamic established in Proposition~\ref{propositionsubdynamic}, we prove that the dynamic cannot remain contained in a deterministic interval for a long time. The argument follows the one outline in Section~\ref{eq:18111101}, with additional technicalities to take into account that the second derivative of the potential $V$ is assumed to be unbounded from above. We recall the definition~\eqref{def.RV} of the constant $R_V$.

\begin{proposition}[Fluctuation for the Langevin dynamic] \label{prop3.4}
There exist two constants $C := C(d , V) < \infty$ and $c := c(d , V) >0$ such that, for any $T \geq 1$ and any edge $e \in \mathbb{T}_L$, 
\begin{equation} \label{upperboundfluctuation}
    \P \left[ \, \forall t \in [0 , T], \, \left| \nabla \phi_L(t , e) \right| \leq R_V \, \right] \leq C \exp \left( - c \left( \ln T \right)^{\frac{r}{r-2}} \right).
\end{equation}
\end{proposition}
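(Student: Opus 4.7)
The plan is to implement the strategy sketched in Section~\ref{eq:18111101}, with the uniform control $V'' \leq 1$ replaced by the stochastic integrability of Propositions~\ref{prop.stochintgradfield} and~\ref{propositionsubdynamic}. First, decompose each Brownian motion $B_\cdot(x)$ into independent unit increments $X_n(x) := B_{n+1}(x) - B_n(x) \sim \mathcal{N}(0,1)$ and independent Brownian bridges $W_n(\cdot, x)$ via~\eqref{def.XnandWnsketch}, so $\phi_L$ becomes a deterministic function of $\phi$, $(X_n)$, and $(W_n)$. Fix an edge $e = (x_0, x) \in E(\mathbb{T}_L)$ and introduce the dipole direction $\tilde Y_n := (X_n(x) - X_n(x_0))/\sqrt{2}$, which is an $\mathcal{N}(0,1)$ random variable, independent across $n$ and of all the other randomness. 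Differentiating~\eqref{Langevinreadyfordifferentiation} in this direction shows that $w := \partial \phi_L/\partial \tilde Y_n$ solves the linear parabolic equation with coefficients $\a(t, e') := V''(\nabla \phi_L(t, e'))$ and dipole source $\indc_{[n, n+1]}(t)(\delta_x - \delta_{x_0})$, whence Duhamel's principle yields
\begin{equation*}
    w(n+1, y) = \int_n^{n+1} \bigl[ P_\a(n+1, y; s, x) - P_\a(n+1, y; s, x_0) \bigr] \, ds.
\end{equation*}

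The heart of the argument is a quantitative derivative lower bound on an environmental good event. Let $K \geq R_V$ be a cutoff (to be optimized) and set
\begin{equation*}
    \mathcal{G}_n^K := \Bigl\{\sup_{t \in [n, n+1],\, e' \cap e \neq \emptyset} |\nabla \phi_L(t, e')| \leq K\Bigr\}.
\end{equation*}
On $\mathcal{G}_n^K$, Assumption~\ref{assumption1} gives $\a(t, e') \leq C(1 + K^{r-2})$ for $e' \cap e \neq \emptyset$ during $[n, n+1]$, and hence $|\partial_t P_\a(t, y; s, y')| \leq C' K^{r-2}$ for $y, y' \in \{x_0, x\}$. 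Inserting this together with the initial condition $P_\a(s, y; s, y') = \delta_y(y') - 1/|\mathbb{T}_L|$ into the Duhamel formula, and exploiting the dipole cancellation between the two endpoints of $e$, yields the pointwise lower bound
\begin{equation*}
    \frac{\partial \nabla \phi_L(n+1, e)}{\partial \tilde Y_n} = w(n+1, x) - w(n+1, x_0) \geq \frac{c}{K^{r-2}} \quad\text{on } \mathcal{G}_n^K.
\end{equation*}

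Now convert this into a probabilistic estimate. Let $\mathcal{F}_n$ denote the $\sigma$-algebra generated by all of the randomness except the dipole $\tilde Y_n$, so that $\tilde Y_n$ is $\mathcal{N}(0,1)$-distributed independently of $\mathcal{F}_n$. Conditionally on $\mathcal{F}_n$, the map $\tilde Y_n \mapsto \nabla \phi_L(n+1, e)$ is a deterministic monotone nondecreasing function which is smooth with slope $\geq c/K^{r-2}$ on the subset of $\tilde Y_n$-values where $\mathcal{G}_n^K$ is realized; its preimage of $[-R_V, R_V]$ is consequently contained in an interval of length $\leq C R_V K^{r-2}$, and a standard Gaussian tail estimate gives
\begin{equation*}
    \P \bigl[\{|\nabla \phi_L(n+1, e)| \leq R_V\} \cap \mathcal{G}_n^K \, \big| \, \mathcal{F}_n \bigr] \leq 1 - \eta_K,
\end{equation*}
for some $\eta_K > 0$ depending only on $d$, $V$, and $K$. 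Since the dipoles $(\tilde Y_n)_n$ are independent across $n$, this conditional bound can be chained along the corresponding filtration; controlling the bad events $\mathcal{G}_n^{K,c}$ via Proposition~\ref{propositionsubdynamic} applied to each unit interval (with a union bound over the $O(1)$ adjacent edges), I obtain
\begin{equation*}
    \P\bigl[\forall n \in \{0, \ldots, T-1\} : |\nabla \phi_L(n+1, e)| \leq R_V\bigr] \leq (1 - \eta_K)^T + CT \exp(-c K^r).
\end{equation*}
Optimizing the cutoff $K$ as a function of $T$ then delivers the advertised bound.

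The main obstacle is the pointwise derivative lower bound: because $\mathcal{G}_n^K$ itself depends on $\tilde Y_n$, the slope estimate has to be established pointwise within the (random) set of $\tilde Y_n$-values where $\mathcal{G}_n^K$ holds, which requires a careful continuity-and-monotonicity analysis of the trajectory as $\tilde Y_n$ varies together with a local-in-space, local-in-time analysis of the dipole Duhamel integral to extract the $K^{r-2}$ scaling uniformly. It is then the sharp stochastic integrability of Proposition~\ref{prop.stochintgradfield} (propagated to the dynamic by Proposition~\ref{propositionsubdynamic}) that allows the optimal cutoff $K$ to be selected so that the two error terms balance.
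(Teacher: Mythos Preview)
Your overall plan follows the paper's strategy, but the derivative lower bound step contains a genuine gap, and the gap is not merely technical.

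With unit time steps the Duhamel integral $\nabla w(n+1,e)=\int_n^{n+1} g(s)\,ds$ picks up a \emph{positive} contribution of order $K^{-(r-2)}$ only from $s$ in a window of width $\sim K^{-(r-2)}$ near $n+1$, where the short-time estimate $|\partial_s g|\le C'K^{r-2}$ together with $g(n+1^{-})=2$ applies. For $s$ outside this window you have nothing better than the a~priori bound $|g(s)|\le 2$ coming from the maximum principle~\eqref{upperbound.PAmaxprinc}, and nothing forces $g(s)\ge 0$; the dipole structure does not supply a sign (the spectral identity $p_t(x,x)+p_t(x_0,x_0)-2p_t(x,x_0)\ge 0$ is available only for \emph{time-homogeneous} symmetric kernels, not for the dynamic environment $\a$). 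Consequently $\int_n^{n+1} g(s)\,ds$ is bounded below only by $cK^{-(r-2)}-2$, which is useless for large $K$. The inequality $\partial\nabla\phi_L(n+1,e)/\partial\tilde Y_n\ge c/K^{r-2}$ therefore does not follow from your argument.

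Even granting the slope bound, the subsequent optimisation fails. A map with slope $\ge c/K^{r-2}$ has a preimage of $[-R_V,R_V]$ of length $\sim K^{r-2}$, and for a standard Gaussian the complement of the worst interval of that length has mass $\eta_K\sim\exp(-cK^{2(r-2)})$. Balancing $(1-\eta_K)^{T}\sim\exp(-Te^{-cK^{2(r-2)}})$ against $CT\exp(-cK^r)$ requires both $K^{2(r-2)}\lesssim\ln T$ and $K^{r}\gtrsim\ln T$, which is impossible for $r\ge 4$; for $2<r<4$ it yields only $\exp\bigl(-c(\ln T)^{r/(2(r-2))}\bigr)$, strictly weaker than~\eqref{upperboundfluctuation}.

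The paper repairs both issues by taking a \emph{short} time step $1/N$ with $N:=(\ln T)/R_V^2$. On the good event the conductances near $e$ satisfy $\sum_{e'\cap e\neq\emptyset}\a\le N/2$, so over an interval of length $1/N$ the integrand $\nabla P_\a$ stays above $1/2$ and the derivative of the gradient with respect to the (single-endpoint) increment $X_l(y)$ is uniformly $\ge 3/4$. The preimage of $[-R_V,R_V]$ then has \emph{bounded} length, independent of the cutoff; since the increment now has variance $1/N$, the computation~\eqref{eq:19132612} gives an escape probability $\ge T^{-9/10}$, and iterating over $\sim NT$ steps yields $\exp(-T^{1/10})$, dominated by the good-event complement $\exp\bigl(-c(\ln T)^{r/(r-2)}\bigr)$ from Proposition~\ref{propositionsubdynamic}. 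The adaptation of the time step to the good-event size of the environment is precisely the ingredient missing from your argument.
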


\begin{proof}
We fix an edge $e \in E(\mathbb{T}_L)$ and will prove the following estimate: there exist two constants $C := C(d,V) < \infty$ and $c := c(d , V) > 0$ and a time $T_0 := T_0(d , V) < \infty$ such that, for any $T \geq T_0$,
\begin{equation} \label{upperboundfluctuation2}
    \P \left[ \, \forall t \in [0 , T], \, \left| \nabla \phi_L(t , e) \right| \leq R_V \, \right] \leq C \exp \left( - c \left( \ln T \right)^{\frac{r}{r-2}} \right).
\end{equation}
The bound~\eqref{upperboundfluctuation} can be deduced from~\eqref{upperboundfluctuation2} by increasing the value of the constant $C$. 
Let us fix a time $T \geq 1$ and let $N := (\ln T)/R_V^2$. The definition of the parameter $N$ is motivated by the following inequality: for any $T$ chosen sufficiently large (universally),
\begin{align} \label{eq:19132612}
    \mathbb{P} \left[ \left| B_{1/N} \right| \geq \frac{4}{3} R_V \right] & = \mathbb{P} \left[ \left| B_{1} \right| \geq \frac{4}{3} \sqrt{\ln T} \right] \\
    & = \frac{2}{\sqrt{2\pi}} \int_{\frac{4}{3}\sqrt{\ln T}}^\infty e^{-\frac{x^2}{2}} \, dx \notag  \\
    & \geq \frac{2}{\sqrt{2\pi}} \int_{\frac{4}{3}\sqrt{\ln T}}^{\frac{4}{3}\sqrt{\ln T} + 1} e^{-\frac{x^2}{2}} \, dx \notag \\
    & \geq  \frac{2}{\sqrt{2\pi}}  \exp \left( - \frac{1}{2} \left( \frac{4}{3} \sqrt{\ln T} + 1 \right)^2  \right) \notag \\
    & \geq \frac{1}{T^{\sfrac{9}{10}}}. \notag
\end{align}
The proof relies on the observation that a Brownian motion can be decomposed into mutually independent Brownian bridges and increments. To be more specific, we introduce the following sets and notation:
\begin{itemize}
    \item For each $k \in \N$ and each $x \in \mathbb{T}_L$, we let $W_k(\cdot ; x)$ be the Brownian bridge defined by the formula
    \begin{equation*}
        \forall t \in \left[0, \frac{1}{N}\right], \hspace{5mm} W_k(t ; x) := B_{t + \frac{k}{N}}(x) - B_{\frac{k}{N}}(x) - N t (B_{\frac{k+1}{N}}(x) - B_{\frac{k}{N}}(x)).
    \end{equation*}
    We will denote by $\mathcal{W} := \left\{  W_k (\cdot ; x) \, : \, k \in \N, x \in \mathbb{T}_L \right\}$ the collection of Brownian bridges.
    \item For each $k \in \N$ and each $y \in \mathbb{T}_L$, we denote by $X_k(y)$ the increment
    \begin{equation*}
            X_k(y) := B_{\frac{k+1}{N}}(y) - B_{\frac{k}{N}}(y).
    \end{equation*}
    We will denote by $\mathcal{X} := \left\{ X_k(x) \, : \, k \in \N, \, x \in \mathbb{T}_L\right\}$ the set of all the increments. For $(l,y) \in \N \times \mathbb{T}_L$, the set $\mathcal{X}_{l , y} := \left\{ X_k(x) \, : \, k \in \N, \, x \in \mathbb{T}_L, k \neq l, x \neq y\right\}$ denotes the collection of all the increments except~$X_l(y)$.
\end{itemize}
In particular, the Brownian bridges $\left\{ B_t(x) \, : \, x \in \mathbb{T}_L, \, t \geq 0 \right\}$ are fully determined by the Brownian bridges of $\mathcal{W}$ and the increments of $\mathcal{X}$. This implies, using the discussion of Section~\ref{section2.2}, that the dynamic $\phi_L$ is fully determined by the initial condition $\phi$, the Brownian bridges of $\mathcal{W}$ and the increments of $\mathcal{X}$. We thus introduce the notation 
$$\mathcal{R} := (\phi , \mathcal{X}, \mathcal{W}).$$
The set of all possible triplets $\mathcal{R}$ will be denoted by 
$$\Omega := \Omega_{\mathbb{T}_L}^{\circ} \times \R^{\N \times \mathbb{T}_L} \times C\left( \left[0,\frac 1N\right] , \R\right)^{\N \times \mathbb{T}_L}.$$
Since the dynamic $\left\{ \phi_L(t , x) \, : \, t \geq 0, \, x \in \mathbb{T}_L \right\}$ can interpreted as deterministic functions of $\mathcal{R} \in \Omega$, we  will write
\begin{equation*}
    \phi_L(t , x) := \phi_L(t , x) \left(  \mathcal{R} \right).
\end{equation*}
For $(l,y) \in \N \times \mathbb{T}_L$, we denote by $\mathcal{R}_{l , y} := ( \phi, \mathcal{X}_{l , y}, \mathcal{W})$ and by $\Omega_{l,y}$ the set of possible values for $\mathcal{R}_{l , y}$. We have the identities $\mathcal{R} = (X_l(y), \mathcal{R}_{l , y})$ and $\Omega = \R \times \Omega_{l,y}$. To emphasize the dependency of the dynamic on the increment $X_l(y)$, we will write
\begin{equation} \label{eq:15342911}
    \phi_L(t , x) = \phi_L(t , x) \left( X_l(y), \mathcal{R}_{l , y} \right).
\end{equation}
We denote by $\mathcal{F}_{\mathcal{R}, l,y}$ the $\sigma$-algebra generated by $\mathcal{R}_{l,y}$ and note that the increment $X_{l}(y)$ is independent of the $\sigma$-algebra $\mathcal{F}_{\mathcal{R}, l,y}$. For later use, we note that the dynamic $\phi_L(t , x)$ depends only on the increments $X_k(y)$ and the Brownian bridges $W_k(\cdot; y)$ such that $t \geq \frac{k}{N}$. This reflects the fact that the dynamic $\phi_L$ evaluated at the time $t$ depends only on the realization of the Brownian motions before the time $t$. 

We now fix a positively oriented edge $e \in E \left( \mathbb{T}_L \right)$ and let $y$ be the second endpoint of $e$. For any~$l \in \N$, we introduce the following random subset of $\R$ (depending on the collection $\mathcal{R}_{l , y}$),
\begin{equation} \label{eq:11200112}
    \mathcal{A}_l (\mathcal{R}_{l , y}) := \left\{  X \in \R \, : \, \left| \nabla \phi_L \left(\frac{l+1}{N} , e \right) \left( X, \mathcal{R}_{l , y} \right) \right| \leq R_V  \right\} \subseteq \R,
\end{equation}
where we used the notation introduced in~\eqref{eq:15342911}. In words, the set $\mathcal{A}_l (\mathcal{R}_{l , y})$ is the set of all possible values for the increment $X_l(y)$ such that the gradient of the dynamic $\phi_L$ computed at time $(l+1)/N$ at the edge $e$ with initial condition, Brownian bridges and increments given by $\mathcal{R} = (X_l(y) , \mathcal{R}_{l , y})$ belongs to the interval $[-R_V , R_V]$. 

We next introduce the event $A_l \subseteq \Omega$ defined as follows
\begin{equation} \label{eq:11210112}
    A_l := \left\{ \mathcal{R} := (X_l(y),  \mathcal{R}_{l , y}) \in \Omega \, : \,  X_l(y) \in \mathcal{A}_l (\mathcal{R}_{l , y}) ~~\mbox{and}~~ \frac{1}{\sqrt{2 \pi N}}\int_{\mathcal{A}_l (\mathcal{R}_{l , y})} e^{- \frac{x^2}{2N}} \, dx \leq 1- \frac{1}{T^{\sfrac{9}{10}}} \right\}.
\end{equation}
Since the law of the increment $X_l(y)$ is Gaussian of variance $1/N$ and since $X_l(y)$ is independent of the set $\mathcal{R}_{l , y}$, we have the almost sure upper bound
\begin{equation} \label{eq:28111220}
    \E \left[ \indc_{A_{l}}  \big| \mathcal{F}_{\mathcal{R}, l,y} \right] \leq 1- \frac{1}{T^{\sfrac{9}{10}}}.
\end{equation}
We next estimate the probability for the intersection of all the events $A_{l}$ for $l \in \left\{ 0 , \ldots, \lfloor NT \rfloor \right\}$ and prove the following stretched exponential decay in the time $T$,
\begin{equation} \label{eq:11492811}
    \P \left[ \bigcap_{l = 0}^{ \lfloor N T \rfloor} A_l \right] \leq \exp \left( - T^{\sfrac{1}{10}} \right).
\end{equation}
The proof of~\eqref{eq:11492811} is obtained by consecutive conditioning. We first note that, since the dynamic~$\phi_L(t , x)$ depends only on the increments $X_l(y)$ and the Brownian bridges $W_l(\cdot; y)$ such that $t \geq \frac{l}{N}$, the events $(A_0, , \ldots, A_{\lfloor NT \rfloor - 1})$ do not depend on the increment $X_{\lfloor NT \rfloor}(y)$, and are thus measurable with respect to the $\sigma$-algebra $\mathcal{F}_{\mathcal{R}, \lfloor NT \rfloor,y}$. Combining this observation with the upper bound~\eqref{eq:28111220}, we obtain
\begin{align*}
    \P \left[ \bigcap_{l = 0}^{\lfloor NT \rfloor } A_l \right] & = \E \left[ \prod_{l = 0}^{\lfloor NT \rfloor } \indc_{A_l} \right] \\
    & = \E \left[ \E \left[ \prod_{l = 0}^{\lfloor NT \rfloor } \indc_{A_l}  \bigg| \mathcal{F}_{\mathcal{R}, \lfloor NT \rfloor ,y}\right]\right] \\
    & =  \E \left[\left(  \prod_{l = 0}^{\lfloor NT \rfloor - 1}  \indc_{A_l} \right) \times \E \left[ \indc_{A_{\lfloor NT \rfloor }}  | \mathcal{F}_{\mathcal{R}, \lfloor NT \rfloor ,y} \right]\right] \\
    & \leq \left( 1- \frac{1}{T^{\sfrac{9}{10}}} \right)\P \left[ \bigcap_{l = 0}^{\lfloor NT \rfloor -1} A_l \right].
\end{align*}
We may then iterate the previous computation, noting that, for any $l \in \left\{0 , \ldots, \lfloor NT \rfloor-1 \right\}$, the events $(A_1 , \ldots, A_{l})$ are measurable with respect to the $\sigma$-algebra $\mathcal{F}_{\mathcal{R} , l +1 ,y}$. This leads to the upper bound, for~$T$ sufficiently large (depending only on $V$) so that $\lfloor NT \rfloor +1 \geq T (\ln T)/R_V^2 \geq T$,
\begin{equation*}
    \P \left[ \bigcap_{l = 0}^{ \lfloor N T \rfloor} A_l \right] \leq \left( 1- \frac{1}{T^{\sfrac{9}{10}}} \right)^{\lfloor NT \rfloor +1}
     \leq  \exp \left( - \frac{\lfloor NT \rfloor +1}{T^{\sfrac{9}{10}}} \right) 
     \leq \exp \left( - T^{\sfrac{1}{10}} \right).
\end{equation*}
We next select a time $T_G := T_G(d , V) < \infty$ and a constant $C_G := C_G(d , V) < \infty$ such that the following implication holds: for any $T \geq T_G$,
\begin{equation} \label{eq:8.570112}
    \sum_{e' \cap e \neq \emptyset} \left| \nabla \phi_L (t , e') \right| \leq \frac{(\ln T)^{\frac{1}{r-2}}}{C_G} \implies \sum_{e' \cap e \neq \emptyset} \left| \a (t , e') \right| \leq \frac{N}{2}.
\end{equation}
The identity $N := \ln T /R_V^2$ and Assumption~\ref{assumption1} ensure that the constant $C_G$ and the time $T_G$ exist and are finite.
We then define the interval $I_T$
\begin{equation*}
    I_T := \left[ - \frac{(\ln T)^{\frac{1}{r-2}}}{(16 \sqrt{2}d)C_G} , \frac{(\ln T)^{\frac{1}{r-2}}}{(16 \sqrt{2}d)C_G}  \right]
\end{equation*}
as well as the good event
\begin{equation*}
    G_T := \left\{ \mathcal{R} \in \Omega \, : \, \sup_{t \in [0,T]} \sum_{e' \cap e \neq \emptyset} \left| \nabla \phi_L (t , e')(\mathcal{R}) \right| \leq \frac{(\ln T)^{\frac{1}{r-2}}}{2 C_G} \right\} \bigcap \bigcap_{k=0}^{\lfloor NT \rfloor} \left\{  X_k(y) \in I_T    \right\}.
\end{equation*}
We first show that the probability of the event $G_T$ is close to $1$. Using Proposition~\ref{propositionsubdynamic}, that the law of the increments $\left\{ X_k(y) \, : \, 1 \leq k \leq \lfloor NT \rfloor \right\}$ is Gaussian of variance $1/N = R_V^2/\ln T$ and a union bound on the complement of the event $G_T$, we obtain
\begin{equation} \label{eq:18092811}
    \P \left[ G_T^c \right] \leq C T \exp \left( - c \left( \ln T \right)^{\frac{r}{r-2}} \right) + C NT \exp \left( - c \left( \ln T \right)^{\frac{r}{r-2}} \right) \leq  C \exp \left( - c \left( \ln T \right)^{\frac{r}{r-2}} \right).
\end{equation}
We will now prove the inclusion of events
\begin{equation} \label{eq:18082811}
    \left\{ \mathcal{R} \in \Omega \, : \, \forall t \in [0 , T], \, \left| \nabla \phi_L(t , e) \right| \leq R_V  \right\} \subseteq \bigcap_{l = 0}^{ \lfloor N T \rfloor} A_l \cup G_T^c.
\end{equation}
Proposition~\ref{upperboundfluctuation} is then obtained by combining~\eqref{eq:11492811},~\eqref{eq:18092811},~\eqref{eq:18082811} and a union bound.
The rest of the argument is devoted to the proof of~\eqref{eq:18082811}. As mentioned in Section~\ref{eq:18111101}, we first observe from the definition of the Langevin dynamic that the function 
$$(X_l(y), \mathcal{R}_{l,y}) \mapsto \phi_L(t , x) (X_l(y), \mathcal{R}_{l,y})$$ 
is differentiable with respect to the increment $X_l(y)$, and its derivative can be computed in terms of a solution of a parabolic equation. To be more specific, let us introduce the notation
\begin{equation} \label{eq:08243011}
    w(t , y)(X_l(y), \mathcal{R}_{l,y}) := \frac{\partial \phi_L(t , y)}{\partial X_l(y)} (X_l(y), \mathcal{R}_{l,y}),
\end{equation}
and note that, for any $l \in \N$ and any $t \in \left[\frac{l}{N}, \frac{l+1}{N}\right]$,
\begin{equation*}
    B_t(y) = \sum_{k = 0}^{l-1} X_k (y) + N \left( t - \frac{k}{N}\right) X_{l}(y) + W_l\left(t - \frac{l}{N} ; y\right),
\end{equation*}
which implies the identity, for any $t \in \left[\frac{l}{N}, \frac{l+1}{N}\right]$,
\begin{equation*}
    d B_t(y) = N X_{l}(y) dt + d W_l\left(t - \frac{l}{N} ; y\right).
\end{equation*}
Substituting the previous identity in the definition~\eqref{def.Langevindynamics} of the Langevin dynamic and differentiating both sides of the identity by $X_l(y)$, we see that the function $w$ solves the parabolic equation
\begin{equation*}
     \left\{ \begin{aligned}
    \partial_t w (t , x) &= \nabla \cdot \a \nabla u (t , x) + \sqrt{2} N \indc_{\left[ \frac{l}{N}, \frac{l+1}{N} \right]}(t) \delta_{y}(x) &~\mbox{for}~& (t , x) \in [0 , \infty] \times \mathbb{T}_L, \\
    w (0 , x) &= 0 &~\mbox{for}~& x \in \mathbb{T}_L,
    \end{aligned} \right.
\end{equation*}
with the environment $\a(t , e') := V''(\nabla \phi_L(t , e'))$.
Applying Duhamel's principle with the definition of the heat kernel stated in~\eqref{eq:11012312}, we obtain the identity, for any $t \geq \frac{l}{N}$,
\begin{equation} \label{eq:8333011}
    w(t , x) = \sqrt{2} N \int_{\frac{l}{N}}^{\min \left( \frac{l+1}{N}, t \right)} \left( P_\a (t , x ; s , y) + \frac{1}{\left| \mathbb{T}_L \right| } \right) \, ds. 
\end{equation}
Additionally, the upper and lower bounds~\eqref{upperbound.PAmaxprinc} imply the following estimate on the gradient of the heat kernel, for any edge $e' \in E \left( \mathbb{T}_L \right)$ and any pair of times $(t , s) \in (0 , \infty)^2$ with $t \geq s$,
\begin{equation} \label{eq:13462811}
    \left| \nabla P_\a \left( t , e' ; s , y  \right) \right| \leq 1.
\end{equation}
A combination of the previous displays implies the following bound, for any $\mathcal{R} = (X_l(y), \mathcal{R}_{l,y}) \in \Omega$ and any $(t , e') \in (0 , \infty) \times E \left( \mathbb{T}_L \right)$,
\begin{equation} \label{eq:08250112}
    \left| \frac{\partial \nabla \phi_L(t , e')}{\partial X_l(y)} (X_l(y), \mathcal{R}_{l,y}) \right| \leq \sqrt{2}.
\end{equation}
We then fix a realization of the randomness $\mathcal{R} := (X_{l}(y), \mathcal{R}_{l , y}) \in \Omega$ and assume that $\mathcal{R} \in G_T$. We first claim that, for any increment $X \in I_T$,
\begin{equation} \label{eq:08260112}
   \sup_{t \in [0 , T]}\sum_{e'\cap e \neq \emptyset } \left| \nabla \phi_L \left( t, e' \right) \left( X , \mathcal{R}_{l , y} \right) \right| \leq \frac{(\ln T)^{\frac{1}{r-2}}}{C_G}.
\end{equation}
To prove~\eqref{eq:08260112}, we first use~\eqref{eq:08250112} and deduce that
\begin{equation*}
    \sup_{t \in [0 , T]}\sum_{e'\cap e \neq \emptyset} \left| \nabla \phi_L \left( t, e' \right)(X , \mathcal{R}_{l,y}) -  \nabla \phi_L \left( t, e' \right)(X_{l}(y) , \mathcal{R}_{l,y}) \right|  \leq \sqrt{2} (4 d) \left| X - X_{l}(y) \right|
     \leq  \frac{(\ln T)^{\frac{1}{r-2}}}{2C_G}.
\end{equation*}
By the assumption $(X_{l}(y), \mathcal{R}_{l , y}) \in G_T$, we have that
\begin{equation*}
     \sup_{t \in [0 , T]} \sum_{e'\cap e \neq \emptyset} \left| \nabla \phi_L \left( t, e' \right)(X_{l}(y) , \mathcal{R}_{l,y}) \right| \leq \frac{(\ln T)^{\frac{1}{r-2}}}{2 C_G}.
\end{equation*}
A combination of the two previous displays with the triangle inequality yields, for any $X \in I_T$,
\begin{equation*}
     \sup_{t \in [0 , T]} \sum_{e'\cap e \neq \emptyset} \left| \nabla \phi_L \left( t, e' \right)(X , \mathcal{R}_{l,y})\right| \leq  \frac{(\ln T)^{\frac{1}{r-2}}}{C_G}.
\end{equation*}
Using the definition of the constant $C_G$ and the implication~\eqref{eq:8.570112}, we have proved the following result: for any $T \geq T_G$, any $\mathcal{R} := (X_l(y), \mathcal{R}_{l,y}) \in G_T$, any increment $X \in I_T$, one has the upper bound
\begin{equation*}
    \sup_{t \in [0 , T]} \sum_{e'\cap e \neq \emptyset} \left| \a (t , e')(X , \mathcal{R}_{l,y}) \right| \leq \frac{N}{2}.
\end{equation*}
The previous upper bound is useful as it can be used to control the derivative in time of the heat kernel. Indeed, using the identity $\partial_t P_\a = \nabla \cdot \a \nabla P_\a$ together with the bound~\eqref{eq:13462811}, we obtain the estimate, for any pair of times $(s , t)  \in [0 , \infty)^2 $,
\begin{equation*}
    \left| \partial_t \nabla P_\a (t , e ; s , y) \right|
    \leq \sum_{e' \cap e \neq \emptyset}  \a(t , e') \left| \nabla P_\a (t , e' ; s , y) \right| \leq \sum_{e' \cap e \neq \emptyset} \a (t, e).
\end{equation*}
Combining the two previous displays with the identity $\nabla P_\a (s , e ; s , y) = 1$ (since $y$ is the second endpoint of $e$), we obtain that, for any $\mathcal{R} := (X_l(y), \mathcal{R}_{l,y}) \in G_T$ and any increment $X \in I_T$,
\begin{align*}
    \frac{\partial \nabla \phi_L\left( \frac{l+1}{N} , e\right)}{\partial X_l(y)} (X, \mathcal{R}_{l,y}) & = \sqrt{2} N \int_{\frac{l}{N}}^{\frac{l+1}{N}} \nabla  P_\a \left(\frac{l+1}{N}, e ; s , y \right) (X, \mathcal{R}_{l,y}) \, ds  \\
    &  \geq \sqrt{2} N \int_{\frac{k}{N}}^{\frac{k+1}{N}} 1 - \frac{N}{2} \left(\frac{k+1}{N} - s \right) \, ds \\
    & \geq \frac{3}{4}.
\end{align*}
This lower bound on the derivative of the gradient of the dynamic implies that, for any $(X_l(y) , \mathcal{R}_{l,y}) \in G_T$, the function
\begin{equation*}
     X \mapsto \nabla \phi_L\left( \frac{l+1}{N} , e \right)(X , \mathcal{R}_{l,y}) - \frac{3}{4} X ~~\mbox{is increasing on the interval}~~I_T.
\end{equation*}
This implies the following upper bound on the Lebesgue measure of the set $\mathcal{A}_l (\mathcal{R}_{l , y}) \cap I_T$,
\begin{equation*}
    \left| \mathcal{A}_l (\mathcal{R}_{l , y}) \cap I_T \right| \leq \frac{8}{3} R_V,
\end{equation*}
which then yields the estimate, for any $T$ sufficiently large (depending on $d$ and $V$) so that the computation~\eqref{eq:19132612} applies
\begin{equation*}
    \frac{1}{\sqrt{2 \pi N}}\int_{\mathcal{A}_l (\mathcal{R}_{l , y})} e^{- \frac{x^2}{2N}} \, dx  \leq 1 -  \frac{1}{\sqrt{2 \pi N}}\int_{I_T \setminus [-\frac{4}{3}R_V , \frac{4}{3} R_V]} e^{- \frac{x^2}{2N}} \, dx \leq 1 -  \frac{1}{T^{\sfrac{9}{10}}}.
\end{equation*}
From the definitions~\eqref{eq:11200112} and~\eqref{eq:11210112}, the previous inequality implies the identity, for any $l \in \{1 , \ldots , \lfloor NT \rfloor \}$,
\begin{equation*}
    G_T \cap A_l = G_T \cap \left\{ \mathcal{R} \in \Omega \, : \,  \left| \nabla \phi_L \left(\frac{l+1}{N} , e \right) \left(\mathcal{R} \right) \right| \leq R_V \right\}.
\end{equation*}
Taking the intersection over $l \in \{1 , \ldots , \lfloor NT \rfloor \}$ completes the proof of~\eqref{eq:18082811}.
\end{proof}

\section{On diagonal upper bound for the heat kernel} \label{section4ondiagupp}

In this section, we combine the result of Section~\ref{Section3} with the techniques developed by Mourrat and Otto~\cite{MO16} to obtain an on-diagonal upper bound for the heat kernel appearing in the Helffer-Sj\"{o}strand representation formula. The section is organized as follows. In Section~\ref{sec.moderatedevtandmaxquant}, we collect some preliminary definitions and results and state the main technical result of the section (pertaining to the decay rate of the $L^2$-norm of the heat kernel) in Theorem~\ref{thm3.13}. Section~\ref{sec:moderationenvt}, Section~\ref{sec:sec4.4} and Section~\ref{section:4.5} are devoted to the proof of Theorem~\ref{thm3.13} following the techniques of Mourrat and Otto~\cite{MO16}. The on-diagonal upper bound on the heat kernel is deduced from Theorem~\ref{thm3.13} in Section~\ref{section:ondiagonalheat}. Finally, Section~\ref{section:localization} completes the proof of Theorem~\ref{main.theorem} by combining the on-diagonal heat kernel estimate with the Helffer-Sj\"{o}strand representation formula.

\subsection{Preliminaries} \label{sec.moderatedevtandmaxquant}

We select two exponents $p , p' \in (d , \infty)$ depending only on the dimension $d$. These exponents will be used to define the moderated environment and apply the anchored Nash inequality, any specific values are admissible (for instance, one can choose $p = p' = d+1$). We let $\phi_L$ be the Langevin dynamic in the torus and let $\a := V''(\nabla \phi_L)$ be the environment appearing in the Helffer-Sj\"{o}strand representation formula. Using the stationarity of the gradient of the Langevin dynamic, Proposition~\ref{prop.stochintgradfield} and the growth condition assumed on the second derivative~$V''$, we know that all the moments of the random environment $\a$ are finite: for any $q \in [1 , \infty)$, and any $(t , e) \in (0 , \infty) \times E \left( \mathbb{T}_L \right)$,
\begin{equation} \label{ahasallmoments}
    \E \left[ \a(t , e)^q \right] < \infty. 
\end{equation}
Following the insight of Mourrat and Otto~\cite{MO16}, we introduce in this section the following moderated environment $w$. We first introduce the two functions
\begin{equation} \label{def.kt}
    k_t := \frac{\delta}{(1 + t)^{p+3}} \hspace{5mm} \mbox{and} \hspace{5mm}  K_t :=  k_t + \int_{t}^{\infty} s k_s \, ds,
\end{equation}
where $\delta := \delta(d)> 0$ is chosen sufficiently small so that, for any $t , s' \in (0 , \infty)$ with $s' \geq t$,
\begin{equation} \label{K*KsmallerthanK}
    \int_{t}^{s'} K_{s - t} K_{s' - s} \, ds \leq K_{s' - t} ~~ \mbox{and}~~ \int_{0}^{\infty} K_{s} \, ds \leq 1.
\end{equation}
Using the function $k$, we define the moderated environment $w$ as follows.

\begin{definition}[Moderated environment for the Langevin dynamic] \label{def:moderatedLangevin}
    We define the \emph{moderated environment} according to the formula, for any $(t , e) \in [0 , \infty) \times E\left( \mathbb{T}_L\right)$,
    \begin{equation} \label{eq:17580612}
    w(t , e)^2 = \int_t^{\infty} k_{s-t} \frac{\a(s, e) \wedge 1}{( s-t)^{-1} \sum_{e' \cap e \neq \emptyset}\int_t^s \a(s',e') \vee 1 \, ds'} \, ds.
\end{equation}
\end{definition}
Compared to the environment $\a$, the moderated environment $w$ satisfies the property that all the moments of $w$ and of $w^{-1}$ are finite, and we will prove in Proposition~\ref{propmoderatedenvtstochint} that, for any $q \in [1 , \infty)$, and any $(t , e) \in (0 , \infty) \times E \left( \mathbb{T}_L \right)$,
\begin{equation} \label{aandwallmoments}
    \E \left[ w(t , e)^q \right] +  \E \left[ w(t , e)^{-q} \right] < \infty.
\end{equation}
This result is proved in Proposition~\ref{propmoderatedenvtstochint}, and the proof builds upon the fluctuation estimate for the Langevin dynamic proved in Proposition~\ref{prop3.4}.

Various functionals of the environments $\a$ and $w$ will appear in the proof of the heat kernel estimate. They are collected below. Their formulae are technical, and we incite the reader to consult as a reference. They all possess the property they have finite moments of all order (see~\eqref{Mhaveallmoments} and~\eqref{Mscrhaveallmoments}). 

Before stating their definition, we recall the definitions of the exponents introduced in~\eqref{explambdakappanu} and~\eqref{explambdakappanubis}, let $\theta_c$ be the exponent given by~\eqref{def.thetac} of Proposition~\ref{theoremanchoredNash} (with the values of $p,p' \in (d , \infty)$ selected at the beginning of this section), and let $\alpha, \beta, \gamma$ be the exponents defined in~\eqref{def.alphabetagamma} (with $\theta = \theta_c$). For any time $t \geq 0$, we introduce the six random variables
\begin{equation*}
    \left\{ \begin{aligned}
       \mathcal{M}_{p'}(t) & := 1+  \left(1 + \| w (t , \cdot)\|_{\underline{L}^{\sigma_d}\left( \mathbb{T}_L \right)} \| w^{-1}(t , \cdot) \|_{\underline{L}^{\tau_d}\left( \mathbb{T}_L \right)} \right)^2 \sup_{r \in \{ 1 , \ldots, L\}} \| w^{-1} (t , \cdot)\|_{\underline{L}^{p'} (\Lambda_r)}^2,   \\
       \mathcal{M}_0(t) & := 1+ \sup_{r \in \{ 0 , \ldots , L \}} \| \a(t , \cdot)^{1/2} \|_{\underline{L}^{\sigma_d}\left( \Lambda_r \right)}^2 \left( 1 + \| w^{-1}(t , \cdot) \|_{\underline{L}^{\tau_d'}\left( \Lambda_r \right)}^2\right), \\
        \mathcal{M}_1(t) & := 1+ \int_t^\infty K_{s-t} \mathcal{M}_0(s)^{\frac{p }{2(1 - \theta_d)}} \, ds, \\
        \mathcal{M}_2(t)&  := 1+\sup_{x \in \mathbb{T}_L} \frac{ \sum_{e \ni x} \a(t,e)}{ |x|_*^{(p-2)/(p-1)}}, \\
        \mathcal{M}_3(t) & :=  1+\left( \int_{t}^\infty K_{s-t} \mathcal{M}_{p'}(s)^\frac{\alpha}{\beta} \, ds \right)^\beta,  \\
        \mathcal{M}_4(t) & := 1+ \sup_{s \in [t , t+1]} \left\| w(s , \cdot)^{-1}\right\|_{\underline{L}^d(\mathbb{T}_L)}^2.
    \end{aligned} \right.
\end{equation*} 
These six random variables appear at different stages of the proof. The term ``+1" is added to the definition to ensure that they are always larger than $1$. Their main key property is that they have finite moments of every order: for any $i \in \{ 0 , 1,2,3, 4 \}$, any $q \in [1 , \infty]$ and any $t \geq 0$,
\begin{equation} \label{Mhaveallmoments}
    \E \left[ \mathcal{M}_{p'}(t)^q \right] + \E \left[ \mathcal{M}_i(t)^q \right] < \infty.
\end{equation}
The proof of~\eqref{Mhaveallmoments} is a consequence of the bounds~\eqref{ahasallmoments} and~\eqref{aandwallmoments}, the Jensen inequality and the $L^p$-maximal inequality stated in Proposition~\ref{propmaximalineq}. Building upon these definitions, we consider the maximal functions
\begin{equation} \label{eq:20382112}
    \left\{ \begin{aligned}
    \mathscr{M}_1 & :=   \left( \sup_{t \geq 1} \frac{1}{t} \int_0^t \mathcal{M}_1(t)^{\frac 2p} \, dt \right)^{\frac p2} ,  \\
    \mathscr{M}_2 & := \left(\sup_{t \geq 1} \frac{1}{t} \int_0^t \mathcal{M}_2(t) \, dt \right)^{p-1}, \\
    \mathscr{M}_3 & :=  \left( \inf_{t \geq 1} \frac{1}{t} \int_0^t \mathcal{M}_3^{-\frac{1}{\alpha}}(s) \, ds \right)^{-\frac{\alpha}{\gamma}}, \\
     \mathscr{M}_4 & :=  \left( \inf_{t \geq 1} \frac{1}{t} \int_0^t \mathcal{M}_4^{-1}(s) \, ds \right)^{-1}.
    \end{aligned} \right.
\end{equation}
From the bound~\eqref{Mhaveallmoments} and the maximal inequality (with respect to the time variable) stated in Proposition~\ref{propmaximalineq} and the Jensen inequality, we know that all the moments of the random variables listed in~\eqref{eq:20382112} are finite, i.e., for any $i \in \{ 1 , 2 , 3,4\}$ and any $q \in [1 , \infty)$,
\begin{equation} \label{Mscrhaveallmoments}
    \E \left[ \mathscr{M}_i^q \right] < \infty.
\end{equation}
Finally, building on these definitions, we may define the random variables $\mathscr{M}$ and $\mathscr{M}'$ appearing in the definition of Theorem~\ref{thm3.13} above according to the formulae
\begin{equation*}
    \mathscr{M}  := \left( (\mathscr{M}_{1} + \mathscr{M}_{2}) \mathscr{M}_{3} \right)^{\frac{\gamma}{1 - \alpha - \gamma}} ~~\mbox{and} ~~
    \mathscr{M}' := \mathscr{M}_3^{\frac{2\gamma}{(d \beta + p \gamma)} + \frac{\gamma}{\alpha}} \mathscr{M}_4.
\end{equation*}
The inequality~\eqref{Mscrhaveallmoments} implies that all the moments of $\mathscr{M}$ and $\mathscr{M}'$ are finite. The main theorem of this section investigates the decay of the $L^2(\mathbb{T}_L)$-norm of the heat kernel. It can be compared to~\cite[Theorem 4.2]{MO16}

\begin{theorem}[Energy upper bound for dynamic environment] \label{thm3.13}
There exists a constant $C := C(d) < \infty$ such that, for any $t \geq 1$,
\begin{equation*}
    \sum_{x \in \mathbb{T}_L} P_\a \left( t , x \right)^2 \leq \frac{C \mathscr{M}}{(1+t)^{\frac d2}} \exp \left( - \frac{t}{C \mathscr{M}' L^2} \right).
\end{equation*}
\end{theorem}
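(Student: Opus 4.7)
The plan is to follow the Nash--Moser-type energy estimate of Mourrat and Otto~\cite{MO16}, adapted to the torus $\mathbb{T}_L$ and to environments whose upper bound is only stochastic. Writing $E(t) := \sum_{x \in \mathbb{T}_L} P_\a(t,x)^2$, the starting point is the energy identity
\begin{equation*}
E'(t) = -2 \sum_{e \in E(\mathbb{T}_L)} \a(t,e) |\nabla P_\a(t,e)|^2,
\end{equation*}
obtained by testing the parabolic equation against $P_\a$ and integrating by parts on the torus. The goal is to derive a Nash-type differential inequality $-E'(t) \gtrsim E(t)^{1+2/d}/(\text{integrable random prefactors})$ in the regime $t \lesssim L^2$, and to upgrade it to exponential decay via a spectral-gap bound on the torus when $t \gtrsim L^2$.

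For the polynomial regime I would apply the anchored Nash inequality (Theorem~\ref{theoremanchoredNash}) to a $\Zd$-lift of $P_\a(t,\cdot)$ (e.g. the canonical representative on $\Lambda_L$ extended by zero), together with the $L^1$-bound~\eqref{L1normheatkernelbounded}, obtaining
\begin{equation*}
E(t) \leq C \bigl( \mathcal{M}_{p'}(t)^{1/p'} \| w \nabla P_\a(t,\cdot) \|_{L^2} \bigr)^{2\alpha} \| |x|_*^{p/2} P_\a(t,\cdot) \|_{L^2}^{2\gamma}.
\end{equation*}
The weighted spatial moment on the right is controlled by a standard argument: testing the heat equation against $|x|_*^p P_\a$, integrating by parts, and invoking the pointwise bound on $\sum_{e \ni x} \a(t,e)/|x|_*^{(p-2)/(p-1)}$ packaged into $\mathcal{M}_2$ yields $\| |x|_*^{p/2} P_\a(t,\cdot) \|_{L^2}^2 \lesssim \mathscr{M}_2 (1+t)^{p-1}$. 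The decisive step is then to transfer the moderated Dirichlet form $\| w \nabla P_\a \|_{L^2}^2$ back to the true dissipation $\sum_e \a |\nabla P_\a|^2$: using the definition~\eqref{eq:17580612} of $w^2$ as a forward-in-time weighted average of $\a \wedge 1$ normalized by a neighbourhood average of $\a \vee 1$, and the convolution property~\eqref{K*KsmallerthanK} of $k$ and $K$, after averaging in time against $K_{s-t}$ and absorbing the normalization denominator through $\mathcal{M}_0$ and $\mathcal{M}_1$, the time-averaged moderated Dirichlet form is dominated by $-\int_t^{\infty} K_{s-t}\, E'(s)\, ds$.

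Combining these ingredients, using $\alpha + \beta + \gamma = 1$ and the exponent identity~\eqref{eq:3.7}, and a time-averaged manipulation, yields the required differential inequality which, via the standard Nash ODE integration, gives $E(t) \leq C\mathscr{M}(1+t)^{-d/2}$. For the exponential regime I would use that $\sum_x P_\a(t,x) = 0$ and invoke the discrete Poincar\'e inequality on $\mathbb{T}_L$, which gives $E(t) \leq C L^2 \| \nabla P_\a(t,\cdot) \|_{L^2(\mathbb{T}_L)}^2$. Writing $\nabla P_\a = w^{-1}(w \nabla P_\a)$ and applying H\"older against the $\underline{L}^d$-norm of $w^{-1}$ (controlled after time-averaging by $\mathscr{M}_4$), then re-using the transfer from $w^2$ to $\a$, produces $E'(t) \leq -E(t)/(C \mathscr{M}' L^2)$, from which Gronwall yields the exponential factor. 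Splicing the two regimes at $t \sim L^2$, where both bounds are of order $L^{-d}$, delivers the claimed estimate.

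The main obstacle is the transfer step from $w^2$ to $\a$. The original argument of Mourrat and Otto crucially uses $\a \leq 1$ and is carried out in infinite volume; here the unbounded tails of $\a$ must be absorbed into the integrable random prefactors $\mathcal{M}_0, \mathcal{M}_{p'}, \mathcal{M}_1, \mathcal{M}_3$ without destroying the Nash-type structure of the differential inequality, and the periodization of $P_\a$ must be chosen carefully so that the anchored Nash bound (a $\Zd$-statement) survives the passage to the torus and is compatible with the spatial weight $|x|_*^{p/2}$, which is exactly what the random variable $\mathscr{M}_2$ is designed to handle.
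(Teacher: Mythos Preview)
Your overall architecture is right and matches the paper: anchored Nash on the torus, transfer from $w$ to $\a$ via Proposition~\ref{prop4.3}, and a Poincar\'e-type bound for the exponential regime. Your splicing of the two regimes is also a legitimate alternative to the paper's unified treatment via $\mathcal{H}_t := e^{(C_0L^2)^{-1}\int_0^t\mathcal{M}_4(s)^{-1}ds}\mathcal{E}_t$ (once the polynomial bound is in hand, the product form follows by absorbing a power of $t/L^2$ into the exponential at the cost of doubling the constant there).

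However, there is a genuine gap in the polynomial regime. The bound you quote,
\[
\mathcal{N}_t := \bigl\| |x|_*^{p/2} P_\a(t,\cdot)\bigr\|_{L^2}^2 \lesssim \mathscr{M}_2\,(1+t)^{p-1},
\]
is too crude to close the Nash iteration. Feeding it into the anchored Nash inequality and integrating the resulting ODE for $\bar{\mathcal E}_t$ gives only $\mathcal E_t \lesssim (1+t)^{-[\alpha-(p-1)\gamma]/(1-\alpha)}$, and since $p>d$ forces $(p-1)\gamma > (p-d)\gamma/2$, this exponent is \emph{strictly smaller} than $d/2$ (for $d=2$, $p=p'=3$ one gets roughly $t^{-0.14}$ instead of $t^{-1}$). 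What is missing is the feedback of the energy decay into the weighted moment. The paper proves (Lemma~\ref{lemma3.11}, second inequality) the coupled differential inequality
\[
\partial_t \bar{\mathcal N}_t \;\le\; C\,\mathcal M_1(t)^{2/p}\,\bar{\mathcal N}_t^{(p-2)/p}\,\mathcal E_t^{2/p},
\]
which, with $\Lambda_t := \sup_{s\le t}(1+s)^{d/2}\mathcal E_s$, integrates to $\bar{\mathcal N}_t \lesssim (\mathscr M_1+\mathscr M_2)\,\Lambda_t\,(1+t)^{(p-d)/2}$. Inserting this into the time-averaged Nash bound $\bar{\mathcal E}_t \le C\,\mathcal M_3(t)\,(-\partial_t\bar{\mathcal E}_t)^\alpha\,\bar{\mathcal N}_t^\gamma$ and using the exponent identity~\eqref{eq:3.7} yields the self-improving estimate $(1+t)^{d/2}\mathcal E_t \lesssim \Lambda_t^{\gamma/(1-\alpha)}$, hence $\Lambda_t \le C\mathscr M$. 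Establishing that coupled inequality is the delicate point: the term $\sum_e \rho^{p-2}\a\,P_\a^2$ arising from $\partial_t\bar{\mathcal N}_t$ must be decomposed over dyadic annuli and estimated via the Gagliardo--Nirenberg--Sobolev inequality of Section~\ref{SectionDGNS}, so that one part is absorbed into the negative weighted Dirichlet term and the remainder is bounded by $\mathcal M_1^{2/p}\bar{\mathcal N}_t^{(p-2)/p}\mathcal E_t^{2/p}$; this is also precisely where the unbounded tails of $\a$ are absorbed into $\mathcal M_0$ and $\mathcal M_1$.

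A secondary imprecision: the transfer step of Proposition~\ref{prop4.3} outputs a \emph{forward time integral}, so one never obtains a pointwise inequality $E'(t)\le -E(t)/(C\mathscr M'L^2)$. The correct statement is $\bar{\mathcal E}_t \le C\,\mathcal M_4(t)\,L^2\,(-\partial_t\bar{\mathcal E}_t)$ for the $K$-averaged energy, after which Gronwall applies to $\bar{\mathcal E}_t$ and one recovers $\mathcal E_{t+1}\le C\bar{\mathcal E}_t$ from the monotonicity of $\mathcal E$.
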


\subsection{Moderation of the environment} \label{sec:moderationenvt}

In this section, we adapt the arguments of Mourrat and Otto~\cite[Proposition 4.6]{MO16} to environments which are not bounded from above. Using the terminology introduced in~\cite[Definition 3.1]{MO16}, we show that the environment $\a$ is $(w , C K)$-moderate. The proof of Proposition~\ref{prop4.3} is a notational modification of~\cite{MO16} and is written below for completeness.

\begin{proposition}[$(w, C K)$-moderation] \label{prop4.3}
There exists a constant $C := C(d) > 0$ such that, for every $t \geq 0$ and every solution $u : (0 , \infty) \times \mathbb{T}_L \to \R$ of the parabolic equation\begin{equation*}
    \partial_t u - \nabla \cdot \a \nabla u = 0 ~~~\mbox{in} ~~~ (0 , \infty) \times \mathbb{T}_L,
\end{equation*}
one has the inequality, for any edge $e \in E \left(\mathbb{T}_L\right)$,
\begin{equation} \label{eq:11140912}
    w(t , e)^2 (\nabla u(t , e))^2 \leq C \sum_{e' \cap e \neq \emptyset}  \int_t^\infty K_{t - s} \a(s , e') (\nabla u(s , e'))^2 \, ds.
\end{equation}
\end{proposition}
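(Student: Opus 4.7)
The strategy is to start from the pointwise identity
\[
\nabla u(t,e) \;=\; \nabla u(s,e) \;-\; \int_t^s \partial_r \nabla u(r,e)\,dr
\]
for $s > t$, square it using $(a+b)^2 \leq 2a^2+2b^2$, and then integrate the resulting bound in~$s$ against the weight $k_{s-t}(\a(s,e)\wedge 1)/D(s)$, where $D(s) := (s-t)^{-1}\sum_{e'\cap e\neq\emptyset}\int_t^s(\a(r,e')\vee 1)\,ds'$ is precisely the denominator appearing in the definition~\eqref{eq:17580612} of $w(t,e)^2$. By construction, the left-hand side then reproduces $w(t,e)^2(\nabla u(t,e))^2$.

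To control the time integral of $\partial_r\nabla u(r,e)$, the key observation is that from the parabolic equation one has $\partial_r \nabla u(r,e) = \nabla(\nabla\cdot \a\nabla u)(r,e)$, which is a local sum with bounded coefficients of terms $\a(r,e')\nabla u(r,e')$ with $e'\cap e\neq\emptyset$. Splitting $\a = \a^{1/2}\cdot\a^{1/2}$ and applying the Cauchy--Schwarz inequality twice — once inside the time integral, once across the sum over neighbouring edges — yields a bound of the shape
\[
\left(\int_t^s \partial_r \nabla u(r,e)\,dr\right)^{\!2} \leq C \Bigl(\sum_{e'\cap e\neq\emptyset}\int_t^s \a(r,e')\,dr\Bigr)\Bigl(\sum_{e'\cap e\neq\emptyset}\int_t^s \a(r,e')(\nabla u(r,e'))^2\,dr\Bigr),
\]
and since $\a \leq \a\vee 1$, the first bracket equals $(s-t)D(s)$ up to constants.

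After multiplying by $k_{s-t}(\a(s,e)\wedge 1)/D(s)$ and integrating in $s$, the first piece becomes
\[
2\int_t^\infty \frac{k_{s-t}(\a(s,e)\wedge 1)}{D(s)}(\nabla u(s,e))^2\,ds,
\]
which is handled by $\a(s,e)\wedge 1\leq \a(s,e)$ together with the pointwise lower bound $D(s)\geq 1$ (this uses that $\a(\cdot,e')\vee 1\geq 1$ and that $e$ itself is counted in the sum over $e'\cap e\neq\emptyset$). The second piece, after the factor $D(s)^{-1}$ cancels the bracket $(s-t)D(s)$ and one uses $\a(s,e)\wedge 1 \leq 1$, reduces to
\[
\int_t^\infty k_{s-t}(s-t)\sum_{e'\cap e\neq\emptyset}\int_t^s \a(r,e')(\nabla u(r,e'))^2\,dr\,ds.
\]
A Fubini swap reorders this as $\sum_{e'\cap e\neq\emptyset}\int_t^\infty \a(r,e')(\nabla u(r,e'))^2\bigl(\int_r^\infty (s-t)k_{s-t}\,ds\bigr)\,dr$, and the change of variables $s' = s-t$ together with the definition~\eqref{def.kt} of $K$ gives $\int_r^\infty(s-t)k_{s-t}\,ds = \int_{r-t}^\infty s'k_{s'}\,ds' \leq K_{r-t}$. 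Both pieces then combine into the right-hand side of~\eqref{eq:11140912}.

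The main obstacle — and the point where the specific form of the definition~\eqref{eq:17580612} becomes essential — lies in arranging the weights so that exactly the desired inequality emerges. The factor $\a(s,e)\wedge 1$ in the numerator of $w^2$ is precisely what is needed to absorb the nonlinear contribution from the time integral of $\partial_r\nabla u$ (where the factor of $1$ rather than $\a(s,e)$ is crucial, since otherwise one would not obtain $\a(r,e')$ linearly on the right), while the ``$\vee 1$'' in the denominator $D(s)$ is what guarantees $D(s)\geq 1$ and allows the first piece to be controlled by an integral against $k_{s-t}\leq K_{s-t}$. Once these weights are chosen as in Mourrat--Otto, the remainder of the argument is a careful but otherwise routine sequence of Cauchy--Schwarz applications and a single Fubini swap.
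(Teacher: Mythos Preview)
Your proposal is correct and follows essentially the same approach as the paper's proof: both split $(\nabla u(t,e))^2$ via $(a+b)^2\leq 2a^2+2b^2$ with $a=\nabla u(s,e)$ and $b=\nabla u(s,e)-\nabla u(t,e)$, handle the first piece using $\a\wedge 1\leq \a$ and $D(s)\geq 1$, and control the second piece via the parabolic equation and Cauchy--Schwarz to produce the factor $(s-t)D(s)$ that cancels against the weight. The only cosmetic difference is that the paper splits $\nabla u(s,e)-\nabla u(t,e)$ into the two vertex contributions $u(\cdot,x)$ and $u(\cdot,y)$ before applying Cauchy--Schwarz, whereas you work directly with $\partial_r\nabla u(r,e)=\nabla(\nabla\cdot\a\nabla u)(r,e)$; the resulting bounds are identical.
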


\begin{proof}
Following the proof of~\cite[Proposition 4.6]{MO16}, we fix an edge $e \in E\left( \mathbb{T}_L \right)$ and first estimate
\begin{align} \label{eq:16110612}
w(t , e)^2 (\nabla u(t , e))^2 & =  \int_t^{\infty} k_{s-t} \frac{\a(s,e) \wedge 1}{(s - t)^{-1} \sum_{e' \cap e \neq \emptyset} \int_t^s \a(s',e') \vee 1 \, ds'} (\nabla u(t , e))^2 \, ds \\
                        & \leq 2  \int_t^{\infty} k_{s-t} \frac{\a(s,e) \wedge 1}{(s - t)^{-1} \sum_{e' \cap e \neq \emptyset} \int_t^s \a(s',e') \vee 1 \, ds'} (\nabla u(s , e))^2 \, ds \notag \\
                        & \quad + 2 \int_t^{\infty} k_{s-t} \frac{\a(s,e) \wedge 1}{(s - t)^{-1} \sum_{e' \cap e \neq \emptyset} \int_t^s \a(s',e') \vee 1 \, ds'} (\nabla u(s , e) - \nabla u(t , e) )^2 \, dt. \notag
\end{align}
The first term in the right-hand side can be estimated as follows
\begin{equation} \label{eq:16110312}
\int_t^{\infty} k_{s-t} \frac{\a(s,e) \wedge 1}{(t - s)^{-1} \sum_{e' \cap e \neq \emptyset} \int_t^s \a(s',e') \vee 1 \, ds'} (\nabla u(s , e))^2 \, ds \\ \leq  \int_t^{\infty} k_{s-t} \a(s,e) (\nabla u(s , e))^2 \, ds.
\end{equation}
We next use the identity $\partial_t u = \nabla \cdot \a \nabla u$ and denote by $x$ and $y$ the two endpoints of $e$. We then write
\begin{align*}
    (\nabla u(s , e) - \nabla u(t , e) )^2 & \leq 2 ( u(s , x) - u(t , x) )^2 + 2 ( u(s , y) - u(t , y) )^2  \\
    & \leq 2 \left(  \int_t^s \nabla \cdot \a \nabla u(s' , x)\, ds' \right)^2 + 2 \left(  \int_t^s \nabla \cdot \a \nabla u(s' , y)\, ds' \right)^2.
\end{align*}
We next observe that, by the Cauchy-Schwarz inequality,
\begin{align*}
    \left( \int_t^s \nabla \cdot \a \nabla u(s' , x) \, ds' \right)^2 & \leq C \sum_{e' \ni x} \left( \int_t^s \left| \a(s' , e') \nabla u(s' , e') \right| \, ds' \right)^2 \\
    & \leq C \sum_{e' \ni x} \left( \int_t^s \a(s' , e') \, ds' \right) \left( \int_t^s \a(s' , e') (\nabla u(s' , e'))^2  \, ds' \right).
\end{align*}
A combination of the two previous displays yields
\begin{equation*}
    (\nabla u(s , e) - \nabla u(t , e) )^2  \leq C \sum_{e' \cap e \neq \emptyset} \left( \int_t^s \a(s' , e') \, ds' \right) \left( \int_t^s \a(s', e') (\nabla u(s' , e') )^2 \, ds' \right).
\end{equation*}
We thus obtain
\begin{equation*}
    \frac{\a(s,e) \wedge 1}{(s - t)^{-1} \sum_{e' \cap e \neq \emptyset} \int_t^s \a(s',e') \vee 1 \, ds'} (\nabla u(s , e) - \nabla u(t , e) )^2 \leq C (s - t) \sum_{e' \cap e \neq \emptyset} \int_t^s \a(s', e') (\nabla u(s' , e') )^2 \, ds'.
\end{equation*}
Combining the previous estimate with~\eqref{eq:16110612} and~\eqref{eq:16110312}, we deduce that
\begin{align*}
    w(t , e)^2 (\nabla u(t , e))^2 &
    \leq C \int_t^{\infty} k_{s-t} \a(s,e) (\nabla u(s , e))^2 \, ds \\
    & \qquad + C \sum_{e' \cap e \neq \emptyset} \int_{t}^\infty k_{s-t} (s - t) \int_t^s \a(s', e') (\nabla u(s' , e') )^2 \, ds' \\
    &  \leq C  \sum_{e' \cap e \neq \emptyset}\int_t^\infty K_{s - t} \a(s , e') (\nabla u(s , e'))^2 \, ds.
\end{align*}
The proof of Proposition~\ref{prop4.3} is complete.
\end{proof}

\subsection{Stochastic integrability for the moderated environment}

In this section, we establish stochastic integrability estimates for the moderated environment $w$, and prove that all the moments of $w$ and $w^{-1}$ are finite.

\begin{proposition}[Stochastic integrability for the moderated environment] \label{propmoderatedenvtstochint}
There exist two constants $c := c(d , V) > 0$ and $C := C(d , V) < \infty$ such that, for any $T \geq 1$, any time $t \geq 0$ and any edge $e \in E \left( \mathbb{T}_L \right)$,
\begin{equation} \label{eq:18500612}
    \P \left[ w(t , e) \leq \frac{1}{T} \right] \leq C \exp \left( - c (\ln T)^{\frac{r}{r-2}} \right)
\end{equation}
and
\begin{equation} \label{eq:18510612}
    \P \left[ w(t , e) \geq T \right] \leq C \exp \left( - c T^{\frac{2r}{r-2}} \right).
\end{equation}
\end{proposition}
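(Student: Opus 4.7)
The upper tail estimate~\eqref{eq:18510612} is essentially deterministic: the numerator in the defining integral of $w(t,e)^2$ is always at most $1$, while the denominator is at least $(s-t)^{-1}\int_t^s \a(s',e)\vee 1\,ds'\geq 1$ (taking $e'=e$ in the sum and using $\a\vee 1 \geq 1$), so $w(t,e)^2 \leq \int_0^\infty k_u\,du \leq 1$ pointwise, and $\P[w(t,e)\geq T]=0$ for $T\geq 2$, yielding~\eqref{eq:18510612} trivially.

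The substance is the lower tail~\eqref{eq:18500612}, and the plan mirrors Proposition~\ref{prop3.4}: restrict, control the denominator, and combine the exit estimate of Proposition~\ref{prop3.4} with a continuity-in-time argument. For any $M\geq 1$, restricting the defining integral to $s\in[t,t+M]$ and using $k_{s-t}\geq c(1+M)^{-(p+3)}$ together with $(s-t)^{-1}\sum_{e'\cap e\neq\emptyset}\int_t^s \a(s',e')\vee 1\,ds'\leq C(1+\mathscr{A}_M)$, where $\mathscr{A}_M:=\sum_{e'\cap e\neq\emptyset}\sup_{s\in[t,t+M]}\a(s,e')$, I first obtain the deterministic lower bound
\begin{equation*}
    w(t,e)^2 \geq \frac{c}{(1+M)^{p+3}(1+\mathscr{A}_M)}\int_t^{t+M}\a(s,e)\wedge 1\,ds.
\end{equation*}

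Next I choose three auxiliary scales $M:=T^{1/(p+3)}$, $K:=c_K(\ln T)^{r/((r-1)(r-2))}$ and $\eta:=c_\eta(\ln T)^{-r/(r-2)}$ (with $c_K,c_\eta>0$ small) and work on the intersection of: (a) the exit event from a version of Proposition~\ref{prop3.4} with threshold $3R_V$ in place of $R_V$ (the proof is identical up to constants), asserting that there exists $s^\star\in[t,t+M]$ with $|\nabla\phi_L(s^\star,e)|\geq 3R_V$; (b) the gradient event from Proposition~\ref{propositionsubdynamic}, asserting $|\nabla\phi_L(s,e')|\leq K$ for all neighboring $e'$ on $[t,t+M]$, which in particular gives $\mathscr{A}_M\leq CK^{r-2}$ via Assumption~\ref{assumption1}; and (c) a Brownian-fluctuation event produced by the reflection principle and a union bound over a time grid of spacing $\eta$ in $[t,t+M]$, asserting $\sup_{|s-u|\leq\eta}|\nabla B_s(e)-\nabla B_u(e)|\leq R_V/4$ for every grid point $u$. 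By construction $K^r\geq(\ln T)^{r/(r-2)}$ and $1/\eta\geq(\ln T)^{r/(r-2)}$, so each complement has probability at most $C\exp(-c(\ln T)^{r/(r-2)})$ once the polynomial-in-$T$ prefactors from the union bounds (which are absorbed because $r/(r-2)>1$) are accounted for. On the intersection, writing the SDE~\eqref{def.Langevindynamics} in integral form and bounding the drift of $\nabla\phi_L(\cdot,e)$ by $CK^{r-1}$ gives $|\nabla\phi_L(s,e)-\nabla\phi_L(s^\star,e)|\leq CK^{r-1}|s-s^\star|+\sqrt 2\,R_V/4$; the smallness of $c_K,c_\eta$ makes this at most $R_V$ for $s$ in an $\eta$-window of the grid point nearest $s^\star$, so $|\nabla\phi_L(s,e)|\geq 2R_V$ there, and by the definition of $R_V$ we conclude $\a(s,e)\geq 1$ on this window, hence $\int_t^{t+M}\a(s,e)\wedge 1\,ds\geq \eta$. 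Substituting into the Step 1 bound and using $\mathscr{A}_M\leq CK^{r-2}$ gives $w(t,e)^2\geq c\eta/[(1+M)^{p+3}K^{r-2}]\geq cT^{-1}(\ln T)^{-C}\geq T^{-2}$ for $T$ large.

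The main obstacle is the simultaneous calibration of the three scales $M,K,\eta$: the Brownian tail forces $1/\eta\geq(\ln T)^{r/(r-2)}$, the drift-continuity step forces $\eta\leq cK^{-(r-1)}$, and the gradient tail forces $K^r\geq(\ln T)^{r/(r-2)}$. These three constraints are jointly satisfied only at the critical scaling chosen above, and making this balance explicit (together with the boundary effects at the endpoints of $[t,t+M]$, which are handled by shrinking the window to $[t,t+M-2\eta]$) is the technical heart of the argument.
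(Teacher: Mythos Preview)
Your argument is correct and follows essentially the same strategy as the paper's proof: for the lower tail you work on the intersection of an exit event (Proposition~\ref{prop3.4}), a uniform gradient bound (Proposition~\ref{propositionsubdynamic}), and a Brownian regularity event, and on this intersection you produce a short time window where $\a(\cdot,e)\geq 1$, which gives a polynomial-in-$T$ lower bound on $w(t,e)$. The paper does exactly this, with the simpler bookkeeping of taking the time horizon equal to $T$, the Brownian mesh equal to $1/T$, and the gradient control phrased directly as $V''+\sum|V'|\leq T/2$ rather than through an auxiliary scale $K$; this yields $w^2\geq c/T^{p+5}$ on the good event, and the substitution $T\mapsto T^{1/q}$ converts any such polynomial bound into the stated estimate. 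Your triple $(M,K,\eta)$ works for the same reason, though your claim that the scales are ``jointly satisfied only at the critical scaling'' is a slight overstatement---there is a range of admissible choices, and the paper's cruder choice already suffices.

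Your treatment of the upper tail~\eqref{eq:18510612} is in fact \emph{cleaner} than the paper's: you correctly observe that the integrand in~\eqref{eq:17580612} is deterministically at most $1$ (numerator $\leq 1$, denominator $\geq 1$ since the sum contains the term $e'=e$ and $\a\vee 1\geq 1$), whence $w(t,e)^2\leq\int_0^\infty k_s\,ds\leq 1$ pointwise and the bound is trivial. The paper instead bounds $w^2\leq\int k_{s-t}\,\a(s,e)\,ds$ and invokes Lemma~\ref{lemmastochint} together with the tail of $\a$, which is correct but unnecessary.
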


\begin{remark}
Proposition~\ref{propmoderatedenvtstochint} implies that, for any exponent $q > 0$, any time $t \geq 0$ and any edge $e \in E \left( \mathbb{T}_L \right)$,
\begin{equation*}
    \E \left[ w(t , e)^q \right] + \E \left[ w(t , e)^{-q} \right] < \infty.
\end{equation*}
\end{remark}

\begin{proof}
We first prove~\eqref{eq:18510612}. By the stationarity of the gradient of the Langevin dynamic $\phi_L$, it is sufficient to prove the result for $t= 0$. We first prove the following inclusion of events: there exists $c := c(d , V) > 0$ such that, for any $T \geq 1$,
\begin{multline} \label{eq:11030712}
    \left\{ w(0 , e)^2 \leq \frac{c}{T^{p+5}} \right\} \subseteq \left\{ \sup_{t \in [0,T]}  \left| \nabla \phi_L\left( t, e \right) \right| \leq  R_V \right\} \\ \bigcup \left\{ \sup_{t \in [0,T]}  V'' \left( \nabla \phi_L\left( t , e \right)\right) + \sum_{e' \cap e \neq \emptyset} \left| V' \left( \nabla \phi_L\left( t , e' \right) \right) \right|  \geq \frac{T}{2} \right\} \\ \bigcup \left\{ \sup_{\substack{t , t' \in [0,T]\\ |t - t'| \leq \frac{1}{T}}} \left| \nabla B_{t'} \left( e \right) - \nabla B_{t} \left( e \right) \right| \geq \frac{1}{2} \right\}.
\end{multline}
The inclusion~\eqref{eq:11030712} states that, in order for $w(0 ,e )$ to be small, the dynamic $\nabla \phi_L(\cdot , e)$ has to stay in the interval $[-R_V , R_V]$ for a long time (this behavior is ruled out by Proposition~\ref{prop3.4}), or must behave very irregularly, this condition is represented by the second and third events in the right-hand side of~\eqref{eq:11030712}, and can only happen with small probability.

We first prove~\eqref{eq:11030712}. To this end, we will prove the following implication: there exists $c := c(d , V) > 0$ such that, for any $T \geq 1$,
\begin{multline} \label{eq:17260712}
    \sup_{t \in [0,T]}  \left| \nabla \phi_L\left( t, e \right) \right| \geq  R_V, ~ \sup_{t \in [0,T]} V'' \left( \nabla \phi_L\left( t , e \right)\right)  +  \sum_{e' \cap e \neq \emptyset} \left| V' \left( \nabla \phi_L\left( t , e' \right) \right) \right| \leq \frac{T}{2} \\ ~ \mbox{and} ~ \sup_{\substack{t , t' \in [0,T]\\ |t - t'| \leq \frac{1}{T}}} \left| \nabla B_{t'} \left( e \right) - \nabla B_{t} \left( e \right) \right| \leq \frac12 \implies w(0 , e)^2 \geq \frac{c}{T^{p+5}}.
\end{multline}
We assume that the event in the left-hand side is satisfied and let $t \in [0 , T]$ be such that $\left| \nabla \phi_L\left( t, e \right) \right| \geq  R_V$. Using the definition of the Langevin dynamic~\eqref{def.Langevindynamics}, we see that, for any time $s \in \left[t - \frac{1}{2T} , t + \frac{1}{2T} \right]$,
\begin{align*}
    \left| \nabla \phi_L(s , e) - \nabla \phi_L(t , e) \right| &
    \leq \left| \int_{t}^s \nabla \left( \nabla \cdot V'(\nabla \phi_L) \right)(s',e)  \, ds' \right| + \left| \nabla B_t(e) - \nabla B_s(e)\right| \\
    & \leq \int_{t - \frac{1}{2T}}^{t + \frac{1}{2T}} \sum_{e' \cap e \neq \emptyset}\left| V' \left( \nabla \phi_L\left( s' , e' \right) \right) \right| \, ds' + \frac12 \\
    & \leq 1.
\end{align*}
Using the assumption $R_V \geq 2$ which follows from its definition~\eqref{def.RV}, we deduce that, for any $s \in \left[t - \frac{1}{2T} , t + \frac{1}{2T} \right]$, $\left| \nabla \phi_L(s , e) \right| \geq \frac{R_V}{2}$. This implies, for any  $s \in \left[t - \frac{1}{2T} , t + \frac{1}{2T} \right]$,
\begin{equation*}
    \a(s , e) = V''(\nabla \phi_L(s , e)) \geq 1.
\end{equation*}
The left-hand side of~\eqref{eq:17260712} yields the upper bound, for any $s \in [0,T]$,
\begin{equation*}
     \a(s , e) \leq \frac{T}{2}.
\end{equation*}
A combination of the two previous displays with the definition of $w$ stated in~\eqref{eq:17580612} and the definition of~$k$ stated in~\eqref{def.kt} implies, for any $T \geq 1$,
\begin{align*}
    w(0, e)^2 & = \int_0^{\infty} k_{s} \frac{\a(s, e) \wedge 1}{s^{-1} \sum_{e' \cap e \neq \emptyset}\int_0^s \a(s',e') \vee 1 \, ds'} \, ds \\
    & \geq \int_{t - \frac{1}{2T}}^{t + \frac{1}{2T}} k_{s} \frac{\a(s, e) \wedge 1}{s^{-1} \sum_{e' \cap e \neq \emptyset}\int_0^s \a(s',e') \vee 1 \, ds'} \, ds \\
    & \geq \frac{2}{T}  \int_{t - \frac{1}{2T}}^{t + \frac{1}{2T}} k_{s} \\
    & \geq \frac{c}{T^{p+5}}.
\end{align*}
The proof of~\eqref{eq:17260712}, and thus of~\eqref{eq:11030712} is complete. We next estimate the probabilities of the three events in the right-hand side of~\eqref{eq:11030712}. For the first one, we use Proposition~\ref{prop3.4} and write, for any $T \geq 1$,
\begin{equation*}
    \P \left(  \sup_{t \in [0,T]}  \left| \nabla \phi_L\left( t, e \right) \right| \leq  R_V \right) \leq C \exp \left( - c \left( \ln T \right)^{\frac{r}{r-2}} \right).
\end{equation*}
For the second term, we use Assumption~\ref{assumption1} on the potential $V$ and Proposition~\ref{propositionsubdynamic} to obtain that, for any $T \geq 1$,
\begin{align*}
    \P \left( \sup_{t \in [0,T]} \sum_{e' \cap e \neq \emptyset} \left| V' \left( \nabla \phi_L\left( t , e' \right) \right) \right|  + V'' \left( \nabla \phi_L\left( t , e \right)\right)\geq \frac{T}{2}  \right) & \leq C T \exp \left( - c  T^{\frac{r}{r-1}}\right) + C T \exp \left( - c  T^{\frac{r}{r-2}}\right) \\ &
    \leq C \exp \left( - c  T^{\frac{r}{r-1}}\right).
\end{align*}
For the third term, we note that, for any $T \geq 1$,
\begin{align*}
    \P \left( \sup_{\substack{t , t' \in [0,T]\\ |t - t'| \leq \frac{1}{T}}} \left| \nabla B_{t'} \left( e \right) - \nabla B_{t} \left( e \right) \right| \geq \frac{1}{2} \right) & 
    \leq  \sum_{l = 0}^{\lceil T^2 \rceil} \P \left( \sup_{t \in \left[ \frac{l-1}{T} , \frac{l+1}{T} \right] } \left| \nabla B_t(e) - \nabla B_{\frac{l}{T}}(e) \right| \geq \frac{1}{4} \right) \\
    & \leq (T^2+2) \P \left( \sup_{t \in \left[ 0 , \frac{2}{T} \right] } \left| \nabla B_t(e) - \nabla B_{\frac{1}{T}}(e) \right| \geq \frac{1}{4} \right) \\
    & \leq (T^2+2) \P \left( \sup_{t \in \left[ 0 , 2 \right] } \left| \nabla B_t(e) - \nabla B_{1}(e) \right| \geq \frac{\sqrt{T}}{4} \right) \\
    & \leq C (T^2+2) \exp \left( - c T \right).
\end{align*}
Combining the three previous displays with~\eqref{eq:11030712} yields, for any $T \geq 1$,
\begin{align}
    \P \left( w(0 , e) \leq \frac{c}{T^{p+5}} \right) & \leq  C \exp \left( - c \left( \ln T \right)^{\frac{r}{r-2}} \right) + C \exp \left( - c  T^{\frac{r}{r-1}}\right) + C (T^2+1) \exp \left( - c T \right) \\
    & \leq C \exp \left( - c \left( \ln T \right)^{\frac{r}{r-2}} \right). \notag
\end{align}
This implies~\eqref{eq:18500612}. To prove~\eqref{eq:18510612}, we note that, using the stationarity of the gradient of the Langevin dynamic and Assumption~\ref{assumption1}, for any $t \geq 0$ and any $e \in E \left( \mathbb{T}_L\right)$,
\begin{equation*}
    \P \left[ \a(t , e) \geq T \right] \leq C \exp \left( - c T^{\frac{r}{r-2}} \right).
\end{equation*}
We next observe that, from the definitions~\eqref{def.kt} and~\eqref{eq:17580612}, 
\begin{equation*}
    w(t , e)^2 \leq \int_t^{\infty} k_{s-t}\a(s, e)\, ds ~~\mbox{and}~~ \int_t^{\infty} k_{s-t}\, ds = \int_0^{\infty} k_{s}\, ds < \infty.
\end{equation*}
Combining the two previous displays with Lemma~\ref{lemmastochint} (and $f(t) = k_t / \int_0^\infty k_s \, ds$) completes the proof of~\eqref{eq:18510612}.
\end{proof}

\subsection{Anchored Nash estimate in the torus} \label{sec:sec4.4}

In this section, we prove a finite-volume version of the anchored Nash estimate of~\cite[Theorem 2.1]{MO16} (see Theorem~\ref{theoremanchoredNash}). The result is stated below and we emphasize that it only requires a minor adaptation of the proof of~\cite[Theorem 2.1]{MO16}.

\begin{proposition}[Anchored Nash estimate on the torus]\label{theoremanchoredNashtorus}
There exists $C := C(d) < \infty$ such that, for any function $u : \mathbb{T}_L \to \R$ satisfying $\sum_{x \in \mathbb{T}_L} u(x) = 0$ and any time $t \geq 0$,
\begin{equation*}
    \left\| u \right\|_{L^2 \left( \mathbb{T}_L \right)} \leq C \left( \mathcal{M}_{p'}(t)^{\frac 12}  \left\| w(t , \cdot) \nabla u \right\|_{L^2 \left( \mathbb{T}_L \right)} \right)^{\alpha} \left\| u \right\|_{L^1 \left( \mathbb{T}_L \right)}^\beta \| |x|^{p/2}_* u \|_{L^2 \left( \mathbb{T}_L \right)}^\gamma,
\end{equation*}
\end{proposition}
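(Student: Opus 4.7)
The plan is to adapt the proof of Theorem 2.1 of Mourrat--Otto (Theorem~\ref{theoremanchoredNash}) to the periodic setting. There are two places where the torus-versus-$\Zd$ distinction matters: the maximal function $M(w^{-p'})$ of MO16 is replaced by the local quantity $\sup_{r \in \{1,\ldots,L\}} \|w^{-1}\|_{\underline{L}^{p'}(\Lambda_r)}^2$, and the global discrete Gagliardo-Nirenberg-Sobolev inequality on the torus (Proposition~\ref{propDGS}) replaces its full-space counterpart. The latter substitution is where the extra prefactor $(1 + \|w\|_{\underline{L}^{\sigma_d}(\mathbb{T}_L)} \|w^{-1}\|_{\underline{L}^{\tau_d}(\mathbb{T}_L)})^2$ appearing in $\mathcal{M}_{p'}(t)$ is produced.

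First, I would fix $r \in \{1,\ldots,L\}$ (to be optimized at the end) and split
\begin{equation*}
    \|u\|_{L^2(\mathbb{T}_L)}^2 \;\leq\; \|u\|_{L^2(\Lambda_r)}^2 \;+\; r^{-p} \||x|_*^{p/2} u\|_{L^2(\mathbb{T}_L)}^2,
\end{equation*}
so that the contribution away from the anchor is absorbed by the last factor of the target inequality. For the interior piece, I would apply the GNS inequality with $\theta = 1$ and exponents $(\kappa_d, \lambda_d)$, which uses $\sum_x u(x) = 0$, then Hölder's inequality with the conjugate pair $\tfrac{1}{\lambda_d} = \tfrac{1}{2} + \tfrac{1}{\tau_d}$ on the torus to introduce the weight:
\begin{equation*}
    \|u\|_{\underline{L}^{\kappa_d}(\mathbb{T}_L)} \;\leq\; C L \, \|\nabla u\|_{\underline{L}^{\lambda_d}(\mathbb{T}_L)} \;\leq\; C L \, \|w \nabla u\|_{\underline{L}^{2}(\mathbb{T}_L)} \, \|w^{-1}\|_{\underline{L}^{\tau_d}(\mathbb{T}_L)}.
\end{equation*}
To localize to $\Lambda_r$, I would use Hölder with the pair $\tfrac{1}{\kappa_d} + \tfrac{1}{\sigma_d} = \tfrac{1}{2}$, together with a local Hölder step on $\Lambda_r$ with exponent $p' > d$ applied to $\nabla u = (w \nabla u) \cdot w^{-1}$, producing the local maximal factor $\|w^{-1}\|_{\underline{L}^{p'}(\Lambda_r)}$. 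Combining these two Hölder steps with the global GNS above yields a weighted Sobolev-type bound of the form
\begin{equation*}
    \|u\|_{L^2(\Lambda_r)} \;\leq\; C \, \mathcal{M}_{p'}(t)^{1/2} \, \|w \nabla u\|_{L^2(\mathbb{T}_L)} \, r^{1 - d/\sigma_d'}
\end{equation*}
for the appropriate exponent, where the composite prefactor matches the definition of $\mathcal{M}_{p'}(t)$.

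Finally, I would interpolate this weighted Sobolev inequality with the $L^1$ norm using the Nash interpolation $\|u\|_{L^2}^{1+2/d} \leq \|u\|_{L^{2^*}} \|u\|_{L^1}^{2/d}$, in the precise form used in \cite[proof of Theorem 2.1]{MO16} and encoded in \eqref{eq:3.7}. Optimizing over $r \in \{1,\ldots,L\}$ to balance the two contributions of the initial split produces the exponents $(\alpha, \beta, \gamma)$ defined in~\eqref{def.alphabetagamma}. The main obstacle is the careful book-keeping required to extract the precise prefactor $\mathcal{M}_{p'}(t)^{1/2}$: the two Hölder steps on the weight — one global (giving $\|w\|_{\underline{L}^{\sigma_d}}\|w^{-1}\|_{\underline{L}^{\tau_d}}$) and one local (giving $\|w^{-1}\|_{\underline{L}^{p'}(\Lambda_r)}$) — must be combined so that the final multiplicative constant is exactly the one stated in $\mathcal{M}_{p'}(t)$, independent of $r$. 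Once this is verified, the remainder of the argument is a notational adaptation of MO16 to the finite-volume setting.
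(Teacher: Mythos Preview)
Your approach differs from the paper's, and the sketch as written has gaps.

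The paper does \emph{not} redo the Mourrat--Otto argument on the torus. Instead it extends $u$ and $w(t,\cdot)$ periodically to $\Zd$, multiplies by a cutoff $\eta$ with $\indc_{\Lambda_L}\le\eta\le\indc_{\Lambda_{4L/3}}$ and $|\nabla\eta|\le C/L$, and applies Theorem~\ref{theoremanchoredNash} to $\eta u$ as a black box. Periodicity gives $M(w^{-p'})^{1/p'}\simeq\sup_{r\le L}\|w^{-1}\|_{\underline L^{p'}(\Lambda_r)}$, and the norms $\|\eta u\|_{L^1}$, $\||x|_*^{p/2}\eta u\|_{L^2}$ are trivially comparable to their torus counterparts. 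The only nontrivial point is the gradient term: expanding $\nabla(\eta u)=\eta\nabla u+u\nabla\eta$ produces a cross term of size $L^{-1}\|wu\|_{L^2}$, which is then bounded by H\"older and the mean-zero Gagliardo--Nirenberg--Sobolev inequality on the torus as
\[
L^{-1}\|wu\|_{\underline L^2(\mathbb{T}_L)}\le L^{-1}\|w\|_{\underline L^{\sigma_d}}\|u\|_{\underline L^{\kappa_d}}\le C\|w\|_{\underline L^{\sigma_d}}\|w^{-1}\|_{\underline L^{\tau_d}}\|w\nabla u\|_{\underline L^2}.
\]
This is exactly where the factor $(1+\|w\|_{\underline L^{\sigma_d}}\|w^{-1}\|_{\underline L^{\tau_d}})$ in $\mathcal{M}_{p'}(t)$ comes from. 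No anchor radius $r$ is ever chosen or optimized in the torus proof.

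In your scheme, by contrast, that factor has no clear origin. Your steps~2--3 give a global bound $\|u\|_{\underline L^{\kappa_d}(\mathbb{T}_L)}\le CL\|w^{-1}\|_{\underline L^{\tau_d}}\|w\nabla u\|_{\underline L^2}$, but step~4 (``localize to $\Lambda_r$ via H\"older with $1/\kappa_d+1/\sigma_d=1/2$, together with a local H\"older with exponent $p'$'') does not make sense as written: from an $L^{\kappa_d}(\mathbb{T}_L)$ bound you can only get $\|u\|_{L^2(\Lambda_r)}\le r^{d/\sigma_d}\|u\|_{L^{\kappa_d}(\mathbb{T}_L)}$, with no place for a second weighted H\"older producing $\|w^{-1}\|_{\underline L^{p'}(\Lambda_r)}$. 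The exponent ``$\sigma_d'$'' in your displayed bound is undefined, and more importantly the product structure of $\mathcal{M}_{p'}(t)$ --- a \emph{global} factor times a \emph{local} supremum --- does not emerge from your chain of inequalities. A direct torus adaptation of the MO16 proof would use local weighted Sobolev inequalities on the balls $\Lambda_r$, not a global GNS on $\mathbb{T}_L$, and would then have to deal separately with the regime where the MO16-optimal radius exceeds $L$; you have not addressed this. The paper's cutoff-and-black-box reduction sidesteps all of these issues.
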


\begin{proof}
In this proof, we fix a time $t \geq 0$, identify the torus $\mathbb{T}_L$ with the box $\Lambda_L$ and extend the functions $u$ and the moderated environments $w$ periodically to the lattice $\Zd$. The periodicity of $w$ implies that there exists a constant $c := c(d) > 0$ such that, using the notation~\eqref{def.euclideanballs} for the maximal function,
\begin{equation} \label{eq:11460812}
    c M (w^{-p'}(t , \cdot))^{\frac1{p'}} \leq \sup_{r \in \{ 1 , \ldots, L\}} \| w^{-1} (t , \cdot)\|_{\underline{L}^{p'} (\Lambda_r)} \leq M (w^{-p'}(t , \cdot))^{\frac1{p'}}.
\end{equation}
We then let $\eta : \Zd \to \R$ be a cutoff function satisfying
\begin{equation} \label{def.eta1154}
    \indc_{\Lambda_L} \leq \eta \leq \indc_{\Lambda_{\frac{4}{3}L}} ~~\mbox{and} ~~ \left| \nabla \eta \right| \leq \frac{C}{L}.
\end{equation}
We next apply Theorem~\ref{theoremanchoredNash} with the finitely supported function $\eta u : \Zd \to \R$ and use the lower bound~\eqref{eq:11460812} to deduce that
\begin{equation} \label{eq:17010812}
    \left\| \eta u \right\|_{L^2 \left( \Zd \right)} \leq C \left( \left( \sup_{r \in \{ 1 , \ldots, L\}} \| w^{-1}(t , \cdot) \|_{\underline{L}^{p'} (\Lambda_r)} \right)\left\| w(t , \cdot) \nabla ( \eta u ) \right\|_{L^2 \left( \Zd \right)} \right)^{\alpha} \left\| \eta u \right\|_{L^1 \left( \Zd \right)}^\beta \| |x|^{p/2}_* \eta u \|_{L^2 \left( \Zd \right)}^\gamma.
\end{equation}
Using the periodicity of the function $u$ and the definition of the cutoff function $\eta$, we have the upper bounds, for some $C := C(d) < \infty$,
\begin{equation} \label{eq:170208122}
    \left\| u \right\|_{L^2 \left( \mathbb{T}_L \right)}  \leq \left\| \eta u \right\|_{L^2 \left( \Zd \right)}, ~~ \left\| \eta u \right\|_{L^1(\Zd)} \leq C \left\| u \right\|_{L^1(\mathbb{T}_L)}, ~\mbox{and} ~  \| |x|^{p/2}_* \eta u \|_{L^2 \left( \Zd \right)} \leq  C \| |x|^{p/2}_* u \|_{L^2 \left( \mathbb{T}_L \right)}.
\end{equation}
So that there only remains to treat the term $\left\| w \nabla ( \eta u ) \right\|_{L^2 \left( \Zd \right)}$. Expanding the discrete gradient and using the properties of the cutoff function $\eta$ stated in~\eqref{def.eta1154}, we obtain
\begin{equation} \label{eq:16180812}
    \sum_{e \in E \left( \Zd \right)} w(t,e)^2 \nabla ( \eta u )(e)^2 \leq C \sum_{e \in E \left( \Zd \right)} (w(t,e) \eta(e) \nabla u(e))^2 + \frac{C}{L^2} \sum_{e \in E ( \Lambda_{\frac{4}{3}L} )} (w(t,e) u(e))^2.
\end{equation}
where we recall the notation $\eta (e) = (\eta(x) + \eta(y))/2$ and $u(e) = (u(x) + u(y))/2$ for $e = ( x , y) \in E\left( \Zd \right)$.
Using the periodicity of the functions $u$ and $w$, we may rewrite the previous inequality as follows
\begin{equation*}
    \sum_{e \in E \left( \mathbb{T}_L\right)} w(t,e)^2 \nabla ( \eta u )(e)^2 \leq C \sum_{e \in E \left( \mathbb{T}_L\right)} (w(t,e) \nabla u(e))^2 + \frac{C}{L^2} \sum_{e \in E \left( \mathbb{T}_L\right)} (w(t,e) u(e))^2.
\end{equation*}
We then estimate the second term in the right-hand side. To this end, we will use the H\"{o}lder inequality and the Gagliardo-Nirenberg-Sobolev inequality. We recall the definitions of the four exponents $\lambda_d , \kappa_d, \sigma_d$ and $\tau_d$ introduced in~\eqref{explambdakappanu}, and apply first the H\"{o}lder inequality and then the Gagliardo-Nirenberg-Sobolev inequality (Proposition~\ref{propDGS} using that $\sum_{x \in \mathbb{T}_L} u(x) = 0$), then the H\"{o}lder inequality. We deduce that
\begin{align} \label{eq:16100812}
    \left( \frac{1}{\left|\mathbb{T}_L\right|}\sum_{e \in E \left( \mathbb{T}_L\right)} (w(t,e) u(e))^2\right)^{\frac 12} & \leq \left\| w(t , \cdot) \right\|_{\underline{L}^{\sigma_d}\left( \mathbb{T}_L \right)} \left\| u  \right\|_{\underline{L}^{\kappa_d} \left( \mathbb{T}_L \right)} \\
    & \leq CL\left\| w(t,\cdot) \right\|_{\underline{L}^{\sigma_d}\left( \mathbb{T}_L \right)} \left\| \nabla u  \right\|_{\underline{L}^{\lambda_d} \left( \mathbb{T}_L \right)} \notag \\
    & \leq CL \left\| w(t , \cdot) \right\|_{\underline{L}^{\sigma_d}\left( \mathbb{T}_L \right)} \left\| w^{-1}(t , \cdot) \right\|_{\underline{L}^{\tau_d}\left( \mathbb{T}_L \right)} \left\| w(t , \cdot) \nabla u  \right\|_{\underline{L}^{2} \left( \mathbb{T}_L \right)} . \notag 
\end{align}
Combining~\eqref{eq:16100812} with~\eqref{eq:16180812}, we deduce that
\begin{equation*}
    \left\| w(t , \cdot) \nabla ( \eta u ) \right\|_{L^2 \left( \Zd \right)}  \leq C (1 + \left\| w \right\|_{\underline{L}^{\sigma_d}\left( \mathbb{T}_L \right)} \| w^{-1} (t , \cdot)\|_{\underline{L}^{\tau_d}\left( \mathbb{T}_L \right)} ) \left\| w(t , \cdot) \nabla u  \right\|_{L^{2} \left( \mathbb{T}_L \right)}.
\end{equation*}
Combining the previous display with~\eqref{eq:17010812} and~\eqref{eq:170208122} completes the proof of Proposition~\ref{theoremanchoredNashtorus}.
\end{proof}

\subsection{Estimate on the $L^2$-norm of the heat kernel} \label{section:4.5}

Following~\cite[Proof of Theorem 3.2]{MO16}, we introduce the notation
\begin{equation*}
    \mathcal{E}_t := \sum_{x \in \mathbb{T}_L} P_\a (t , x)^2,~~ \mathcal{D}_t = \sum_{e \in E \left( \mathbb{T}_L \right)} \a(t , e) (\nabla P_\a(t , e))^2 ~~ \mbox{and}~~ \mathcal{N}_t := \sum_{x \in \mathbb{T}_L} | x |^{p}_* P_\a(t , x)^2,
\end{equation*}
as well as the moderated quantities
\begin{equation*}
    \bar{\mathcal{E}}_t := \int_t^\infty K_{s-t} \mathcal{E}_s \, ds, ~~ \bar{\mathcal{D}}_t := \int_t^\infty K_{s-t} \mathcal{D}_s \, ds ~~ \mbox{and}~~ \bar{\mathcal{N}}_t :=  \int_t^\infty K_{s-t} \mathcal{N}_s \, ds.
\end{equation*}
We note that the following identities hold
\begin{equation*}
    \partial_t \mathcal{E}_t = - 2\mathcal{D}_t, ~~ \partial_t \bar{\mathcal{E}}_t = - 2\bar{\mathcal{D}}_t, ~~\mbox{and}~~ \partial_t \bar{\mathcal{N}}_t = \int_t^\infty K_{s-t} \partial_s \mathcal{N}_s \, ds.
\end{equation*}
In particular the maps $\mathcal{E}$ and $\bar{\mathcal{E}}$ are decreasing, since $\mathcal{E}_0 = 1$, we have $\mathcal{E}_t \leq 1$ for any time $t \geq 0$.

\subsubsection{A differential inequality for the weighted $L^2$-norm of the heat kernel}

Following the proof of~\cite{MO16}, we will need to prove the following lemma which estimates the value of $\bar{\mathcal{N}}_0$ and the derivative $\partial_t \bar{\mathcal{N}}_t$. It closely follows~\cite[Proposition 3.3]{MO16} (written with the function $\mathcal{N}$ instead of $\bar{\mathcal{N}}$), which is itself based on~\cite[(81)]{GNO}.  We recall the notation for the maximal quantities $\mathcal{M}_0(t)$, $\mathcal{M}_1(t)$ and $\mathscr{M}_2$ introduced in Section~\ref{sec.moderatedevtandmaxquant}.

\begin{lemma} \label{lemma3.11}
There exists a constant $C := C(d) < \infty$, such that the following upper bounds hold
\begin{equation} \label{eq:20111012}
    \bar{\mathcal{N}}_0 \leq C \mathscr{M}_{2} ~~ \mbox{and}~~\partial_t \bar{\mathcal{N}}_t \leq C \mathcal{M}_1(t)^{\frac 2p} (\bar{\mathcal{N}_t})^{\frac{p-2}{p}} \mathcal{E}_t^{\frac 2p}.
\end{equation}
\end{lemma}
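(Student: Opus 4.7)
The plan is to derive a pointwise-in-time bound on $\partial_t \mathcal{N}_t$ and then combine it with the convolution against $K_{s-t}$ to obtain the two claimed inequalities. First I will differentiate $\mathcal{N}_t$ in time, using $\partial_t P_\a = \nabla \cdot \a \nabla P_\a$ and a discrete integration by parts:
\begin{equation*}
\partial_t \mathcal{N}_t = -2 \sum_{e \in E(\mathbb{T}_L)} \a(t,e)\, \nabla(|x|_*^p P_\a)(t,e)\, \nabla P_\a(t,e).
\end{equation*}
Applying the discrete product rule $\nabla(|x|_*^p P_\a)(e) = |x|_*^p(e) \nabla P_\a(e) + P_\a(e) \nabla |x|_*^p(e)$, together with the edge bound $|\nabla |x|_*^p(e)|^2 \leq C |x|_*^{p-2}(e)\, |x|_*^p(e)$, a Cauchy--Schwarz/Young argument absorbs the indefinite cross-term into the negative quadratic term and produces the clean estimate
\begin{equation*}
\partial_t \mathcal{N}_t \leq C \sum_{x \in \mathbb{T}_L} \Big(\sum_{e \ni x} \a(t,e)\Big) |x|_*^{p-2} P_\a(t,x)^2.
\end{equation*}

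For the first inequality $\bar{\mathcal{N}}_0 \leq C \mathscr{M}_2$, I will use the definition of $\mathcal{M}_2(t)$ to replace $\sum_{e \ni x} \a(t,e)$ by $\mathcal{M}_2(t) |x|_*^{(p-2)/(p-1)}$, and then apply Hölder with conjugate exponents $(p-1)/(p-2)$ and $p-1$, together with the bound $\mathcal{E}_t \leq \mathcal{E}_0 \leq 1$, to obtain $\partial_t \mathcal{N}_t \leq C\, \mathcal{M}_2(t)\, \mathcal{N}_t^{(p-2)/(p-1)}$. This is a first-order ODE for $\mathcal{N}_t^{1/(p-1)}$ which integrates to $\mathcal{N}_t \leq C\bigl(1 + \int_0^t \mathcal{M}_2(r)\, dr\bigr)^{p-1}$ (the contribution of $\mathcal{N}_0$ is $O(1)$ since $P_\a(0,\cdot)$ is concentrated at the origin). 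Inserting this into $\bar{\mathcal{N}}_0 = \int_0^\infty K_s \mathcal{N}_s\, ds$ and using the fast polynomial decay of $K$ from \eqref{def.kt} together with the time-averaged maximal structure of $\mathscr{M}_2$ delivers the first bound.

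For the differential inequality, I will interpolate $|x|_*^{p-2} P_\a^2 = \bigl(|x|_*^p P_\a^2\bigr)^{(p-2)/p} \cdot P_\a^{4/p}$, decompose the torus into dyadic annuli $A_r := \Lambda_{2r} \setminus \Lambda_r$, and on each annulus apply Hölder with the local Gagliardo--Nirenberg--Sobolev inequality from Section~\ref{SectionDGNS} (with exponents $\lambda_d'$, $\kappa_d$ and interpolation power $\theta_d$) to $P_\a$; the gradient norm that arises is then converted into a moderated Dirichlet norm via $\|\nabla P_\a\|_{\underline{L}^{\lambda_d'}} \leq \|w^{-1}\|_{\underline{L}^{\tau_d'}}\|w \nabla P_\a\|_{\underline{L}^2}$, invoking the relation $1/\tau_d' + 1/2 = 1/\lambda_d'$. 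Summing over annuli and matching with the definition of $\mathcal{M}_0(s)$ produces the pointwise bound
\begin{equation*}
\partial_s \mathcal{N}_s \leq C\, \mathcal{M}_0(s)^{1/(1-\theta_d)}\, \mathcal{N}_s^{(p-2)/p}\, \mathcal{E}_s^{2/p}.
\end{equation*}
Integrating this against $K_{s-t}$, using the monotonicity $\mathcal{E}_s \leq \mathcal{E}_t$ to factor out $\mathcal{E}_t^{2/p}$, and applying Hölder's inequality in the $s$-variable with conjugate exponents $p/2$ and $p/(p-2)$ converts the right-hand side into
\begin{equation*}
\Bigl(\int_t^\infty K_{s-t} \mathcal{M}_0(s)^{p/(2(1-\theta_d))} ds\Bigr)^{2/p} \Bigl(\int_t^\infty K_{s-t} \mathcal{N}_s\, ds\Bigr)^{(p-2)/p} \mathcal{E}_t^{2/p},
\end{equation*}
which is exactly $C\, \mathcal{M}_1(t)^{2/p}\, \bar{\mathcal{N}}_t^{(p-2)/p}\, \mathcal{E}_t^{2/p}$.

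The main obstacle is the pointwise estimate in the third paragraph: one must select the Hölder exponents and the local Gagliardo--Nirenberg--Sobolev inequality on dyadic annuli so that the cascade of inequalities produces exactly the combination $\|\a^{1/2}\|_{\underline{L}^{\sigma_d}}^{2/(1-\theta_d)} \|w^{-1}\|_{\underline{L}^{\tau_d'}}^{2/(1-\theta_d)}$ matching $\mathcal{M}_0^{1/(1-\theta_d)}$. The specific choices of $\sigma_d, \tau_d', \theta_d$ and $\lambda_d'$ made in Section~\ref{SectionDGNS} are designed precisely to produce this arithmetic, but tracking the exponents through the dyadic decomposition and verifying that the summation over scales converges is the delicate bookkeeping step.
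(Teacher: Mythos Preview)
Your treatment of the first inequality $\bar{\mathcal{N}}_0 \leq C\mathscr{M}_2$ is essentially the paper's argument and is fine.

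The gap is in the second inequality. You claim the \emph{pointwise} bound
\[
\partial_s \mathcal{N}_s \leq C\,\mathcal{M}_0(s)^{1/(1-\theta_d)}\,\mathcal{N}_s^{(p-2)/p}\,\mathcal{E}_s^{2/p},
\]
but your sketch does not produce it. After the dyadic Gagliardo--Nirenberg--Sobolev step you are left with a term of the form $\|w(s,\cdot)\nabla P_\a(s,\cdot)\|_{L^2(A_n)}^2$, and this quantity is \emph{not} controlled by $\mathcal{N}_s$ or $\mathcal{E}_s$. There is no pointwise-in-$s$ way to dispose of it: the moderated weight $w(s,e)$ carries information about the environment at times $s' \geq s$, not about $P_\a(s,\cdot)$ itself.

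The paper's argument differs from yours in two essential ways. First, it does \emph{not} discard the negative term $-\int_t^\infty K_{s-t}\sum_e \rho^p \a(\nabla P_\a)^2\,ds$ after the initial Young inequality; that term is kept precisely to absorb the gradient contribution later. Second, the gradient term coming from GNS is handled via the moderation estimate of Proposition~\ref{prop4.3},
\[
w(s,e)^2(\nabla P_\a(s,e))^2 \leq C\sum_{e'\cap e\neq\emptyset}\int_s^\infty K_{s'-s}\,\a(s',e')(\nabla P_\a(s',e'))^2\,ds',
\]
which converts $\|w\nabla P_\a\|_{L^2}^2$ into a time integral of $\a(\nabla P_\a)^2$. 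After multiplying by $K_{s-t}$ and integrating in $s$, the convolution property $K\ast K\leq K$ from~\eqref{K*KsmallerthanK} collapses the double integral, and the resulting term is absorbed into the retained negative term by choosing $\epsilon \sim \mathcal{M}_0(s)^{-1/2}$. Only the remaining lower-order term $\int K_{s-t}\mathcal{M}_0(s)^{1/(1-\theta_d)}\sum_x\rho^{p-2}P_\a^2\,ds$ survives, and it is that term (not a pointwise $\partial_s\mathcal{N}_s$) to which the H\"older-in-$s$ argument with exponents $p/2$ and $p/(p-2)$ is applied. In short: Proposition~\ref{prop4.3} and the inequality~\eqref{K*KsmallerthanK} are not optional conveniences here---they are the mechanism by which the gradient term is eliminated, and your outline omits them.
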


\begin{proof}
We first prove that the term $\mathcal{N}_t$ grows at most polynomially fast in the time $t$. Specifically, we will prove the upper bound, for any $t \geq 0$,
\begin{equation} \label{eq:15211612}
    \mathcal{N}_t \leq C \mathscr{M}_{2} ( 1 + t)^{p-1}.
\end{equation}
Using the definition of $K_t$  in~\eqref{def.kt} (which implies that it decays asymptotically like $t \mapsto t^{-p-1}$), and integrating the previous inequality, we deduce that
\begin{equation*}
    \bar{\mathcal{N}}_0 = \int_0^\infty K_t \mathcal{N}_t \, dt \leq  C \mathscr{M}_{2} \int_0^\infty (1 + t )^{-p-1} (1+t)^{p-1} \, dt  \leq C \mathscr{M}_{2}.
\end{equation*}
To prove~\eqref{eq:15211612}, we write, for $x \in \mathbb{T}_L$, $\rho(x) = |x|_*$. We first differentiate the function $\mathcal{N}_t$ and obtain
\begin{equation*}
    \frac{1}{2} \partial_t \mathcal{N}_t
     = - \sum_{e \in E \left( \mathbb{T}_L \right)} \nabla \left( \rho^p P_\a \right)(t ,e) \a(t , e) \nabla P_\a(t , e).
\end{equation*}
Expanding the discrete gradient, we see that
\begin{equation*}
    \nabla (\rho^p P_\a)(t , e) = \left(\nabla \rho^p(e) \right)  P_\a(t , e) + \rho^p(e) \nabla P_\a (t , e).
\end{equation*}
using that there exists a constant $C_0 := C_0(d) < \infty$ (as the exponent $p$ depends only on $d$) such that $\left| \nabla \rho^p(e) \right| \leq C_0 \rho^{p-1}(e)$, we deduce that
\begin{equation*}
    \frac{1}{2} \partial_t \mathcal{N}_t \leq - \sum_{e \in E \left( \mathbb{T}_L \right)} \rho^p(e) \a(t , e) (\nabla P_\a(t,e))^2 + C_0 \rho^{p-1}(e) P_\a(t,e) \a(t , e) \left| \nabla P_\a(t,e)\right| .
\end{equation*}
The second term in the right-hand side can be estimated using Young's inequality
\begin{multline*}
    \sum_{e \in E \left( \mathbb{T}_L \right)} \rho^{p-1}(e) P_\a(t,e) \a(t , e) \left|\nabla P_\a(t,e) \right| \\ \leq \frac1{2C_0} \sum_{e \in E \left( \mathbb{T}_L \right)} \rho^{p}(e) \a(t , e) (\nabla P_\a(t,e))^2  + \frac{C_0}2 \sum_{e \in E \left( \mathbb{T}_L \right)} \rho^{p-2}(e) \a(t , e) P_\a(t,e)^2 .
\end{multline*}
By the H\"{o}lder inequality and using that $\mathcal{E}_t \leq 1$, we have that
\begin{align*}
    \sum_{e \in E \left( \mathbb{T}_L \right)} \rho^{p-2}(e) \a(t , e) P_\a(t,e)^2 & \leq \left( \sum_{e \in E \left( \mathbb{T}_L \right)} \rho^{p-1}(e) \a(t , e)^{\frac{p-1}{p-2}} P_\a(t,e)^2 \right)^{\frac{p-2}{p-1}} \left( \sum_{e \in E \left( \mathbb{T}_L \right)} P_\a(t,e)^2 \right)^{\frac{1}{p-1}} \\
    & \leq C \left( \sum_{e \in E \left( \mathbb{T}_L \right)} \rho^{p-1}(e) \a(t , e)^{\frac{p-1}{p-2}} P_\a(t,e)^2 \right)^{\frac{p-2}{p-1}}.
\end{align*}
Using the definition of the random variable $\mathcal{M}_2$, we obtain
\begin{align*}
    \sum_{e \in E \left( \mathbb{T}_L \right)} \rho^{p-2}(e) \a(t , e) P_\a(t,e)^2  & \leq \left( \sum_{e \in E \left( \mathbb{T}_L \right)} \rho^{p-1}(e) \a(t , e)^{\frac{p-1}{p-2}} P_\a(t,e)^2 \right)^{\frac{p-2}{p-1}} \\
    & \leq  C \mathcal{M}_2(t) \left( \sum_{e \in E \left( \mathbb{T}_L \right)} \rho^{p}(e) P_\a(t,e)^2 \right)^{\frac{p-2}{p-1}} \\
    & \leq C \mathcal{M}_2(t)  \mathcal{N}_t^{\frac{p-2}{p-1}} .
\end{align*}
Combining the few previous displays, we obtain
\begin{equation*}
    \partial_t \mathcal{N}_t \leq C \mathcal{M}_2(t)  \mathcal{N}_t^{\frac{p-2}{p-1}}.
\end{equation*}
Integrating the inequality and using $\mathcal{N}_0 =1$, we obtain
\begin{equation*}
\mathcal{N}_t \leq C \mathscr{M}_{2} (1+t)^{p-1}.
\end{equation*}
The proof of the first part of~\eqref{eq:20111012} is complete. We next prove the inequality on the derivative of the function $\bar{\mathcal{N}}_t$. To this end, we first compute (similarly as before)
\begin{align*}
    \frac{1}{2} \partial_t \bar{\mathcal{N}}_t & = \frac12 \int_{t}^\infty K_{s-t} \partial_s \mathcal{N}_s \, ds\\
    & = \frac12 \int_{t}^\infty K_{s-t} \partial_s \mathcal{N}_s \, ds \\
    & = - \int_{t}^\infty K_{s-t} \sum_{e \in E \left( \mathbb{T}_L \right)} \nabla \left( \rho^p P_\a \right)(s ,e) \a(s , e) \nabla P_\a(s , e) \, ds.
\end{align*}
Expanding the discrete divergence, we deduce that
\begin{multline} \label{eq:09531712}
    \frac{1}{2} \partial_t \bar{\mathcal{N}}_t \leq - \int_{t}^\infty K_{s-t} \sum_{e \in E \left( \mathbb{T}_L \right)}  \rho^p(e) \a(s, e) (\nabla P_\a(s , e))^2 \, ds \\ + C_0 \int_{t}^\infty K_{s-t} \sum_{e \in E \left( \mathbb{T}_L \right)} \rho^{p-1}(e) P_\a(s,e) \a(s, e) \left|\nabla P_\a(s , e) \right| \, ds . 
\end{multline}
The second term in the right-hand side can be estimated using Young's inequality
\begin{multline} \label{new.4.277}
    \sum_{e \in E \left( \mathbb{T}_L \right)} \rho^{p-1}(e) P_\a(s,e) \a(s, e) \left|\nabla P_\a(s , e) \right| \\ \leq \frac1{2C_0} \sum_{e \in E \left( \mathbb{T}_L \right)} \rho^{p}(e) \a(s , e) (\nabla P_\a(s , e))^2  + \frac{C_0}2 \sum_{e \in E \left( \mathbb{T}_L \right)} \rho^{p-2}(e) \a(s , e) P_\a (s , e)^2 .
\end{multline}
We estimate the second term in the right-hand side. We will use the same technique as in the proof of Proposition~\ref{theoremanchoredNashtorus}. We first split the sum into dyadic scales. To this end, for $n \in \N$, we denote by $A_n := \Lambda_{2^{n+1}} \setminus \Lambda_{2^{n}} $ the dyadic annulus and by $\ln_2$ the binary logarithm. We then write
\begin{equation} \label{eq:16350912}
    \sum_{e \in E \left( \mathbb{T}_L \right)} \rho^{p-2}(e) \a(s , e) (P_\a(s , e))^2  \leq C \sum_{n = 0}^{\lfloor \ln_2 L \rfloor} 2^{(p-2)n} \sum_{e \in E \left( A_n \right)} \a(s , e) P_\a (s , e)^2.
\end{equation}
For each integer $n \in \{ 0 , \ldots, \lfloor \ln_2 N \rfloor \}$, we apply the same computation as in~\eqref{eq:16100812} based on the H\"{o}lder inequality and the Gagliardo-Nirenberg-Sobolev inequality. We obtain, for any $\ep > 0$,
    \begin{align} \label{eq:16310912}
   \lefteqn{\left(\frac{1}{\left| A_n \right|}\sum_{e \in E \left( A_n \right)} \a(s , e) P_\a (s , e)^2 \right)^{\frac 12}} \qquad & \\ & \leq \| \a(s , \cdot)^{1/2} \|_{\underline{L}^{\sigma_d}\left( A_n \right)} \left\| P_\a(s , \cdot)  \right\|_{\underline{L}^{\kappa_d} \left( A_n \right)} \notag \\
    & \leq \| \a(s , \cdot)^{1/2} \|_{\underline{L}^{\sigma_d}\left( A_n \right)} \left( \ep 2^n \left\| \nabla P_\a(s , \cdot)  \right\|_{\underline{L}^{\lambda_d'} \left( A_n \right)}  + C \ep^{-\frac{\theta_d}{1-\theta_d}} \left\| P_\a(s,\cdot) \right\|_{\underline{L}^2(A_n)} \right) \notag \\
    & \leq \ep 2^n \| \a(s , \cdot)^{1/2} \|_{\underline{L}^{\sigma_d}\left( A_n \right)} \| w^{-1}(s , \cdot) \|_{\underline{L}^{\tau_d'}\left( A_n \right)} \left\|  w(s , \cdot) \nabla P_\a(s , \cdot ) \right\|_{\underline{L}^2(A_n)} \notag \\
    & \quad + C \ep^{-\frac{\theta_d}{1-\theta_d}} \| \a(s , \cdot)^{1/2}\|_{\underline{L}^{\sigma_d}\left( A_n \right)} \left\| P_\a(s,\cdot) \right\|_{\underline{L}^2(A_n)}  . \notag 
\end{align}
Using the definition of the maximal function $\mathcal{M}_0$, the inequality~\eqref{eq:16310912} can be rewritten as follows
\begin{equation*}
    \sum_{e \in E \left( A_n \right)} \a(s , e) P_\a (s , e)^2  \leq \mathcal{M}_0(s)  2^{2n} \ep^2 \left\|  w(s , \cdot) \nabla P_\a(s , \cdot ) \right\|_{L^2(A_n)}^2 + C \ep^{-\frac{2\theta_d}{1-\theta_d}} \mathcal{M}_0(s) \left\| P_\a(s,\cdot) \right\|_{L^2(A_n)}^2 .
\end{equation*}
Using Proposition~\ref{prop4.3}, we deduce that
\begin{align*}
   \sum_{e \in E \left( A_n \right)} \a(s , e) P_\a (s , e)^2 & \leq C \mathcal{M}_0(s)  2^{2n} \ep^2 \int_s^\infty K_{s' - s} \sum_{e \in E \left( A_n \right)} \a(s' , e) (\nabla P_\a(s' , e))^2  \, ds'\\
   & \quad +  C \ep^{-\frac{2\theta_d}{1-\theta_d}} \mathcal{M}_0(s) \sum_{x \in  A_n } P_\a(s , x)^2 .
\end{align*}
Using that $ c 2^{n} \leq \rho(x) \leq C 2^{n+1}$ for any $x \in A_n$ and summing over the integers $n \in \{ 0, \ldots , \lfloor \ln_2 L \rfloor \}$, we deduce that
\begin{align*}
    \sum_{e \in E \left( \mathbb{T}_L \right)} \rho^{p-2}(e) \a(s , e) P_\a(s , e)^2  & \leq  C \mathcal{M}_0(s)  \ep^2 \int_s^\infty K_{s' - s} \sum_{ e \in  E \left( \mathbb{T}_L \right)} \rho(e)^p \a(s' , e) (\nabla P_\a(s' , e))^2 \, ds \\ & \quad +  C \ep^{-\frac{2\theta_d}{1-\theta_d}} \mathcal{M}_0(s) \sum_{x \in \mathbb{T}_L}\rho^{p-2}(x) P_\a(s , x)^2 .
\end{align*}
We next choose $\ep = 1/(C_0 \sqrt{C\mathcal{M}_0(s)})$ where $C_0$ is the constant appearing in~\eqref{eq:09531712} and $C$ is the one appearing in the previous display. We obtain
\begin{multline*}
    \sum_{e \in E \left( \mathbb{T}_L \right)} \rho^{p-2}(e) \a(s , e) P_\a(s , e)^2  \\ \leq  \frac{1}{C_0^2} \int_s^\infty K_{s' - s} \sum_{ e \in  E \left( \mathbb{T}_L \right)} \rho(e)^p \a(s' , e) (\nabla P_\a(s' , e))^2 \, ds + C \mathcal{M}_0(t)^{\frac{1}{1-\theta_d}} \sum_{x \in \mathbb{T}_L}\rho^{p-2}(x) (P_\a(s , x))^2 .
\end{multline*}
Multiplying the previous inequality by the weight function $K$ and integrating over the time variable yields, for any $t \geq 0$,
\begin{multline} \label{eq:90451712}
    \int_{t}^\infty K_{s-t} \sum_{e \in E \left( \mathbb{T}_L \right)} \rho^{p-2}(e) \a(s , e) P_\a(s , e)^2 \, ds \\ \leq \frac{1}{C_0^2} \int_{t}^\infty K_{s-t} \int_s^\infty K_{s' - s} \sum_{ e \in  E \left( \mathbb{T}_L \right)} \rho(e)^p \a(s' , e) (\nabla P_\a(s' , e))^2 \, ds \, ds' \\ + C   \int_{t}^\infty K_{s-t} \mathcal{M}_0(s)^{\frac{1}{1-\theta_d}} \sum_{x \in \mathbb{T}_L}\rho^{p-2}(x) P_\a(s  , x)^2 \, ds.
\end{multline}
The first term in the right-hand side can be simplified using the inequality~\eqref{K*KsmallerthanK} as follows
\begin{multline*}
    \int_{t}^\infty K_{s-t} \int_s^\infty K_{s' - s} \sum_{ e \in  E \left( \mathbb{T}_L \right)} \rho(e)^p \a(s' , e') (\nabla P_\a(s' , e))^2 \, ds \, ds' \\ \leq  \int_{t}^\infty K_{s-t} \sum_{ e \in  E \left( \mathbb{T}_L \right)} \rho(e)^p \a(s , e) (\nabla P_\a(s , e))^2 \, ds.
\end{multline*}
The second term in the right-hand side of~\eqref{eq:90451712} can be estimated using the H\"{o}lder inequality as follows
\begin{multline*}
    \int_{t}^\infty K_{s-t} \mathcal{M}_0(s)^{\frac{1}{1-\theta_d}} \sum_{x \in \mathbb{T}_L}\rho^{p-2}(x) P_\a(s  , x)^2 \, ds \\ \leq \left( \int_{t}^\infty K_{s-t} \sum_{x \in \mathbb{T}_L}\rho^{p}(x) P_\a(s  , x)^2 \, ds \right)^{\frac{p-2}{p}} \left( \int_{t}^\infty K_{s-t} \mathcal{M}_0(s)^{\frac{p}{2(1 - \theta_d)}} \sum_{x \in \mathbb{T}_L} P_\a(s  , x)^2 \, ds \right)^{\frac{2}{p}}.
\end{multline*}
Using that the energy $\mathcal{E}_t = \sum_{x \in \mathbb{T}_L} P_\a(t  , x)^2$ is decreasing together with the definition of the random variable $\mathcal{M}_1(t)$, we deduce that
\begin{equation*}
    \int_{t}^\infty K_{s-t} \mathcal{M}_0(s)^{\frac{p}{2(1 - \theta_d)}} \sum_{x \in \mathbb{T}_L} P_\a(s  , x)^2 \, ds  \leq \left(\int_{t}^\infty K_{s-t} \mathcal{M}_0(s)^{\frac{p }{2(1 - \theta_d)}} \, ds\right) \mathcal{E}_t \leq \mathcal{M}_1(t)\mathcal{E}_t.
\end{equation*}
A combination of the few previous displays shows that
\begin{multline*}
    \int_{t}^\infty K_{s-t} \sum_{e \in E \left( \mathbb{T}_L \right)} \rho^{p-2}(e) \a(s , e) P_\a(s, e)^2 \, ds \\ \leq \frac{1}{C_0^2} \int_{t}^\infty K_{s-t} \sum_{ e \in  E \left( \mathbb{T}_L \right)} \rho(e)^p \a(s , e) (\nabla P_\a(s , e))^2 \, ds + C\mathcal{M}_1(t)^{\frac 2p}(\bar{\mathcal{N}}_t)^{\frac{p-2}{p}} \mathcal{E}_t^{\frac 2p}.
\end{multline*}
Combining the previous inequality with~\eqref{eq:09531712} and~\eqref{new.4.277} completes the proof of Lemma~\ref{lemma3.11}.
\end{proof}

\subsubsection{An upper bound on the $L^2$-norm of the heat kernel}

We next deduce from Lemma~\ref{lemma3.11} the energy upper bound for the $L^2(\mathbb{T}_L)$-norm of the heat kernel.

\begin{proposition}\label{thm3.1222}
There exists a constant $C := C(d) < \infty$ such that, for any $t \geq 0$,
\begin{equation*}
    \mathcal{E}_t \leq \frac{C \mathscr{M}}{(1+t)^{\frac d2}}.
\end{equation*}
\end{proposition}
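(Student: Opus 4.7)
The plan is to adapt the approach of Mourrat and Otto (proof of \cite[Theorem~3.2]{MO16}) to the torus, deriving a coupled differential inequality for the moderated quantities $\bar{\mathcal{E}}_t$ and $\bar{\mathcal{N}}_t$ and exploiting the algebraic identity~\eqref{eq:3.7} to extract the sharp exponent $d/2$. First, I would apply the anchored Nash inequality (Proposition~\ref{theoremanchoredNashtorus}) to the mean-zero function $P_\a(t,\cdot)$, using $\|P_\a(t,\cdot)\|_{L^1(\mathbb{T}_L)}\leq 2$ from~\eqref{L1normheatkernelbounded} to absorb the $L^1$-term into a constant. To convert $\|w(t,\cdot)\nabla P_\a(t,\cdot)\|_{L^2(\mathbb{T}_L)}^2$ into the moderated dissipation $\bar{\mathcal{D}}_t$, I would invoke Proposition~\ref{prop4.3} and sum over edges (each edge having only boundedly many neighbors). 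This yields the pointwise-in-time bound
\begin{equation*}
\mathcal{E}_t \;\leq\; C \, \mathcal{M}_{p'}(t)^{\alpha}\, \bar{\mathcal{D}}_t^{\alpha}\, \mathcal{N}_t^{\gamma}.
\end{equation*}

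To pass to moderated quantities on every factor, I would then multiply both sides by $K_{s-t}$, integrate over $s\in[t,\infty)$, and apply H\"older's inequality with exponents $(1/\alpha,\,1/\gamma,\,1/\beta)$ (admissible since $\alpha+\beta+\gamma=1$). The submultiplicativity~\eqref{K*KsmallerthanK} gives $\int_t^\infty K_{s-t}\bar{\mathcal{D}}_s\,ds\leq\bar{\mathcal{D}}_t$, while the definition of $\mathcal{M}_3(t)$ controls $\int_t^\infty K_{s-t}\mathcal{M}_{p'}(s)^{\alpha/\beta}\,ds\leq\mathcal{M}_3(t)^{1/\beta}$. Combined with $-\partial_t\bar{\mathcal{E}}_t=2\bar{\mathcal{D}}_t$, this produces the moderated Nash differential inequality
\begin{equation*}
-\partial_t\bar{\mathcal{E}}_t \;\geq\; \frac{c\,\bar{\mathcal{E}}_t^{1/\alpha}}{\mathcal{M}_3(t)^{1/\alpha}\,\bar{\mathcal{N}}_t^{\gamma/\alpha}}.
\end{equation*}

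The companion inequality for $\bar{\mathcal{N}}_t$ is provided by Lemma~\ref{lemma3.11}: starting from the crude input $\mathcal{E}_t\leq1$, integration of $\partial_t(\bar{\mathcal{N}}_t^{2/p})\leq C\mathcal{M}_1(t)^{2/p}\mathcal{E}_t^{2/p}$ and the bound $\bar{\mathcal{N}}_0\leq C\mathscr{M}_2$ give $\bar{\mathcal{N}}_t\leq C(\mathscr{M}_1+\mathscr{M}_2)(1+t)^{p/2}$. Substituting into the ODE and integrating from $1$ to $t$, one faces the time integral $\int_1^t(1+s)^{-a}\mathcal{M}_3(s)^{-1/\alpha}\,ds$ with $a:=p\gamma/(2\alpha)$; a Cauchy-Schwarz argument combined with the lower bound $\int_0^t\mathcal{M}_3(s)^{-1/\alpha}\,ds\geq t\mathscr{M}_3^{-\gamma/\alpha}$ (from the definition of $\mathscr{M}_3$) yields a first decay estimate $\bar{\mathcal{E}}_t\leq C/t^{\lambda_1}$ with $\lambda_1<d/2$. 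Iterating---at each step re-injecting the improved bound on $\mathcal{E}_t\leq C\bar{\mathcal{E}}_{t-1}$ (valid by monotonicity of $\mathcal{E}$ and $\int_0^1 K_s\,ds\geq c_0>0$) into Lemma~\ref{lemma3.11} to sharpen $\bar{\mathcal{N}}_t$, and re-integrating the ODE---drives the exponent geometrically to the fixed-point pair $\bar{\mathcal{E}}_t\lesssim t^{-d/2}$, $\bar{\mathcal{N}}_t\lesssim t^{(p-d)/2}$. The identity~\eqref{eq:3.7}, $\alpha-(p-d)\gamma/2=d(1-\alpha)/2$, is precisely what makes this fixed point self-consistent, and the accumulated constants combine into $\mathscr{M}=((\mathscr{M}_1+\mathscr{M}_2)\mathscr{M}_3)^{\gamma/(1-\alpha-\gamma)}$.

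Finally, transferring back from $\bar{\mathcal{E}}$ to $\mathcal{E}$ via the monotonicity bound $\mathcal{E}_t\leq C\bar{\mathcal{E}}_{t-1}$ (and using $(1+t)^{d/2}\asymp(t-1)^{d/2}$ for $t\geq 2$) gives the claim for $t\geq 2$, while for $t\in[0,2]$ the estimate $\mathcal{E}_t\leq 1\leq C/(1+t)^{d/2}$ is immediate. I expect the main obstacle to be the closure of the coupled $(\bar{\mathcal{E}},\bar{\mathcal{N}})$ system: a single application of Nash does not directly produce the sharp $t^{-d/2}$ exponent, and carrying out the iteration (or verifying the fixed-point ansatz) requires careful bookkeeping of exponents and a delicate estimation of the weighted time integrals of $\mathcal{M}_3(s)^{-1/\alpha}$, which rests crucially on~\eqref{eq:3.7}.
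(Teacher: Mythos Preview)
Your proposal is correct and follows essentially the same route as the paper: anchored Nash on the torus, moderation via Proposition~\ref{prop4.3} and~\eqref{K*KsmallerthanK}, H\"older with the triple $(1/\alpha,1/\gamma,1/\beta)$ to produce $\mathcal{M}_3$, and the companion growth bound on $\bar{\mathcal{N}}_t$ from Lemma~\ref{lemma3.11}, all combined into the differential inequality~\eqref{eq:15561912}.

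The one genuine difference is in the closure. You propose an explicit bootstrap: start from $\mathcal{E}_t\leq 1$, get a suboptimal decay $\bar{\mathcal{E}}_t\lesssim t^{-\lambda_1}$, re-inject into Lemma~\ref{lemma3.11}, and iterate toward the fixed point $d/2$ (the map on exponents is $a\mapsto d\beta/(2(1-\alpha))+a\gamma/(1-\alpha)$, a contraction since $\gamma<1-\alpha$). The paper instead packages this into a single pass by introducing $\Lambda_t:=\sup_{s\leq t}(1+s)^{d/2}\mathcal{E}_s$: plugging $\mathcal{E}_s\leq \Lambda_t(1+s)^{-d/2}$ directly into Lemma~\ref{lemma3.11} gives $\bar{\mathcal{N}}_t\leq C(\mathscr{M}_1+\mathscr{M}_2)\Lambda_t(1+t)^{(p-d)/2}$ with the sharp exponent already in place, and one integration of~\eqref{eq:15561912} then yields $\Lambda_t\leq C\mathscr{M}^{1-\gamma/(1-\alpha)}\Lambda_t^{\gamma/(1-\alpha)}$, which closes immediately. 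This is the same fixed-point idea as your iteration, just written as an a priori self-improvement rather than a limiting procedure; it avoids the bookkeeping of the successive exponents and, in particular, sidesteps the weighted time-integral issue you flag (your ``Cauchy--Schwarz argument'' for $\int_1^t (1+s)^{-b}\mathcal{M}_3(s)^{-1/\alpha}\,ds$ would in fact require integration by parts against $F(t)=\int_0^t\mathcal{M}_3^{-1/\alpha}$ together with the lower bound $F(t)\geq t\mathscr{M}_3^{-\gamma/\alpha}$, but this is a minor technicality).
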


\begin{proof}
By Lemma~\ref{lemma3.11}, we have the inequality
\begin{equation} \label{eq:16151912}
    \partial_t \bar{\mathcal{N}}_t \leq C \mathcal{M}_1(t)^{\frac 2p} (\bar{\mathcal{N}}_t)^{\frac{p-2}{p}} \mathcal{E}_t^{\frac{2}{p}}.
\end{equation}
We define the quantity
\begin{equation*} 
\Lambda_t := \sup_{s \leq t} (1+s)^{\frac{d}{2}} \mathcal{E}_s,
\end{equation*}
and note that, for any $t \geq 0$, $\Lambda_t \geq 1$.
We first observe that~\eqref{eq:16151912} can be rewritten using the definition $\Lambda_t$ as follows: for every $t \geq 0$,
\begin{equation} \label{ineq:barNtdiff}
    \partial_t \bar{\mathcal{N}}_t^{ \frac{2}{p}} \leq C \mathcal{M}_1(t)^{\frac 2p} \Lambda_t^{\frac 2p} (1+t)^{-\frac{d}{p}}.
\end{equation}
Integrating the previous inequality, recalling the definition of the maximal quantity $\mathscr{M}_1$ and using that $\Lambda_t$ is increasing in $t$, we obtain, for any time $t \geq 2$,
\begin{align} \label{computationbarN}
    \bar{\mathcal{N}}_t^{\frac 2p} - \bar{\mathcal{N}}_{t/2}^{\frac 2p} & = \int_{t/2}^t \partial_s \bar{\mathcal{N}}_s \, ds \\
    & \leq C \Lambda_t^{\frac 2p} \left( 1 + \frac{t}{2} \right)^{- \frac dp} \int_{t/2}^t \mathcal{M}_1(s)^{\frac 2p} \, ds \notag \\
    & \leq C \Lambda_t^{\frac 2p} \left( 1 + t \right)^{- \frac dp} \int_{0}^t \mathcal{M}_1(s)^{\frac 2p} \, ds \notag \\
    & \leq C \mathscr{M}_1^{\frac 2p} \Lambda_t^{\frac 2p} (1+t)^{1-\frac{d}{p}}. \notag
\end{align}
Iterating the previous inequality (using that the map $t \mapsto \Lambda_t$ is increasing), treating the small values of $t$ (between $0$ and $1$) using the inequality~\eqref{ineq:barNtdiff}, and using the bound on $\bar{\mathcal{N}}_0$ provided by Lemma~\ref{lemma3.11}, we obtain that, for any $t \geq 0$,
\begin{align} \label{eq:barmathcalN}
    \bar{\mathcal{N}}_t & \leq C \mathscr{M}_1 \Lambda_t (1+t)^{\frac{p- d}{2}} + C \mathscr{M}_2 \\
    & \leq C \left( \mathscr{M}_1 + \mathscr{M}_2 \right) \Lambda_t ( 1 + t)^{\frac{p- d}{2}} . \notag
\end{align}
Applying the anchored Nash estimate, and using that the $L^1$-norm of the heat kernel $P_\a$ is bounded (see~\eqref{L1normheatkernelbounded}), we obtain,  for any $t \geq 0$,
\begin{equation*}
    \mathcal{E}_t \leq C \left( \mathcal{M}_{p'}(t) \left\| w(t , \cdot) \nabla P_\a(t , \cdot) \right\|^2_{L^2 \left( \mathbb{T}_L\right)} \right)^\alpha \mathcal{N}_t^\gamma.
\end{equation*}
Multiplying the previous inequality by the weight function $K$ and integrating over time, we deduce that,  for any $t \geq 0$,
\begin{equation} \label{eq:barEnashforit}
    \bar{\mathcal{E}}_t \leq \int_{t}^\infty K_{s-t} \left( \mathcal{M}_{p'}(s) \left\| w(s , \cdot) \nabla P_\a(s , \cdot)\right\|^2_{L^2 \left( \mathbb{T}_L\right)} \right)^\alpha \mathcal{N}_s^\gamma \, ds.
\end{equation}
Applying the H\"{o}lder inequality (recalling that $\alpha + \beta + \gamma = 1$), we deduce that,  for any $t \geq 0$,
\begin{multline} \label{eq:14421712}
\int_{t}^\infty K_{s-t} \left( \mathcal{M}_{p'}(t) \left\| w(s , \cdot) \nabla P_\a(s , \cdot) \right\|^2_{L^2 \left( \mathbb{T}_L\right)} \right)^\alpha \mathcal{N}_s^\gamma  \, ds\\
\leq \left( \int_{t}^\infty K_{s-t} \mathcal{M}_{p'}(s)^\frac{\alpha}{\beta} \, ds \right)^\beta \left( \int_{t}^\infty K_{s-t} \left\|  w(s , \cdot) \nabla P_\a(s , \cdot) \right\|^2_{L^2 \left( \mathbb{T}_L\right)} \, ds \right)^\alpha \left( \int_{t}^\infty K_{s-t} \mathcal{N}_s \, ds \right)^\gamma.
\end{multline}
The first term in the right-hand side is by definition smaller than $\mathcal{M}_3(t)$. We then estimate the second term in the right-hand side. To this end, we use Proposition~\ref{prop4.3} together with the bound~\eqref{K*KsmallerthanK}. We obtain, for any $t \geq 0$,
\begin{align*}
    \int_{t}^\infty K_{s-t} \left\|  w(s , \cdot) \nabla P_\a(s , \cdot) \right\|^2_{L^2 \left( \mathbb{T}_L\right)} \, ds & \leq C \int_{t}^\infty K_{s-t} \int_s^{\infty} K_{s' - s} \|  \a(s' , \cdot)^{1/2} \nabla P_\a(s' , \cdot) \|^2_{L^2 \left( \mathbb{T}_L\right)} \, ds' ds \\
    & \leq C \int_{t}^\infty K_{s-t} \|  \a(s , \cdot)^{1/2} \nabla P_\a(s , \cdot) \|^2_{L^2 \left( \mathbb{T}_L\right)} \, ds \\
    & \leq C \bar{\mathcal{D}}_t .
\end{align*}
Using the identity $\partial_t \bar{\mathcal{E}}_t = - 2\bar{\mathcal{D}}_t$, we may rewrite the inequality~\eqref{eq:14421712} as follows
\begin{equation*}
    \int_{t}^\infty K_{s-t} \left( \mathcal{M}_{p'}(t) \left\| w(s , \cdot) \nabla P_\a(s , \cdot) \right\|^2_{L^2 \left( \mathbb{T}_L\right)} \right)^\alpha \mathcal{N}_s^\gamma  \, ds \leq C \mathcal{M}_3(t) (- \partial_t \bar{\mathcal{E}}_t)^\alpha  \bar{\mathcal{N}}_t^\gamma.
\end{equation*}
Combining the previous inequality with~\eqref{eq:barEnashforit} and using the inequality~\eqref{eq:barmathcalN}, we deduce that
\begin{equation*}
    \bar{\mathcal{E}}_t \leq C \mathcal{M}_3(t) (- \partial_t \bar{\mathcal{E}}_t)^\alpha  \bar{\mathcal{N}}_t^\gamma \leq 
    C \mathcal{M}_3(t) (\mathscr{M}_1 + \mathscr{M}_2)^\gamma (- \partial_t \bar{\mathcal{E}}_t)^\alpha \Lambda_t^\gamma (1+t)^{\frac{(p-d)\gamma}{2}},
\end{equation*}
which can be rewritten as
\begin{equation} \label{eq:15561912}
    \left( - \partial_t \bar{\mathcal{E}}_t  \right) \bar{\mathcal{E}}_t^{-\frac{1}{\alpha}} \geq c\mathcal{M}_3(t)^{-\frac 1\alpha} (\mathscr{M}_1 + \mathscr{M}_2)^{-\frac{\gamma}{\alpha}} \Lambda_t^{-\frac{\gamma}{\alpha}} (1+t)^{-\frac{(p-d)\gamma}{2 \alpha}}.
\end{equation}
Using that $t \mapsto \Lambda_t$ is increasing and that $p > d$, we deduce that
\begin{equation*}
   \bar{\mathcal{E}}_t^{1 - \frac 1\alpha} \geq c \Lambda_t^{-\frac{\gamma}{\alpha}} (1+t)^{-\frac{(p-d)\gamma}{2 \alpha}}  (\mathscr{M}_1 + \mathscr{M}_2)^{-\frac{\gamma}{\alpha}}  \int_0^t \mathcal{M}_3^{-\frac{1}{\alpha}}(s) \, ds.
\end{equation*}
Using the identity~\eqref{eq:3.7} and the definitions of the random variables $\mathscr{M}_3$ and $\mathscr{M}$, we deduce that, for any $t \geq 1$,
\begin{equation*}
    (1+t)^{\frac d2} \bar{\mathcal{E}}_t \leq C \Lambda_t^{\frac{\gamma}{1-\alpha}} \mathscr{M}^{1 - \frac{\gamma}{1-\alpha}}.
\end{equation*}
Using that $\mathcal{E}_t$ is decreasing in $t$ and the definition of $\bar{\mathcal{E}}_t$, we have the inequality 
\begin{equation*}
\left( \int_0^1 K_s \, ds \right)  \mathcal{E}_{t+1} \leq \bar{\mathcal{E}}_t .
\end{equation*}
Using that $t \mapsto \Lambda_t$ is increasing, we obtain, for any $t \geq 1$,
\begin{equation*}
    (1+t)^{\frac{d}2} \mathcal{E}_{t+1} \leq C \mathscr{M}^{1- \frac{\gamma}{1-\alpha}}\Lambda_{t+1}^{\frac{\gamma}{1-\alpha}}.
\end{equation*}
Combining the previous inequality with the bound $\mathcal{E}_t \leq 1$, the observation that $\Lambda_{t}$ is increasing and larger than $1$, we deduce that, for any $t \geq 0$,
\begin{equation*}
    \Lambda_t \leq C  \mathscr{M}^{1- \frac{\gamma}{1-\alpha}} \Lambda_{t}^{\frac{\gamma}{1-\alpha}} \implies \Lambda_t \leq C \mathscr{M}.
\end{equation*}
The proof of Proposition~\ref{thm3.1222} is complete.
\end{proof}

\subsubsection{A refined upper bound on $L^2$-norm on the heat kernel}
This section is devoted to the proof of Theorem~\ref{thm3.13} building upon Proposition~\ref{thm3.1222}.

\begin{proof}[Proof of Theorem~\ref{thm3.13}]
We let $C_2 \geq 1$ be a large constant whose value will be selected later in the argument and shall depend only on $d$, and define
\begin{equation} \label{def.C1andC0}
    C_1 := C_2^{\frac{2 \alpha}{ d \beta + p \gamma}}  \mathscr{M}_3^{\frac{2\gamma}{(d \beta + p \gamma)}} ~~ \mbox{and} ~~ C_0 :=   2 \mathscr{M}_3^{\frac{\gamma}{\alpha}} C_1.
\end{equation}
We then define the three quantities
\begin{equation*}
\mathcal{H}_t := e^{\frac{1}{C_0 L^2} \int_0^t \mathcal{M}_4(s)^{-1} \, ds} \mathcal{E}_t,~~\bar{\mathcal{H}}_t := e^{\frac{1}{C_0 L^2} \int_0^t \mathcal{M}_4(s)^{-1} \, ds} \bar{\mathcal{E}}_t ~~ \mbox{and}~~ 
\Xi_t := \sup_{s \leq t} (1+s)^{\frac{d}{2}} \mathcal{H}_s.
\end{equation*}
We next prove the following upper bound
\begin{equation} \label{eq:13071912}
     \bar{\mathcal{E}}_t \leq C \mathcal{M}_4(t) L^2(-\partial_t \bar{\mathcal{E}}_t) .
\end{equation}
To prove the inequality~\eqref{eq:13071912}, we first use that the function $\mathcal{E}_t$ is decreasing and write
\begin{align*}
    \bar{\mathcal{E}}_t & = \int_t^\infty K_{s-t} \left\| P_\a(s , \cdot) \right\|_{L^2\left( \mathbb{T}_L \right)}^2 \, ds \\
    & = \int_t^{t+1} K_{s-t} \left\| P_\a(s , \cdot) \right\|_{L^2\left( \mathbb{T}_L \right)}^2 \, ds + \int_{t+1}^{\infty} K_{s-t} \left\| P_\a(s , \cdot) \right\|_{L^2\left( \mathbb{T}_L \right)}^2 \, ds \\
    & \leq \int_t^{t+1} K_{s-t} \left\| P_\a(s , \cdot) \right\|_{L^2\left( \mathbb{T}_L \right)}^2  ds+ \left\| P_\a(t+1 , \cdot) \right\|_{L^2\left( \mathbb{T}_L \right)}^2 \int_{1}^\infty K_s \, ds.
\end{align*}
Using a second time that $\mathcal{E}_t$ is decreasing, that the ratio $\int_{1}^\infty K_s \, ds / \int_{0}^1 K_s \, ds$ is a finite constant depending only on the parameter $d$, and the definition of the random variable $\mathcal{M}_4(t)$, we deduce that
\begin{align*}
    \bar{\mathcal{E}}_t & \leq \left( 1 + \frac{\int_{1}^\infty K_s \, ds}{\int_{0}^1 K_s \, ds} \right) \int_t^{t+1} K_{s-t} \left\| P_\a(s , \cdot) \right\|_{L^2\left( \mathbb{T}_L \right)}^2 \, ds \\
    & \leq C \mathcal{M}_4(t) \int_t^{t+1} K_{s-t} \| w^{-1} \left(s , \cdot \right)\|_{\underline{L}^d(\mathbb{T}_L)}^{-2} \left\| P_\a(s , \cdot)\right\|_{L^2\left( \mathbb{T}_L \right)}^2 \, ds \\
    & \leq C \mathcal{M}_4(t) \int_t^{\infty} K_{s-t} \| w^{-1} \left(s , \cdot \right)\|_{\underline{L}^d(\mathbb{T}_L)}^{-2} \left\| P_\a(s , \cdot)\right\|_{L^2\left( \mathbb{T}_L \right)}^2 \, ds .
\end{align*}
We next use the Gagliardo-Nirenberg-Sobolev inequality (using that $\sum_{x \in \mathbb{T}_L} P_\a(s , x) = 0$), then the H\"{o}lder inequality. We obtain
\begin{align*}
    \bar{\mathcal{E}}_t & \leq C \mathcal{M}_4(t) \int_t^{\infty} K_{s-t} \| w^{-1} \left(s , \cdot \right)\|_{\underline{L}^d(\mathbb{T}_L)}^{-2}  \left\| P_\a(s , \cdot)\right\|_{L^2\left( \mathbb{T}_L \right)}^2 \, ds \\
    & \leq C \mathcal{M}_4(t) \int_t^\infty K_{s-t} \| w^{-1} \left(s , \cdot \right)\|_{\underline{L}^d(\mathbb{T}_L)}^{-2} \left\| \nabla P_\a(s , \cdot) \right\|_{L^{\frac{2d}{d+2}}\left( \mathbb{T}_L \right)}^2 \, ds \\
    & \leq C \mathcal{M}_4(t) L^2 \int_t^\infty K_{s-t} \left\| w(s , \cdot) \nabla P_\a(s , \cdot) \right\|_{L^{2}\left( \mathbb{T}_L \right)}^2 \, ds.
\end{align*}
Using Proposition~\ref{prop4.3} together with the bound~\eqref{K*KsmallerthanK}, we deduce that
\begin{align} \label{eq:14561912}
    \bar{\mathcal{E}}_t & \leq C \mathcal{M}_4(t) L^2 \int_t^\infty K_{s-t} \| \a(s , \cdot)^{1/2} \nabla P_\a(s , \cdot) \|_{L^{2}\left( \mathbb{T}_L \right)}^2 \, ds \\
    & \leq  C \mathcal{M}_4(t) L^2 ( - \partial_t \bar{\mathcal{E}}_t). \notag
\end{align}
The proof of the inequality~\eqref{eq:13071912} is complete. We impose here a first condition on the constant $C_2$ and choose it sufficiently large so that the constant $C_2$ is larger than $2C$, where $C$ is the constant appearing in the right-hand side of~\eqref{eq:13071912}. We thus deduce that
\begin{align} \label{eq:HandEcompared}
   \left(  -  \partial_t  \bar{\mathcal{H}}_t  \right) \bar{\mathcal{H}}_t^{-\frac{1}{\alpha}} & = e^{ \left(1 - \frac{1}{\alpha} \right) \frac{1}{C_0L^2} \int_0^t \mathcal{M}_4(s)^{-1} \, ds} \left( - \frac{1}{C_0 \mathcal{M}_4(t) L^2}  \bar{\mathcal{E}}_t  - \partial_t   \bar{\mathcal{E}}_t  \right) \bar{\mathcal{E}}_t^{-\frac{1}{\alpha}} \\
   & \geq \frac12 e^{ \left(1 - \frac{1}{\alpha} \right) \frac{1}{C_0L^2} \int_0^t \mathcal{M}_4(s)^{-1} \, ds} \left( - \partial_t   \bar{\mathcal{E}}_t \right)  \bar{\mathcal{E}}_t^{-\frac{1}{\alpha}}. \notag
\end{align}
Applying the anchored Nash inequality, we have
\begin{equation*}
    \mathcal{E}_t \leq C \left\| P_\a(t , \cdot) \right\|^{2\beta}_{L^1\left( \mathbb{T}_L \right)} \left( \mathcal{M}_{p'}(t) \left\| w(t , \cdot) \nabla P_\a(t , \cdot) \right\|^2_{L^2 \left( \mathbb{T}_L\right)} \right)^\alpha \mathcal{N}_t^\gamma.
\end{equation*}
We then estimate the first and third terms in the right hand side by using the Cauchy-Schwarz inequality and the bound $\rho(x) \leq C L$, which is valid on the torus since its diameter is of order $L$. We obtain
\begin{equation*}
    \left\| P_\a(t , \cdot) \right\|_{L^1\left( \mathbb{T}_L \right)}^2 \leq C L^{d} \mathcal{E}_t ~~ \mbox{and}~~ \mathcal{N}_t \leq C L^p \mathcal{E}_t.
\end{equation*}
Combining the two previous displays, we may write 
\begin{equation*}
    \mathcal{E}_t \leq C \left( L^{d}\mathcal{E}_t \right)^{\beta} \left( \mathcal{M}_{p'}(t) \left\| w(t , \cdot) \nabla u(t , \cdot) \right\|^2_{L^2 \left( \mathbb{T}_L\right)} \right)^\alpha (L^p \mathcal{E}_t)^\gamma.
\end{equation*}
Performing the same computation as in~\eqref{eq:14421712} and using that the function $\mathcal{E}_t$ is decreasing, we deduce that 
\begin{equation*}
    \bar{\mathcal{E}}_t \leq C \mathcal{M}_3(t) L^{d\beta + p \gamma} \mathcal{E}_t ^{\beta + \gamma} (- \partial_t \bar{\mathcal{E}}_t)^\alpha.
\end{equation*}
Using the definition of $\Xi_t$, we further deduce that
\begin{align*}
    \bar{\mathcal{E}}_t  \leq C \mathcal{M}_3(t) L^{d\beta + p \gamma} (1 + t)^{- \frac{(\beta + \gamma) d}{2}} e^{-\frac{\beta + \gamma}{C_0 L^2} \int_0^t \mathcal{M}_4(s)^{-1} \, ds}  \Xi_t^{\beta + \gamma}(- \partial_t \bar{\mathcal{E}}_t)^\alpha.
\end{align*}
Rearranging the previous inequality, we obtain
\begin{equation*}
    (- \partial_t \bar{\mathcal{E}}_t)\bar{\mathcal{E}}_t^{-\frac 1\alpha} \geq c \mathcal{M}_3(t)^{-\frac{1}{\alpha}} L^{- \frac{d\beta + p \gamma}{\alpha}} (1 + t)^{ \frac{(\beta + \gamma) d}{2\alpha}} e^{\frac{\beta + \gamma}{\alpha C_0 L^2} \int_0^t \mathcal{M}_4(s)^{-1} \, ds}  \Xi_t^{-\frac{\beta + \gamma}{\alpha}}.
\end{equation*}
Combining the previous inequality with~\eqref{eq:HandEcompared} and noting that $1 - \frac{1}{\alpha} = -\frac{\beta + \gamma}{\alpha}$ (since $\alpha + \beta + \gamma =1$), we can rewrite the previous inequality as follows
\begin{align*}
     \partial_t \bar{\mathcal{H}}_t^{1-\frac{1}{\alpha}}
    & \geq c \mathcal{M}_3(t)^{-\frac{1}{\alpha}}L^{- \frac{d\beta + p \gamma}{\alpha}} (1 + t)^{ \frac{(\beta + \gamma) d}{2\alpha}} \Xi_t^{-\frac{\beta + \gamma}{\alpha}} \\
    & \geq c \mathcal{M}_3(t)^{-\frac{1}{\alpha}} \left(\frac{1 + t}{L^2} \right)^{ \frac{d\beta + p \gamma}{2\alpha}} (1 + t)^{- \frac{\gamma (p-d)}{2\alpha}} \Xi_t^{-\frac{\beta + \gamma}{\alpha}}.
\end{align*}
Using the definition of the constant $C_1$, we have, for any $t \geq C_1 L^2$,
\begin{equation*}
    \partial_t \bar{\mathcal{H}}_t^{1-\frac{1}{\alpha}} \geq c C_2 \mathscr{M}_3^{\frac{\gamma}{\alpha}} \mathcal{M}_3(t)^{-\frac{1}{\alpha}}   (1+t)^{-\frac{\gamma (p- d)}{2 \alpha}} \Xi_t^{-\frac{\beta + \gamma}{\alpha}}.
\end{equation*}
Integrating the previous inequality and using that $\Xi_t$ is increasing in $t$, we deduce that, for any $t \geq C_1 L^2$,
\begin{equation*}
    \bar{\mathcal{H}}_t^{1-\frac{1}{\alpha}} - \bar{\mathcal{H}}_{C_1 L^2}^{1-\frac{1}{\alpha}} \geq c C_2  \Xi_t^{-\frac{\beta + \gamma}{\alpha}}  (1+t)^{-\frac{\gamma (p- d)}{2 \alpha}} \mathscr{M}_3^{\frac{\gamma}{\alpha}} \int_{C_1 L^2}^{t}\mathcal{M}_3(s)^{-\frac{1}{\alpha}} \, ds.
\end{equation*}
We next recall the definition of the constant $C_0$ introduced in~\eqref{def.C1andC0}, and lower bound the term in the right-hand side for $t \geq C_0 L^2$. To this end, we use the definition of the random variable $\mathscr{M}_3$ and the lower bound $\mathcal{M}_3 \geq 1$, and obtain, for any $t \geq C_0 L^2$,
\begin{align*}
    \frac{\mathscr{M}_3^{\frac{\gamma}{\alpha}}}{t}  \int_{C_1 L^2}^{t}\mathcal{M}_3(s)^{-\frac{1}{\alpha}} \, ds
    & =  \frac{\mathscr{M}_3^{\frac{\gamma}{\alpha}}}{t}\int_{0}^{t} \mathcal{M}_3(s)^{-\frac{1}{\alpha}} \, ds - \frac{\mathscr{M}_3^{\frac{\gamma}{\alpha}}}{t} \int_{0}^{C_1 L^2} \mathcal{M}_3(s)^{-\frac{1}{\alpha}} \, ds \\
    & \geq 1 - \frac{C_1 L^2 \mathscr{M}_3^{\frac{\gamma}{\alpha}}}{t} \\
    & \geq \frac{1}{2}.
\end{align*}
A combination of the two previous displays yields, for any $t \geq C_0 L^2$,
\begin{equation*}
    \bar{\mathcal{H}}_t^{1-\frac{1}{\alpha}} \geq \bar{\mathcal{H}}_t^{1-\frac{1}{\alpha}} - \bar{\mathcal{H}}_{C_1 L^2}^{1-\frac{1}{\alpha}} \geq c C_2  \Xi_t^{-\frac{\beta + \gamma}{\alpha}}  (1+t)^{1-\frac{\gamma (p- d)}{2 \alpha}} = c C_2  \Xi_t^{ 1 - \frac{1}{\alpha}}  (1+t)^{1-\frac{\gamma (p- d)}{2 \alpha}} .
\end{equation*}
We finally remove the averaging from the previous inequality. Using that the map $\mathcal{E}_t$ is decreasing, we have the estimate
\begin{equation*}
     \left( \int_0^1 K_s \, ds \right) \mathcal{E}_{t+1} \leq \bar{\mathcal{E}}_t,
\end{equation*}
and combining the previous inequality with the bound $\mathcal{M}_4^{-1}(t) \leq 1$ (which follows from the definition of $\mathcal{M}_4$), we deduce that
\begin{equation*}
    \left( \int_0^1 K_s \, ds \right) e^{-1/(C_0L^2)} \mathcal{H}_{t+1} \leq \bar{\mathcal{H}}_t.
\end{equation*}
Combining the few previous displays and using that $t \mapsto \Xi_t$ is increasing, we deduce that, for any $t \geq C_0 L^2+1$,
\begin{equation*}
    \mathcal{H}_{t}^{1-\frac{1}{\alpha}} \geq c C_2 \Xi_{t}^{1 - \frac{1}{\alpha}}  (1+t)^{1-\frac{\gamma (p- d)}{2 \alpha}}.
\end{equation*}
We next impose a second condition on the constant $C_2$ and assume that $c C_2 \geq 2^{\frac{1}{\alpha} -1}$. This leads to the bound, for any $t \geq C_0 L^2+1$,
\begin{equation*}
   (1+t)^{\frac d2} \mathcal{H}_{t} \leq \frac12 \Xi_{t}.
\end{equation*}
We next apply Proposition~\ref{thm3.1222} and the bound $\mathcal{M}_4(t)^{-1} \leq 1$ and obtain
\begin{align*}
    \sup_{t \in [0 , C_0 L^2+1]} (1+t)^{\frac{d}{2}} \mathcal{H}_t & = \sup_{t \in [0 , C_0 L^2+1]} e^{\frac{1}{C_0 L^2} \int_0^t \mathcal{M}_4(s)^{-1} \, ds} (1+t)^{\frac{d}{2}} \mathcal{E}_t \\
    & \leq e^{\frac{C_0 L^2 + 1}{C_0 L^2} } \sup_{t \in [0 , C_0 L^2+1]}  (1+t)^{\frac{d}{2}} \mathcal{E}_t \\
    & \leq C \mathscr{M}.
\end{align*}
Combining the two previous displays, we deduce that, for any $t \geq 0$,
\begin{equation*}
    \Xi_t \leq \frac{1}{2} \Xi_t + C \mathscr{M},
\end{equation*}
and thus, for any $t \geq 0$,
\begin{equation*}
    \Xi_t \leq C \mathscr{M}.
\end{equation*}
The proof of Theorem~\ref{thm3.13} is complete.
\end{proof}

\subsection{On diagonal estimate for the heat kernel} \label{section:ondiagonalheat}

In this section, we deduce from Theorem~\ref{thm3.13} the on-diagonal upper bound on the heat-kernel $P_\a$. In order to state the result, we fix a time $t \in [0 , \infty)$ and define the reversed environment 
\begin{equation*}
\a^{(t)} (t',e) := \a(t - t', e).
\end{equation*}
The environment $\a^{(t)}$ is only defined for the times $t' \in [0 , t]$. This is the only relevant property for the statement below; but we note that we may extend its definition to all times so as to make $\a^{(t)}$ a stationary process by for instance extending the definition of the Langevin dynamic to negative times.  

Since the Langevin dynamic is stationary and reversible with respect to the Gibbs measure $\mu_{\mathbb{T}_L}$, the processes $\a$ and $\a^{(t)}$ have the same law. Let us denote by $\mathscr{M}^{(t)}$ the random variable $\mathscr{M}$ associated with the environment $\a^{(t)}$. Since the processes $\a$ and $\a^{(t)}$ have the same law, the random variables $\mathscr{M}$ and $\mathscr{M}^{(t)}$ also have the same law.

\begin{proposition}[On-diagonal heat kernel decay] \label{cor3.14}
There exists a constant $C := C(d) < \infty$ such that, for any time $t \geq 0$,
\begin{equation*}
P_\a (t , 0) \leq \frac{C \sqrt{\mathscr{M} \mathscr{M}^{(t)}}}{(1+t)^{\frac d2}} \exp \left( - \frac{t}{C \mathscr{M}'L^2} \right).
\end{equation*}
\end{proposition}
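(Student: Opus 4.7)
The plan is to combine the $L^2$ decay estimate of Theorem~\ref{thm3.13} with the semigroup property of the heat kernel and Cauchy--Schwarz, using the reversed environment $\a^{(t)}$ to handle the second factor.

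First, I would verify the following duality identity between the heat kernel with environment $\a$ and the heat kernel with the reversed environment $\a^{(t)}$: for any $0 \leq s \leq t$ and any $x, y \in \mathbb{T}_L$,
\begin{equation*}
    P_\a (t, x ; s, y) = P_{\a^{(t)}}(t - s , y ; 0 , x).
\end{equation*}
This is obtained by setting $R(s', y) := P_\a(t , x ; t - s' , y)$ and observing, from the fact that $(s , y) \mapsto P_\a (t , x ; s , y)$ solves the backward adjoint parabolic equation with terminal condition $\delta_x - 1/|\mathbb{T}_L|$ at time $s = t$ (which uses the symmetry of $\nabla \cdot \a \nabla$ in space), that $R$ solves the forward parabolic equation with environment $\a^{(t)}$ and initial condition $\delta_x - 1/|\mathbb{T}_L|$ at time $s' = 0$. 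By uniqueness, $R(s' , y) = P_{\a^{(t)}}(s', y ; 0 , x)$.

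Next, I would use the semigroup property at the midpoint $t/2$:
\begin{equation*}
    P_\a (t , 0 ; 0 , 0) = \sum_{y \in \mathbb{T}_L} P_\a (t , 0 ; t/2, y) \, P_\a (t/2 , y ; 0 , 0).
\end{equation*}
This identity is justified by writing $P_\a = Q_\a - 1/|\mathbb{T}_L|$ with $Q_\a$ the unnormalized heat kernel (which satisfies the usual semigroup identity), and noting that the two cross terms involving $\sum_y P_\a(\cdot , \cdot; \cdot , y)$ vanish since $P_\a$ has zero spatial mean (by the normalization in~\eqref{eq:11012312} together with the preservation of mass). Combining with the duality identity yields
\begin{equation*}
    P_\a (t , 0) = \sum_{y \in \mathbb{T}_L} P_{\a^{(t)}} (t/2 , y) \, P_\a (t/2 , y),
\end{equation*}
using the shorthand $P_\a (t , y) = P_\a (t , y ; 0 , 0)$.

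Finally, applying the Cauchy--Schwarz inequality gives
\begin{equation*}
    P_\a (t , 0) \leq \left( \sum_{y \in \mathbb{T}_L} P_{\a^{(t)}} (t/2 , y)^2 \right)^{1/2} \left( \sum_{y \in \mathbb{T}_L} P_\a (t/2 , y)^2 \right)^{1/2},
\end{equation*}
and invoking Theorem~\ref{thm3.13} to each of the two $L^2$ norms (once for $\a$ and once for $\a^{(t)}$) yields
\begin{equation*}
    P_\a (t , 0) \leq \frac{C \sqrt{\mathscr{M} \mathscr{M}^{(t)}}}{(1+t)^{d/2}} \exp\left( - \frac{t}{C \mathscr{M}' L^2} \right),
\end{equation*}
after absorbing the factor $(1 + t/2)^{-d/2}$ into $(1+t)^{-d/2}$ at the cost of enlarging $C$ and, similarly, combining the two exponential factors into a single exponential in $\mathscr{M}'$ (enlarging $C$ so that both ${\mathscr{M}'}$ and ${\mathscr{M}'}^{(t)}$ are controlled, which is admissible since the two have the same law by stationarity and reversibility of the Langevin dynamic).

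The main technical points are the duality identity $P_\a (t , x ; s , y) = P_{\a^{(t)}}(t-s, y ; 0 , x)$ and the semigroup property with centering; once these are established the rest is a direct application of Theorem~\ref{thm3.13}.
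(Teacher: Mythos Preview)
Your approach is essentially the same as the paper's: semigroup/convolution at time $t/2$, the duality identity $P_\a(t,0;t/2,y)=P_{\a^{(t)}}(t/2,y)$, Cauchy--Schwarz, and then the $L^2$ decay estimate applied to each factor. The paper actually applies the weaker Proposition~\ref{thm3.1222} (polynomial decay only) to the $\a^{(t)}$ factor and Theorem~\ref{thm3.13} (polynomial plus exponential) to the $\a$ factor, whereas you invoke Theorem~\ref{thm3.13} on both.

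One point to correct: your last step, where you ``combine the two exponential factors into a single exponential in $\mathscr{M}'$'' by appealing to the fact that $\mathscr{M}'$ and ${\mathscr{M}'}^{(t)}$ have the same law, is not valid reasoning for the \emph{pointwise} bound being claimed. Equality in law gives you nothing about a pointwise comparison between ${\mathscr{M}'}^{(t)}$ and $\mathscr{M}'$. The fix is trivial, though: simply discard the factor $\exp\bigl(-t/(C{\mathscr{M}'}^{(t)}L^2)\bigr)\le 1$ and keep only the one involving $\mathscr{M}'$. (This is precisely why the paper uses Proposition~\ref{thm3.1222} on the reversed factor: it avoids producing the superfluous ${\mathscr{M}'}^{(t)}$ term in the first place.)
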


\begin{proof}
Using the convolution property of the heat kernel and the Cauchy-Schwarz inequality, we obtain
\begin{equation} \label{eq:CScorr3.13}
    P_\a (t , 0) = \sum_{x \in \mathbb{T}_L} P_\a (t , 0 ; t/2 , x) P_\a \left( t/2 , x \right) \leq \left(\sum_{x \in \mathbb{T}_L} P_\a (t , 0 ; t/2 , x)^2 \right)^{\frac 12} \left(\sum_{x \in \mathbb{T}_L} P_\a (t/2 , x )^2 \right)^{\frac 12}.
\end{equation}
To estimate the first term in the right-hand side, we use the following identity between the heat kernel and the heat kernel under the reversed environment (see~\cite[Lemma 4.5]{MO16})
\begin{equation*}
    P_\a (t , 0 ; t/2 , x) = P_{\a^{(t)}} \left( t/2 , x \right).
\end{equation*}
Applying Proposition~\ref{thm3.1222} with the environment $\a^{(t)}$ (and thus the random variable $\mathscr{M}^{(t)}$), we deduce that
\begin{equation*}
    \sum_{x \in \mathbb{T}_L} P_\a (t , 0 ; t/2 , x)^2  \leq \frac{C \mathscr{M}^{(t)}}{(1+t)^{\frac d2}}.
\end{equation*}
The second term in the right-hand side of~\eqref{eq:CScorr3.13} can be estimated using Theorem~\ref{thm3.13}. We obtain
\begin{equation*}
    \sum_{x \in \mathbb{T}_L} P_\a (t/2 ,x)^2  \leq \frac{C \mathscr{M}}{(1+t)^{\frac d2}} \exp \left( - \frac{t}{C \mathscr{M}' L^2} \right).
\end{equation*}
Combining the two previous displays with~\eqref{eq:CScorr3.13} completes the proof of Proposition~\ref{cor3.14}.

\end{proof}

\subsection{Helffer-Sj\"{o}strand representation formula and proof of Theorem~\ref{main.theorem}} \label{section:localization}

We are then able to complete the proof of the localization and delocalization estimate for the random surface stated in Theorem~\ref{main.theorem} by combining Proposition~\ref{cor3.14} and the Helffer-Sj\"{o}strand representation formula.

\begin{proof}[Proof of Theorem~\ref{main.theorem}]
By the Helffer-Sj\"{o}strand representation formula, we have the identity
\begin{equation*}
    \var_{\mathbb{T}_L} \left[\phi(0) \right]  = \E \left[ \int_0^\infty  P_\a \left( t , 0 \right) \, dt \right].
\end{equation*}
Applying Proposition~\ref{cor3.14} and using the inequality $\exp (-t) \leq 1/t$ for $t > 0$, we see that
\begin{align*}
    \int_0^\infty  P_\a \left( t , 0 \right) \, dt & 
     \leq \int_0^\infty \frac{C \sqrt{\mathscr{M} \mathscr{M}^{(\frac{t}{2})}}}{(1+t)^{\frac d2}} \exp \left( - \frac{t}{C \mathscr{M}' L^2} \right) \, dt \\
    & \leq  \int_0^{L^2} \frac{C \sqrt{\mathscr{M} \mathscr{M}^{(\frac{t}{2})}}}{(1+t)^{\frac d2}} dt + \int_{L^2}^\infty \frac{C \sqrt{\mathscr{M} \mathscr{M}^{(\frac{t}{2})}}}{(1+t)^{\frac d2}} \frac{\mathscr{M}' L^2}{t} \, dt.
\end{align*}
Taking the expectation in the previous inequality, and using that all the moments of the random variables $\mathscr{M}$, $\mathscr{M}^{(t)}$ and $\mathscr{M}' $ are finite (in particular the random variables $\sqrt{\mathscr{M} \mathscr{M}^{(\frac{t}{2})}}$ and $\sqrt{\mathscr{M} \mathscr{M}^{(\frac{t}{2})}} \mathscr{M}'$ have a finite expectation  whose value can be bounded uniformly in $t$) completes the proof of Theorem~\ref{main.theorem}.
\end{proof}

{\small
\bibliographystyle{abbrv}
\bibliography{degenerategradphi.bib}
}

\end{document}